\def\@abssec#1{\vspace{.05in}\footnotesize \parindent .2in
{\bf #1. }\ignorespaces}
\newtheorem{theorem}{Theorem}[section]
\newtheorem{lemma}[theorem]{Lemma}
\newtheorem{proposition}[theorem]{Proposition}
\newtheorem{corollary}[theorem]{Corollary}
\newtheorem{remark}[theorem]{Remark}
\allowdisplaybreaks \numberwithin{equation}{section}
\title[Global regularity for dispersive dissipative QG equation]
{Global regularity for the supercritical dissipative quasi-geostrophic equation with large dispersive forcing}
\author{Marco Cannone}
\thanks{Universit\'e Paris-Est, Laboratorie d'Analyse et de Math\'ematiques Appliqu\'ees, UMR 8050 CNRS, 
5 boulevard Descartes,  Cit\'e Descartes Champs-sur-Marne, 77454 Marne-la-Vall\'ee, Cedex 2, 
France. 
E-mail:marco.cannone@math.univ-mlv.fr}
\author{Changxing Miao}
\thanks{Institute of Applied Physics and Computational Mathematics, P.O. Box 8009,
            Beijing 100088, P.R. China. Email: miao\_{}changxing@iapcm.ac.cn.}
\author{Liutang Xue}
\thanks{The Graduate School of China Academy of Engineering Physics, P.O. Box 2101, Beijing 100088, P.R. China. Email: xue\_{}lt@163.com.}
\begin{document}


\begin{abstract}
  We consider the 2D quasi-geostrophic equation with supercritical dissipation and dispersive forcing in the whole space.
When the dispersive amplitude parameter is large enough, we prove the global well-posedness of strong solution
to the equation with large initial data.
We also show the strong convergence result as the amplitude parameter goes to $\infty$.
Both results rely on the Strichartz-type estimates for the corresponding linear equation.
\end{abstract}

\noindent {\bf MSC(2000):}\quad 35Q35, 76U05, 76B03.  \\
\noindent {\bf Keywords:}\quad   supercritical quasi-geostrophic equation, dispersive effect,
Strichartz-type estimate,  global well-posedness.

\maketitle

\section{Introduction}

In this paper we consider the following 2D whole-space supercritical dissipative quasi-geostrophic (QG) equation with a dispersive forcing term
\begin{equation}\label{eq 1}
\begin{cases}
  \partial_t \theta + u \cdot\nabla \theta + \nu |D|^\alpha \theta
  + A u_2 =0,   \\
  u =(u_1,u_2)= \mathcal{R}^\perp\theta = (-\mathcal{R}_2\theta,\mathcal{R}_1\theta), \\
  \theta|_{t=0}(x)= \theta_0(x),
\end{cases}
\end{equation}
where $x\in \mathbb{R}^2$, $\alpha\in ]0,1[$, $\nu>0$, $A>0$, $\mathcal{R}_i= -\partial_i |D|^{-1}$ ($i=1,2$) is the usual Riesz transform, and the fractional differential operator $|D|^\alpha$ is defined via the Fourier transform
\begin{equation*}
  \widehat{|D|^\alpha f}(\xi)= |\xi|^{\alpha} \widehat{f}(\xi).
\end{equation*}
Here $\theta$ is a real-valued scalar function that can be interpreted as a buoyancy field,
$A$ is the amplitude parameter. 
This equation \eqref{eq 1} is a simplified model from the geostrophic fluid dynamics and
describes the evolution of a surface buoyancy in the presence of an environmental horizontal buoyancy gradient (cf. \cite{HeldPGS}).
From the physical viewpoint, the background buoyancy gradient generates dispersive waves, and thus the equation \eqref{eq 1}
provides a model for the interaction between waves and turbulent motions in the 2D framework.

When $A=0$, the equation \eqref{eq 1} reduces to the known 2D dissipative quasi-geostrophic equation,
which also arises from the geostrophic fluid dynamics (cf. \cite{HeldPGS,CMT}) and recently has attracted intense attention of many mathematicians
(cf. \cite{CMT,Res,AC-DC,KisNV,CV,Ju05,ChenMZ,Dong2,ConW,Dab} and references therein).
According to the scaling transformation and the $L^\infty$ maximum principle (cf. \cite{AC-DC}), the cases $\alpha>1$, $\alpha=1$ and $\alpha<1$
are referred to as subcritical, critical and supercritical cases respectively.
Up to now, the subcritical and critical cases have been intensively studied. For the delicate critical case,
the issue of global regularity was independently solved by \cite{KisNV} and \cite{CV}.
Kiselev et al in \cite{KisNV} proved the global well-posedness for the periodic smooth data by developing a new method
called the nonlocal maximum principle method. Almost at the same time and from a totally different direction,
Caffarelli and Vasseur in \cite{CV}
established the global regularity of weak solutions by deeply exploiting the DeGiorgi's iteration method.
However, in the supercritical case whether solutions remain globally regular or not is a remarkable open problem.
There are only some partial results, for instance: local well-posedness for large initial data
and global well-posedness for small initial data concerning strong solutions (e.g. \cite{Ju05,ChenMZ,Dong2,HmiK}) and
the eventual regularity of the global weak solutions (e.g. \cite{Dab,Kis}).

The equation \eqref{eq 1} is analogous to the 3D Navier-Stokes equation with Coriolis forcing,
which is a basic model of oceanography and meteorology dealing with large-scale phenomena (cf. \cite{CDGG06}),
\begin{equation}\label{eq NSC}
\begin{cases}
  \partial_t u + u\cdot\nabla u - \nu \Delta u + \nabla P +\frac{1}{\epsilon}e_3\times u =0, \\
  \mathrm{div}\, u =0,\qquad u|_{t=0}= u_0,
\end{cases}
\end{equation}
where $e_3=(0,0,1)$, $\epsilon$ denotes the Rossby number and $u=(u_1,u_2,u_3)$ is the unknown. So far, it is known that
global well-posedness of strong solutions to the 3D Navier-Stokes equations only holds for small initial data. But in the case of the
3D Navier-Stokes-Coriolis system \eqref{eq NSC}, when $\epsilon$ is small enough,
the presence of fast rotating term produces a stabilization effect
and ensures the global well-posedness of strong solution with large initial data.
This result was shown by Babin et al \cite{BMN96,BMN99}
and also by Gallagher \cite{Gal98} in the case of non-resonant periodic domains. For the case of whole space, it was proved
by Chemin et al \cite{CDGG00} through establishing the Strichartz-type estimates of the corresponding linearized system.
By modifying the method of \cite{CDGG00}, Ngo in \cite{Ngo09} moreover studied the case of small viscosity,
i.e., $\nu=\epsilon^\beta$ with $\beta\in ]0,\beta_0]$ and some $\beta_0>0$,
and proved the global existence of strong solution as $\epsilon$ small enough.
The asymptotic behavior of weak solutions in the weak or strong sense as $\epsilon$ goes to $0$ was also considered by
Chemin, Gallagher and their collaborators (cf. \cite{GalSR07,CDGG06}), and they showed that the limiting equation
in the whole space (in general) is the 2D Navier-Stokes equations with the velocity field $\bar u$ three components (not two)
\begin{equation}\label{eq NS2D}
\begin{cases}
  \partial_t \bar u + \bar u_h\cdot\nabla^h\bar u-\nu \Delta_h \bar u + (\nabla^h P,0)=0, \\
  \mathrm{div}\, \bar u_h=0,\qquad \bar u|_{t=0} =\bar u_0,
\end{cases}
\end{equation}
where $\bar u_h\triangleq (\bar u_1,\bar u_2)$, $\Delta_h\triangleq \partial_1^2+\partial_2^2$ and $\nabla^h\triangleq(\partial_1,\partial_2)$.

For the dispersive dissipative QG equation \eqref{eq 1}, Kiselev and Nazarov in \cite{KisN} considered
the critical case of $A>0$ and $\alpha=1$, and by applying the nonlocal maximum principle method they proved the existence of global and regular 
solutions. Note that in their proof, the dispersive term always plays a negative role.

In this paper we mainly focus on the dispersive dissipative QG equation \eqref{eq 1}
in the case of the supercritical regime $\alpha<1$ and $A$ large enough.
Motivated by the results of the 3D Navier-Stokes-Coriolis equations,
we shall develop the Strichartz-type estimate of the corresponding 2D linear equation to prove the global well-posedness of
strong solution to \eqref{eq 1} with large initial data. We shall also show a strong convergence result.

Before stating our main results, we first give some classical uniform existence results.
\begin{proposition}\label{prop uni-exi}
  Let $\theta_0\in L^2(\mathbb{R}^2)$ be a 2D real-valued scalar function. Then there exists a global weak solution $\theta$
(in the sense of distributions) to the dispersive dissipative quasi-geostrophic equation \eqref{eq 1},
which also satisfies the following energy estimate, uniformly in $A$,
\begin{equation*}
  \|\theta(t)\|_{L^2}^2 + 2 \nu \int_0^t
  \||D|^{\frac{\alpha}{2}}\theta(\tau)\|_{L^2}^2\mathrm{d}\tau
  \leq \|\theta_0\|_{L^2}^2, \quad \forall t>0.
\end{equation*}

Moreover if $\theta_0\in H^{2-\alpha}(\mathbb{R}^2)$, then there is a time $T>0$ independent of $A$ such that
$$
\theta\in \mathcal{C}([0,T]; H^{2-\alpha})\cap L^2([0,T]; H^{2-\alpha/2})
$$
with the norm independent of $A$, and all solutions to \eqref{eq 1} coincide with $\theta$ on $[0,T]$. In particular,
an absolute constant $c>0$ can be chosen such that if $\|\theta_0\|_{H^{2-\alpha}}\leq c \nu$, then the solution becomes
global in time.
\end{proposition}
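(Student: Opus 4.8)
The plan is to treat the dispersive term $Au_2 = A\mathcal{R}_1\theta$ as a skew-symmetric perturbation that is invisible to every $L^2$-based energy estimate, so that all a priori bounds coincide with those of the $A=0$ quasi-geostrophic equation and are therefore automatically uniform in $A$. The starting point is the elementary observation that, for real-valued $f$, the operator $\mathcal{R}_1=-\partial_1|D|^{-1}$ has the purely imaginary, odd Fourier symbol $-i\xi_1/|\xi|$, whence $\langle \mathcal{R}_1 f, f\rangle_{L^2}=0$; since $\mathcal{R}_1$ commutes with any Fourier multiplier, one also has $\langle |D|^s\mathcal{R}_1\theta,\, |D|^s\theta\rangle=0$ for every $s$. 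Combined with the divergence-free cancellation $\langle u\cdot\nabla g,\, g\rangle=0$ (valid because $\operatorname{div}u=\operatorname{div}\mathcal{R}^\perp\theta=0$), this makes both the forcing and the top-order transport term drop out of the relevant energy identities. For the first assertion I would build weak solutions by a standard regularization (Friedrichs mollification of the transport term, or a vanishing hyperviscosity, together with Galerkin truncation), for which pairing the equation with $\theta$ and using these two cancellations yields exactly
\[
  \tfrac12\tfrac{d}{dt}\|\theta\|_{L^2}^2 + \nu\||D|^{\alpha/2}\theta\|_{L^2}^2 = 0,
\]
uniformly in $A$. The resulting uniform $L^\infty_t L^2\cap L^2_t \dot H^{\alpha/2}$ bound, a bound on $\partial_t\theta$ in a negative-order space, and the Aubin--Lions lemma then give enough compactness to pass to the limit and recover the stated energy inequality.

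For the second assertion I would run the same energy method at the critical regularity $s=2-\alpha$, the scaling-invariant exponent for \eqref{eq 1} with $A=0$. Applying $|D|^{2-\alpha}$, pairing with $|D|^{2-\alpha}\theta$, and using that both $A\mathcal{R}_1$ and the leading transport term vanish, the only surviving contribution is the commutator
\[
  \big\langle [|D|^{2-\alpha},\, u\cdot\nabla]\theta,\, |D|^{2-\alpha}\theta\big\rangle,
\]
and the crux of the argument is the scaling-critical trilinear bound
\[
  \Big|\big\langle |D|^{2-\alpha}(u\cdot\nabla\theta),\, |D|^{2-\alpha}\theta\big\rangle\Big| \leq C\,\|\theta\|_{\dot H^{2-\alpha}}\,\|\theta\|_{\dot H^{2-\alpha/2}}^2,
\]
which I would establish by a Bony paraproduct decomposition together with the boundedness of $\mathcal{R}^\perp$ on homogeneous Sobolev spaces; one checks that its homogeneity matches exactly, realizing the full dissipative gain $\alpha/2$ with no slack. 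Adding the $L^2$ estimate produces a differential inequality of the form $\frac{d}{dt}\|\theta\|_{H^{2-\alpha}}^2 + \nu\|\theta\|_{H^{2-\alpha/2}}^2 \le C\|\theta\|_{H^{2-\alpha}}\|\theta\|_{H^{2-\alpha/2}}^2$ whose constants involve only $\nu$ and the data; a standard continuation argument on the regularized solutions then yields existence on an interval $[0,T]$ with a bound in $\mathcal{C}([0,T];H^{2-\alpha})\cap L^2([0,T];H^{2-\alpha/2})$ that, since $A$ never entered, is independent of $A$.

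Uniqueness and the coincidence with the weak solution would follow by estimating the difference $w=\theta-\tilde\theta$ in $L^2$: writing the nonlinear difference as $u\cdot\nabla w + (\mathcal{R}^\perp w)\cdot\nabla\tilde\theta$, the transport and forcing terms again cancel, and the remaining term is controlled by $\tfrac{\nu}{2}\|w\|_{\dot H^{\alpha/2}}^2$ plus a term $C\|\tilde\theta\|_{\dot H^{2-\alpha/2}}^2\|w\|_{L^2}^2$ whose coefficient is integrable in time thanks to the strong solution's $L^2_t\dot H^{2-\alpha/2}$ regularity; a Gr\"onwall argument then forces $w\equiv0$ on $[0,T]$, with all constants independent of $A$. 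Finally, the small-data global statement is immediate from the same differential inequality: if $\|\theta_0\|_{H^{2-\alpha}}\le c\nu$ with $c$ chosen so that $Cc\le \tfrac12$, then as long as the solution persists the right-hand side is absorbed by the dissipation, giving
\[
  \frac{d}{dt}\|\theta\|_{H^{2-\alpha}}^2 + \tfrac{\nu}{2}\|\theta\|_{H^{2-\alpha/2}}^2\le 0;
\]
hence $\|\theta(t)\|_{H^{2-\alpha}}$ is non-increasing, remains below $c\nu$, and the local solution extends to all times.

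I expect the principal obstacle to be the critical trilinear estimate, which is borderline in scaling and must be proved with frequency-localized care so as to realize the full $\alpha/2$ dissipative gain rather than losing a derivative. A secondary but essential technical point is to justify the two algebraic cancellations (skew-adjointness of $A\mathcal{R}_1$ and the divergence-free structure) at the level of the regularized approximations, so that the claimed uniformity in $A$ is genuine and not merely formal; in particular one must ensure that the regularization scheme preserves, or only negligibly perturbs, these identities as the regularization is removed.
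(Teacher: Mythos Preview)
Your proposal is correct and follows exactly the approach the paper takes: the paper's entire treatment of this proposition is the observation (Remark~\ref{rem prop1}) that $\langle |D|^s\mathcal{R}_1\theta,\,|D|^s\theta\rangle_{L^2}=0$ for all $s$, so the dispersive term vanishes from every $H^s$ energy identity and the proof becomes identical to the classical $A=0$ case, which the authors then omit with references to \cite{Res,Ju05,ChenMZ,Dong2}. You have spelled out precisely that reduction, together with the standard details the paper leaves to the literature.
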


\begin{remark}\label{rem prop1}
  Since for every $s\in\mathbb{R}$, $\overline{|D|^s\theta}=|D|^s\theta$ and
$|\widehat\theta(\xi)|^2=\widehat\theta(\xi)\widehat\theta(-\xi)$, we know that
\begin{equation}\label{eq fact1}
  \int_{\mathbb{R}^2}|D|^{s}\mathcal{R}_1\theta(x)\, |D|^s\theta(x)\mathrm{d}x=\langle|D|^s\mathcal{R}_1\theta,|D|^s\theta\rangle_{L^2}
  =-\int_{\mathbb{R}^2}i\xi_1|\xi|^{2s-1}|\widehat{\theta}(\xi)|^2\mathrm{d}\xi=0,
\end{equation}
thus the dispersive term does not contribute to the energy-type estimates. Therefore the proof of Proposition \ref{prop uni-exi}
is almost identical to the corresponding classical proof for the supercritical dissipative QG equation, and we  omit it here
(cf. \cite{Res,Ju05,ChenMZ,Dong2}).
\end{remark}

Now we consider the asymptotic behavior of the equation \eqref{eq 1} as $A$ tends to infinity. This is reasonable since all bounds
in the above statement are independent of $A$. In what follows we shall also denote by $\theta^A$ the solutions in Proposition
\ref{prop uni-exi} to emphasize the dependence of $A$. The convergence result is as follows.
\begin{theorem}\label{thm conver}
  Let $\theta_0(x)=\bar\theta_0(x_2) + \tilde\theta_0(x)$, with $\bar\theta_0\in H^{\frac{3}{2}-\alpha}(\mathbb{R})$
be a 1D real-valued scalar function and $\tilde\theta_0\in L^2(\mathbb{R}^2)$ be a 2D real-valued scalar function.
Assume that $\bar\theta(t,x_2)$ is the unique solution of the following linear equation
\begin{equation}\label{eq limit}
  \partial_t \bar\theta + \nu|D_2|^\alpha \bar\theta =0 ,\quad \bar\theta(0,x_2)= \bar\theta_0(x_2).
\end{equation}
Then there exists a global weak solution $\theta^A$ to the dispersive dissipative quasi-geostrophic equation \eqref{eq 1}.
Furthermore, for every $\sigma\in ]2,\frac{4}{2-\alpha}[$ and for every $T>0$, we have
\begin{equation}\label{eq conv}
  \lim_{A\rightarrow \infty}\int_0^T \|\theta^A(t) - \bar\theta(t)\|_{L^\sigma}^2\mathrm{d}t=0.
\end{equation}
\end{theorem}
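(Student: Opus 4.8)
The plan is to pass to the fluctuation $w^A:=\theta^A-\bar\theta$ and to prove that it tends to $0$ in $L^2([0,T];L^\sigma)$. The starting point is the observation that $\bar\theta$, depending only on $x_2$, is itself an exact solution of \eqref{eq 1}: since $\widehat{\bar\theta}$ is supported on $\{\xi_1=0\}$ we have $\mathcal{R}_1\bar\theta=0$, so the dispersive forcing $A\,\mathcal{R}_1\bar\theta$ vanishes; the velocity $\bar u=(-\mathcal{R}_2\bar\theta,0)$ is $x_1$-independent and divergence free, whence $\bar u\cdot\nabla\bar\theta=0$; and $|D|^\alpha\bar\theta=|D_2|^\alpha\bar\theta$. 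Subtracting \eqref{eq limit} from \eqref{eq 1} I obtain
\begin{equation*}
\partial_t w^A+\nu|D|^\alpha w^A+A\,\mathcal{R}_1 w^A=-\bar u\cdot\nabla w^A-(\mathcal{R}^\perp w^A)\cdot\nabla\bar\theta-(\mathcal{R}^\perp w^A)\cdot\nabla w^A,
\end{equation*}
with $w^A|_{t=0}=\tilde\theta_0\in L^2(\mathbb{R}^2)$. The decisive gain is that the fluctuation now has finite energy, so a perturbed version of Proposition \ref{prop uni-exi} is available for $w^A$.

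First I would derive $A$-uniform bounds on $w^A$. Testing the equation against $w^A$, the dispersive term drops by \eqref{eq fact1}, the term $\langle\bar u\cdot\nabla w^A,w^A\rangle$ vanishes since $\mathrm{div}\,\bar u=0$, and $\langle(\mathcal{R}^\perp w^A)\cdot\nabla w^A,w^A\rangle=0$ since $\mathcal{R}^\perp w^A$ is divergence free; using $\partial_1\bar\theta=0$ only the coupling term survives,
\begin{equation*}
\tfrac12\tfrac{d}{dt}\|w^A\|_{L^2}^2+\nu\||D|^{\alpha/2}w^A\|_{L^2}^2=-\int_{\mathbb{R}^2}\mathcal{R}_1 w^A\,\partial_2\bar\theta\,w^A\,\mathrm{d}x.
\end{equation*}
I would estimate the right-hand side by $\|\partial_2\bar\theta\|_{L^{2/\alpha}}\|w^A\|_{L^{4/(2-\alpha)}}^2\lesssim\|\partial_2\bar\theta\|_{L^{2/\alpha}}\||D|^{\alpha/2}w^A\|_{L^2}^2$, using H\"older, the $L^{4/(2-\alpha)}$-boundedness of $\mathcal{R}_1$ and the Sobolev embedding $\dot H^{\alpha/2}\hookrightarrow L^{4/(2-\alpha)}$. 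The crucial point is that the smoothing of the one-dimensional semigroup yields $\|\partial_2\bar\theta(t)\|_{L^{2/\alpha}}\lesssim t^{-1/2}\|\bar\theta_0\|_{\dot H^{3/2-\alpha}}$, an integrable-in-time weight --- this is precisely where the hypothesis $\bar\theta_0\in H^{3/2-\alpha}(\mathbb{R})$ enters. Absorbing the coupling into the dissipation and closing the resulting inequality (whose dissipation coefficient degenerates over a short initial layer) would give a bound, uniform in $A$, for $w^A$ in $L^\infty([0,T];L^2)\cap L^2([0,T];\dot H^{\alpha/2})$, hence in $L^2([0,T];L^{4/(2-\alpha)})$.

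With these bounds secured I would invoke the Strichartz estimate for the linear propagator $\mathcal{S}_A(t)$ associated with $\partial_t f+\nu|D|^\alpha f+A\,\mathcal{R}_1 f=0$. Writing $w^A(t)=\mathcal{S}_A(t)\tilde\theta_0+\int_0^t\mathcal{S}_A(t-s)N^A(s)\,\mathrm{d}s$ with $N^A$ the right-hand side above, the free term tends to $0$ in $L^2([0,T];L^{\sigma_0})$ at an admissible exponent $\sigma_0\in\,]2,\frac{4}{2-\alpha}[$, with a gain of a negative power of $A$, because $\tilde\theta_0\in L^2$ carries no Fourier mass on the degenerate set $\{\xi_1=0\}$. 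Granting the same gain for the Duhamel integral, I would obtain $\|w^A\|_{L^2([0,T];L^{\sigma_0})}\to0$; then, for any $\sigma\in\,]\sigma_0,\frac{4}{2-\alpha}[$, H\"older in time and interpolation in space between $L^{\sigma_0}$ and $L^{4/(2-\alpha)}$ give
\begin{equation*}
\|w^A\|_{L^2([0,T];L^\sigma)}\le\|w^A\|_{L^2([0,T];L^{\sigma_0})}^{1-\lambda}\,\|w^A\|_{L^2([0,T];L^{4/(2-\alpha)})}^{\lambda}\longrightarrow0,
\end{equation*}
and letting $\sigma_0\downarrow2$ covers the whole range, which is \eqref{eq conv}.

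The hard part will be propagating the dispersive smallness through the nonlinear Duhamel term. Since the problem is supercritical, the self-interaction $(\mathcal{R}^\perp w^A)\cdot\nabla w^A=\mathrm{div}(\mathcal{R}^\perp w^A\,w^A)$ is controlled only at the energy level, so it cannot be fed naively into an inhomogeneous Strichartz estimate while retaining the $A\to\infty$ gain; I expect to need a retarded Strichartz estimate adapted to divergence-form forcing, with the quadratic term $\mathcal{R}^\perp w^A\,w^A$ placed in a space-time norm supplied by the uniform $L^\infty_TL^2\cap L^2_TL^{4/(2-\alpha)}$ bounds. A companion difficulty is the frequency region near $\{\xi_1=0\}$, where the dispersion degenerates: there one cannot gain from oscillation and must instead exploit the smallness of the $L^2$-mass of $\tilde\theta_0$ on thin neighbourhoods of that axis, combining this with the oscillatory gain on the complementary region.
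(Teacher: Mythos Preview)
Your overall architecture matches the paper's: pass to $w^A=\theta^A-\bar\theta$, obtain $A$-uniform energy bounds, then exploit the Strichartz-type estimate of Proposition~\ref{prop Strich}. Two concrete gaps remain.

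\textbf{The energy step does not close as written.} Your H\"older splitting produces $\|\partial_2\bar\theta(t)\|_{L^{2/\alpha}}\,\||D|^{\alpha/2}w^A\|_{L^2}^2$ on the right, and since $\|\partial_2\bar\theta(t)\|_{L^{2/\alpha}}\sim t^{-1/2}$ blows up at $t=0$, this term cannot be absorbed into the dissipation: the effective coefficient $\nu-Ct^{-1/2}$ is not merely ``degenerate'' but unbounded below on the initial layer, and you have no a~priori control of $\||D|^{\alpha/2}w^A\|_{L^2}$ there. The paper's remedy is to keep one factor of $w^A$ in $L^2$: via an anisotropic H\"older/Sobolev in $x_2$ one gets
\[
\bigl|\langle (\mathcal{R}_1 w^A)\,\partial_2\bar\theta,\,w^A\rangle\bigr|\ \lesssim\ \||D|^{\alpha/2}w^A\|_{L^2}\,\|\partial_2\bar\theta\|_{L^{2/\alpha}_{x_2}}\,\|w^A\|_{L^2},
\]
and after Young's inequality the weight $\|\partial_2\bar\theta\|_{L^{2/\alpha}}^2$ multiplies $\|w^A\|_{L^2}^2$; Gronwall then closes using $\|\partial_2\bar\theta\|_{L^2_tL^{2/\alpha}}^2\lesssim\nu^{-1}\|\bar\theta_0\|_{H^{3/2-\alpha}}^2$, which is exactly where the regularity hypothesis on $\bar\theta_0$ is spent.

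\textbf{The Strichartz step is vague precisely where the paper is sharp.} The estimate \eqref{eq Stri} is only available on $\mathcal{B}_{r,R}=\{|\xi_1|\geq r,\ |\xi|\leq R\}$, so the paper first applies the cutoff $\mathcal{I}_{r,R}$. The decisive observation is that on this compact frequency set Bernstein converts the nonlinear forcing into $L^1_TL^2$ using only the energy bound:
\[
\|\mathcal{I}_{r,R}(\mathcal{R}^\perp w^A\cdot\nabla w^A)\|_{L^1_TL^2}\ \lesssim\ R^{2}\,T\,\|w^A\|_{L^\infty_TL^2}^2,
\]
so Minkowski plus \eqref{eq Stri} give $\|\mathcal{I}_{r,R}w^A\|_{L^2_TL^\sigma}\lesssim_{r,R,T} A^{-\frac{1}{16}(1-\frac{2}{\sigma})}$. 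No ``retarded Strichartz adapted to divergence-form forcing'' is needed. The complement of $\mathcal{I}_{r,R}$ is handled directly from the uniform bounds: the high-frequency piece $(\mathrm{Id}-\chi(|D|/R))w^A$ is small in $L^2_TL^\sigma$ by a factor $R^{-(\frac{2}{\sigma}-\frac{2-\alpha}{2})}$ via the $L^2_T\dot H^{\alpha/2}$ estimate and Sobolev, and the low-$\xi_1$ piece $\chi(|D_1|/r)\chi(|D|/R)w^A$ is small by a factor $r^{\frac12-\frac1\sigma}$ via Bernstein in $x_1$. One then sends $A\to\infty$, $r\to0$, $R\to\infty$ in that order. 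In particular, there is no need to invoke any special ``smallness of the $L^2$-mass of $\tilde\theta_0$ near $\{\xi_1=0\}$'', which is not available in general.
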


Next we consider the strong solutions, and we prove the following global result.
\begin{theorem}\label{thm stab}
  Let $\theta_0(x)\in H^{2-\alpha}(\mathbb{R}^2)$ be a 2D real-valued scalar
function, then there exists a positive number $A_0$ such that for every $A\geq A_0$,
the dispersive dissipative quasi-geostrophic equation \eqref{eq 1} has a unique global solution $\theta^A$ satisfying
$\theta^A\in \mathcal{C}(\mathbb{R}^+; H^{2-\alpha}(\mathbb{R}^2))\cap L^2(\mathbb{R}^+; \dot H^{2-\alpha/2}(\mathbb{R}^2))$.
Moreover, if we denote by $\tilde\theta^A$ the solution of the following linear dispersive dissipative equation
\begin{equation}\label{eq linDisp}
  \partial_t \tilde\theta^A + \nu |D|^\alpha \tilde\theta^A + A\, \mathcal{R}_1 \tilde\theta^A =0,\quad
  \tilde\theta^A|_{t=0}=\theta_0,
\end{equation}
then as $A$ goes to infinity,
\begin{equation}\label{eq conv2}
  \theta^A -\tilde\theta^A\rightarrow 0, \quad \textrm{in} \quad  L^\infty(\mathbb{R}^+; H^{2-\alpha}(\mathbb{R}^2))
  \cap L^2(\mathbb{R}^+; \dot H^{2-\alpha/2}(\mathbb{R}^2)).
\end{equation}
\end{theorem}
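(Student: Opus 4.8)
The plan is to adapt the strategy of Chemin, Desjardins, Gallagher and Grenier \cite{CDGG00} for the rotating Navier--Stokes system: solve the linear dispersive equation \eqref{eq linDisp} exactly, and treat the genuinely nonlinear part by an energy method in which every forcing term carries a factor of the fast-dispersing linear profile and is therefore small for large $A$. Writing $\theta^A=\tilde\theta^A+w^A$ with $\tilde\theta^A$ the solution of \eqref{eq linDisp}, and using $u_2=\mathcal R_1\theta^A$ to recast \eqref{eq 1} as $\partial_t\theta^A+\nu|D|^\alpha\theta^A+A\mathcal R_1\theta^A=-u\cdot\nabla\theta^A$, subtraction gives, with $v\triangleq\mathcal R^\perp w^A$ and $\tilde u\triangleq\mathcal R^\perp\tilde\theta^A$,
\begin{equation*}
\partial_t w^A+\nu|D|^\alpha w^A+A\mathcal R_1 w^A+v\cdot\nabla w^A
=-\tilde u\cdot\nabla w^A-v\cdot\nabla\tilde\theta^A-\tilde u\cdot\nabla\tilde\theta^A,
\qquad w^A|_{t=0}=0.
\end{equation*}
The point is that each of the three forcing terms on the right is at least linear in the linear profile $\tilde\theta^A$, whose space--time norms decay as $A\to\infty$.

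I would then close an energy estimate for $w^A$ in the scaling-critical space $\dot H^{2-\alpha}$, the low frequencies being harmless since $\|w^A\|_{L^2}\le\|\theta^A\|_{L^2}+\|\tilde\theta^A\|_{L^2}\le 2\|\theta_0\|_{L^2}$ by Proposition \ref{prop uni-exi}. Pairing $|D|^{2-\alpha}w^A$ with the equation, the dispersive term $A\mathcal R_1 w^A$ disappears by the orthogonality \eqref{eq fact1}, while the dissipation furnishes the coercive quantity $\nu\|w^A\|_{\dot H^{2-\alpha/2}}^2$. Since $v$ and $\tilde u$ are divergence free, the three transport terms become commutators that I would bound by Bony paraproduct and product laws, balancing regularity between $\dot H^{2-\alpha}$ and the dissipation space $\dot H^{2-\alpha/2}$ (the dual exponent of the nonlinearity being $\dot H^{2-3\alpha/2}$). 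This yields, schematically,
\begin{equation*}
\tfrac12\tfrac{d}{dt}\|w^A\|_{\dot H^{2-\alpha}}^2+\nu\|w^A\|_{\dot H^{2-\alpha/2}}^2
\le C\|w^A\|_{\dot H^{2-\alpha}}\|w^A\|_{\dot H^{2-\alpha/2}}^2
+C\|\tilde\theta^A\|_{Z}\|w^A\|_{\dot H^{2-\alpha}}\|w^A\|_{\dot H^{2-\alpha/2}}
+C\|\tilde u\cdot\nabla\tilde\theta^A\|_{\dot H^{2-3\alpha/2}}\|w^A\|_{\dot H^{2-\alpha/2}},
\end{equation*}
where $Z$ is a suitable Strichartz-admissible spatial norm. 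The cubic self-interaction is absorbed into the dissipation as soon as $\|w^A\|_{\dot H^{2-\alpha}}$ stays below a threshold $c_0\nu$; the remaining two terms are driven entirely by $\tilde\theta^A$.

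To control those driving terms I would integrate in time and invoke the Strichartz-type estimates for \eqref{eq linDisp}, which provide a spatial norm $Z$ and an exponent $\delta>0$ with $\|\tilde\theta^A\|_{L^2(\mathbb R^+;Z)}\le C A^{-\delta}\|\theta_0\|_{H^{2-\alpha}}$ and, likewise, $\|\tilde u\cdot\nabla\tilde\theta^A\|_{L^2(\mathbb R^+;\dot H^{2-3\alpha/2})}\to0$ as $A\to\infty$. Then Cauchy--Schwarz in time and Young's inequality turn the cross term into $C A^{-\delta}\|w^A\|_{L^\infty_T\dot H^{2-\alpha}}\|w^A\|_{L^2_T\dot H^{2-\alpha/2}}$ and the source into $\tfrac\nu8\|w^A\|_{\dot H^{2-\alpha/2}}^2+o_A(1)$. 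Setting $\Theta(T)\triangleq\|w^A\|_{L^\infty_T\dot H^{2-\alpha}}^2+\nu\|w^A\|_{L^2_T\dot H^{2-\alpha/2}}^2$, these bounds show that if $\Theta(T)\le(c_0\nu)^2$ then in fact $\Theta(T)\le\tfrac12(c_0\nu)^2$ once $A\ge A_0$. Since $\Theta(0)=0$ and $\Theta$ is continuous, a standard continuity argument gives $\Theta(T)\le\tfrac12(c_0\nu)^2$ for all $T$, hence a uniform-in-time bound that, together with the local theory of Proposition \ref{prop uni-exi}, extends $\theta^A=\tilde\theta^A+w^A$ to a global solution in $\mathcal C(\mathbb R^+;H^{2-\alpha})\cap L^2(\mathbb R^+;\dot H^{2-\alpha/2})$. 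Because the driving terms are $o_A(1)$, one actually gets $\Theta(\infty)\to0$, which is precisely the convergence \eqref{eq conv2}; uniqueness follows from a routine $L^2$ estimate for the difference of two solutions.

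The main obstacle is twofold and concentrated in the interplay between the two estimates above. First, the energy estimate lives in the critical space $\dot H^{2-\alpha}$ of a \emph{supercritical} equation, so the nonlinear commutators close only by exploiting the gain of $\alpha/2$ derivatives from the dissipation; the paraproduct bookkeeping is borderline and must be carried out block by block in Littlewood--Paley. Second, and more seriously, the dispersion here is anisotropic and degenerate: the phase $\xi_1/|\xi|$ is homogeneous of degree zero, flat in the radial direction and singular at the origin, so the associated Strichartz estimates are weaker than in the classical dispersive setting and require frequency localization. The real difficulty is to ensure that the admissible exponents actually produced by these degenerate Strichartz estimates are compatible with the regularity demanded by the nonlinear product laws, i.e.\ that the same norm $Z$ can simultaneously carry an $A^{-\delta}$ gain and sit at the level needed to absorb the transport terms into the dissipation.
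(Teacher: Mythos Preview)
Your overall architecture---subtract the linear profile, run an energy estimate on the remainder in $H^{2-\alpha}$, absorb the cubic self-interaction by a continuity argument, and drive the cross terms to zero via Strichartz---is exactly the paper's strategy, and your identification of the two obstacles is accurate. But there is a genuine gap precisely at the point you flag as the ``main obstacle,'' and your scheme as written does not close.

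The Strichartz estimate for $\mathcal G^A(t)$ (Proposition~\ref{prop Strich}) is only available for data whose Fourier transform is supported in $\mathcal B_{r,R}=\{|\xi_1|\ge r,\ |\xi|\le R\}$, and the constant depends on $r,R$. There is \emph{no} global Strichartz bound of the form $\|\tilde\theta^A\|_{L^2_tZ}\le CA^{-\delta}\|\theta_0\|_{H^{2-\alpha}}$ for unlocalized $\theta_0$: the phase $\xi_1/|\xi|$ degenerates on $\{\xi_1=0\}$, and near that set there is no dispersion at all. Hence the forcing terms $\tilde u\cdot\nabla w^A$, $v\cdot\nabla\tilde\theta^A$, $\tilde u\cdot\nabla\tilde\theta^A$ in your equation for $w^A$ cannot be shown to be $o_A(1)$ as stated. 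Your continuity argument relies on $w^A(0)=0$ \emph{and} on smallness of the sources; once you localize $\tilde\theta^A$ to make the Strichartz estimate applicable, you lose $w^A(0)=0$, and you have not explained how to recover.

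The paper's fix is to work not with $\tilde\theta^A$ but with its frequency-truncated part $\tilde\theta^A_m\triangleq\mathcal I_{r,R}\tilde\theta^A$, and to set $\eta^A\triangleq\theta^A-\tilde\theta^A_m$. Then $\eta^A(0)=(\mathrm{Id}-\mathcal I_{r,R})\theta_0$, which is \emph{not} zero but can be made as small as desired in $H^{2-\alpha}$ by first choosing $r$ small and $R$ large (dominated convergence). The Strichartz bound now applies to $\tilde\theta^A_m$ with a constant $C_{r,R}$; once $r,R$ are frozen, one chooses $A$ large to beat that constant. The continuity argument then runs from a small (not zero) initial value to a uniform bound, and letting $\epsilon\to0$, $r\to0$, $R\to\infty$, $A\to\infty$ in the right order gives \eqref{eq conv2}. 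This two-parameter decoupling ($r,R$ for the initial layer, then $A$ for the dispersive source) is the missing ingredient in your proposal. A secondary point: because $\tilde\theta^A_m$ is band-limited, the product estimates for the forcing (Lemma~\ref{lem product}) pick up explicit powers of $R$, which is harmless since $R$ is fixed before $A$ is chosen; this also resolves the ``compatibility of exponents'' worry you raise.
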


The proofs of both Theorem \ref{thm conver} and Theorem \ref{thm stab} are strongly based on the Strichartz-type estimate
for the corresponding linear equation \eqref{eq linDisp}, which is the target of the whole section \ref{sec Stri}.
The Fourier localization method and the para-differential calculus are also heavily used in the proof of Theorem \ref{thm stab},
and for clarity we place some needed commutator estimates and product estimates in the Appendix section.
The proofs of Theorem \ref{thm conver} and Theorem \ref{thm stab} are settled in Section \ref{sec conv} and Section \ref{sec stab} respectively.

\begin{remark}\label{rem main1}
  The Strichartz-type estimate for the corresponding linear equation depends on the basic dispersive estimate,
which is stated in Lemma \ref{lem disp} below. Compared with the dispersive estimate in the case of the 3D Navier-Stokes-Coriolis equations
(cf. Lemma 5.2 in \cite{CDGG06}), Lemma \ref{lem disp} is much more delicate and the value (precisely, the argument)
of $z$ is more involved in the proof. The main reason is that the equation considered here is two-dimensional,
and the lower dimension makes it harder to to develop the expected dispersive estimate.
This can be further justified if we try to derive the dispersive estimate of the
"anisotropic" kernel function,
i.e., the kernel function as follows
\begin{equation}\label{eq H}
  H(t, \mu,z_2,\xi_1 )\triangleq \int_{\mathbb{R}}\Psi(\xi) e^{i\mu \frac{\xi_1}{|\xi|}+ i z_2 \xi_2 -\nu t |\xi|^\alpha  }\mathrm{d}\xi_2,
\end{equation}
with $z_2\in\mathbb{R}$, $\mu>0$ and $\Psi$ defined by \eqref{eq Psi}, and we find that it is rather difficult
to obtain the needed dispersive estimate.
Notice that the suitable dispersive estimate for \eqref{eq H} will essentially be used if one treats the general data
$\theta_0(x)=\bar\theta_0(x_2)+\tilde\theta_0(x)$ in Theorem \ref{thm stab}.
\end{remark}

\begin{remark}\label{rem main2}
  It is interesting to note that the limiting equation \eqref{eq limit} is analogous to the 2D Navier-Stokes equation \eqref{eq NS2D},
and one can expect that the equation will play a similar role in other situations.
\end{remark}

\section{Preliminaries}\label{sec PRE}
\setcounter{section}{2}\setcounter{equation}{0}

In this preparatory section, we introduce some notations and present the definitions and some
related results of the Sobolev spaces and Besov spaces.

Some notations used in this paper are listed as follows.
\\
$\diamond$ Throughout this paper, $C$ stands for a constant which may be different from line to line. We
sometimes use $A\lesssim B$ instead of $A\leq C B$, and use $A\lesssim_{\beta,\gamma\cdots}B$ instead of
$A\leq C(\beta,\gamma,\cdots)B$, with $C(\beta,\gamma,\cdots)$ a constant depending on $\beta,\gamma,\cdots$.
\\
$\diamond$ Denote by $\mathcal{D}(\mathbb{R}^n)$ the space of test functions which are smooth functions with compact support,
$\mathcal{S}(\mathbb{R}^n)$ the Schwartz space of rapidly
decreasing smooth functions, $\mathcal{S}'(\mathbb{R}^n)$ the
space of tempered distributions,
$\mathcal{S}'(\mathbb{R}^n)/\mathcal{P}(\mathbb{R}^n)$ the
quotient space of tempered distributions up to polynomials.
\\
$\diamond$
$\mathcal{F}f$ or $\widehat{f}$ denotes the Fourier transform, that is
$\mathcal{F}f(\xi)=\widehat{f}(\xi)=\int_{\mathbb{R}^n}e^{-ix\cdot\xi}f(x)\textrm{d} x,$
while $\mathcal{F}^{-1}f$ the inverse Fourier transform, namely,
$\mathcal{F}^{-1}f(x)=(2\pi)^{-n}\int_{\mathbb{R}^n}e^{ix\cdot\xi}f(\xi)\textrm{d}
\xi$ (if there is no ambiguity, we sometimes omit $(2\pi)^{-n}$ for brevity).
\\
$\diamond$ Denote by $\langle f,g\rangle_{L^2}\triangleq \int_{\mathbb{R}^n} f(x) \overline{g}(x)\mathrm{d}x$ the inner product
of the Hilbert space $L^2(\mathbb{R}^n)$.
\\
$\diamond$ Denote by $B(x,r)$ the ball in $\mathbb{R}^n$ centered at $x$ with radius $r$.

Now we give the definition of ($L^2$-based) Sobolev space. For $s\in\mathbb{R}$, the inhomogeneous Sobolev space
\begin{equation*}
  H^{s}\triangleq\Big\{f\in \mathcal{S}'(\mathbb{R}^n);
  \|f\|^2_{H^s}\triangleq \int_{\mathbb{R}^n}(1+|\xi|^2)^s|\widehat{f}(\xi)|^2\textrm{d}
  \xi<\infty\Big\}.
\end{equation*}

Also one can define the corresponding homogeneous space:
\begin{equation*}
  \dot{H}^s\triangleq \Big\{f\in
  \mathcal{S}'(\mathbb{R}^n)/\mathcal{P}(\mathbb{R}^n);
  \|f\|^2_{\dot{H}^s}\triangleq \int_{\mathbb{R}^n}|\xi|^{2s}|\widehat{f}(\xi)|^2\textrm{d} \xi<\infty\Big\}
\end{equation*}

In order to define the Besov spaces, we need the following dyadic partition of unity (cf. \cite{BCD11}).
Choose two nonnegative radial functions $\zeta$, $\psi\in \mathcal{D}(\mathbb{R}^n)$ be
supported respectively in the ball $\{\xi\in \mathbb{R}^n:|\xi|\leq \frac{4}{3} \}$ and the shell $\{\xi\in
\mathbb{R}^n: \frac{3}{4}\leq |\xi|\leq  \frac{8}{3} \}$ such that
\begin{equation*}
  \zeta(\xi)+\sum_{j\geq 0}\psi(2^{-j}\xi)=1, \quad
  \forall\xi\in\mathbb{R}^n; \qquad
  \sum_{j\in \mathbb{Z}}\psi(2^{-j}\xi)=1, \quad \forall\xi\neq 0.
\end{equation*}
For all $f\in\mathcal{S}'(\mathbb{R}^n)$, we define the nonhomogeneous Littlewood-Paley operators
\begin{equation*}
  \Delta_{-1}f \triangleq \zeta(D)f; \qquad
  \Delta_j f \triangleq \psi(2^{-j}D)f,\; S_j f \triangleq\sum_{-1\leq k\leq j-1}\Delta_k f,\quad \forall j\in\mathbb{N},
\end{equation*}
And the homogeneous Littlewood-Paley operators can be defined as follows
\begin{equation*}
  \dot{\Delta}_j f\triangleq \psi(2^{-j}D)f;\quad \dot S_j f\triangleq \sum_{k\in\mathbb{Z},k\leq j-1}\dot \Delta_k f,
  \quad \forall j\in\mathbb{Z}.\qquad
\end{equation*}

Then we introduce the definition of Besov spaces . Let $(p,r)\in[1,\infty]^2$, $s\in\mathbb{R}$, the nonhomogeneous Besov space
\begin{equation*}
  B^s_{p,r}\triangleq\Big\{f\in\mathcal{S}'(\mathbb{R}^n);\|f\|_{B^s_{p,r}}\triangleq\|\{2^{js}\|\Delta
  _j f\|_{L^p}\}_{j\geq -1}\|_{\ell^r}<\infty  \Big\}
\end{equation*}
and the homogeneous space
\begin{equation*}
  \dot{B}^s_{p,r}\triangleq\Big\{f\in\mathcal{S}'(\mathbb{R}^n)/\mathcal{P}(\mathbb{R}^n);
  \|f\|_{\dot{B}^s_{p,r}}\triangleq\|\{2^{js}\|\dot\Delta
  _j f\|_{L^p}\}_{j\in\mathbb{Z}}\|_{\ell^r(\mathbb{Z})}<\infty  \Big\}.
\end{equation*}
We point out that for all $s\in\mathbb{R}$, $B^s_{2,2}=H^s$ and $\dot{B}^s_{2,2}=\dot{H}^s$.

Next we introduce two kinds of space-time Besov spaces. The first one is the classical space-time Besov space
$L^\rho([0,T],B^s_{p,r})$, abbreviated by $L^\rho_{T}B^s_{p,r}$, which is the set of tempered distribution $f$ such that
\begin{equation*}
  \|f\|_{L^\rho_T B^s_{p,r}}\triangleq\|\|\{2^{js}\|\Delta_j f\|_{L^p}\}_{j\geq -1}\|_{\ell^r}\|_{L^\rho([0,T])}<\infty.
\end{equation*}
The second one is the Chemin-Lerner's mixed space-time Besov space $\widetilde{L}^\rho([0,T],B^s_{p,r})$,
abbreviated by $\widetilde{L}^\rho_T B^s_{p,r}$, which is the set of tempered distribution $f$ satisfying
\begin{equation*}
  \|f\|_{\widetilde L^\rho_T B^s_{p,r}}\triangleq \|\{2^{qs}\|\Delta_q f\|_{L^\rho_T L^p}
  \}_{q\geq -1}\|_{\ell^r}<\infty.
\end{equation*}
Due to Minkowiski's inequality, we immediately obtain
\begin{equation*}
 L^\rho_T B^s_{p,r}\hookrightarrow
 \widetilde{L}^\rho_T B^s_{p,r},\;  \mathrm{if}\; r\geq \rho \quad \mathrm{and} \quad
 \widetilde{L}^\rho_T B^s_{p,r}\hookrightarrow L^\rho_T B^s_{p,r},\;  \mathrm{if}\; \rho\geq r.
\end{equation*}
These can similarly extend to the homogeneous one $L^\rho_T\dot B^s_{p,r}$ and $\widetilde L^\rho_T\dot B^s_{p,r}$.

Bernstein's inequality is very fundmental in the analysis involving Besov spaces.
\begin{lemma}\label{lem Bern}
  Let $a, b,p,q$ be positive numbers satisfying $0< a\leq  b< \infty$ and $1\leq p\leq q\leq \infty$, $k\geq 0$, $\lambda>0$ and
$f\in L^p(\mathbb{R}^n)$ with $n\in\mathbb{Z}^+$. Then there exist absolute positive constants $C$ and $c$ such that
\begin{align*}
  \mathrm{if}\;\;\; \mathrm{supp}\widehat{f}\subset \lambda \mathcal{B}\triangleq \{\xi: |\xi|\leq \lambda b\}\Longrightarrow
  \||D|^k f\|_{L^q(\mathbb{R}^n)}\leq C\lambda^{k+n(\frac{1}{p}-\frac{1}{q})}\|f\|_{L^p(\mathbb{R}^n)},
\end{align*}
and
\begin{align*}
  \mathrm{if}\;\; \mathrm{supp}\widehat{f}\subset \lambda \mathcal{C}\triangleq \{\xi: a\lambda\leq |\xi|\leq b\lambda\}
  \Longrightarrow c \lambda^k\|f\|_{L^p(\mathbb{R}^n)}\leq \||D|^k f\|_{L^p(\mathbb{R}^n)}\leq C\lambda^k \|f\|_{L^p(\mathbb{R}^n)}.
\end{align*}
When $k\in\mathbb{N}$, similar estimates hold if $|D|^k$ is replaced by $\sup_{|\gamma|=k}\partial^{\gamma}$.
\end{lemma}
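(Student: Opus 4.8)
The plan is to represent both inequalities as convolutions of $f$ (respectively $|D|^kf$) against rescaled kernels, and then to read off the powers of $\lambda$ from the scaling of those kernels while controlling their norms by Young's convolution inequality.

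For the first (ball) estimate, fix a cutoff $\phi\in\mathcal{D}(\mathbb{R}^n)$ with $\phi\equiv 1$ on $\{|\xi|\le b\}$ and supported in a slightly larger ball. If $\mathrm{supp}\,\widehat f\subset\{|\xi|\le \lambda b\}$, then $\widehat{|D|^kf}(\xi)=|\xi|^k\phi(\xi/\lambda)\widehat f(\xi)$, so that $|D|^kf=g_\lambda * f$ with $g_\lambda=\mathcal{F}^{-1}\big(|\xi|^k\phi(\cdot/\lambda)\big)$. A change of variables gives $g_\lambda(x)=\lambda^{k+n}g_1(\lambda x)$, where $g_1=\mathcal{F}^{-1}(|\xi|^k\phi)$, hence $\|g_\lambda\|_{L^r}=\lambda^{k+n(1/p-1/q)}\|g_1\|_{L^r}$ once one checks $n-n/r=n(1/p-1/q)$ for the Young exponent $1/r=1+1/q-1/p$. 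The inequality $\|g_\lambda*f\|_{L^q}\le\|g_\lambda\|_{L^r}\|f\|_{L^p}$ then yields exactly the claimed bound, provided $\|g_1\|_{L^r}<\infty$.

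For the second (shell) estimate, the upper bound is the special case $q=p$ of the first part, since the shell is contained in a ball. For the lower bound I choose $\widetilde\phi\in\mathcal{D}(\mathbb{R}^n)$ supported in an annulus bounded away from the origin and equal to $1$ on $\{a\le|\xi|\le b\}$; then on $\mathrm{supp}\,\widehat f$ one has $\widehat f(\xi)=|\xi|^{-k}\widetilde\phi(\xi/\lambda)\,\widehat{|D|^kf}(\xi)$, so $f=K_\lambda*(|D|^kf)$ with $K_\lambda=\mathcal{F}^{-1}\big(|\xi|^{-k}\widetilde\phi(\cdot/\lambda)\big)$. Scaling gives $\|K_\lambda\|_{L^1}=\lambda^{-k}\|K_1\|_{L^1}$ with $K_1=\mathcal{F}^{-1}(|\eta|^{-k}\widetilde\phi)$, and Young's inequality $L^1*L^p\to L^p$ produces $\|f\|_{L^p}\le\lambda^{-k}\|K_1\|_{L^1}\,\||D|^kf\|_{L^p}$, i.e. the desired lower bound with $c=\|K_1\|_{L^1}^{-1}$.

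The one point demanding care — the main, if modest, obstacle — is the finiteness of the kernel norms, which reduces to the regularity of the symbols at the origin. In the shell case this is automatic: $|\eta|^{-k}\widetilde\phi(\eta)$ is genuinely smooth and compactly supported because the annular cutoff avoids the singularity of $|\eta|^{-k}$, so $K_1\in\mathcal{S}(\mathbb{R}^n)$ and $\|K_1\|_{L^1}<\infty$. In the ball case the symbol $|\xi|^k\phi(\xi)$ is only continuous, not smooth, at the origin when $k\notin 2\mathbb{N}$; nevertheless, since $k\ge 0$ this singularity is integrable and $|\xi|^k\phi$ is $C^\infty$ off the origin with compact support, so $g_1$ decays faster than any polynomial and lies in every $L^r$, $1\le r\le\infty$. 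Finally, for $k\in\mathbb{N}$ with $|D|^k$ replaced by $\sup_{|\gamma|=k}\partial^\gamma$, the upper bounds follow verbatim using the smooth symbols $(i\xi)^\gamma$, while for the lower bound one expresses $\widehat f$ on the shell through the family $\{\widehat{\partial^\gamma f}\}_{|\gamma|=k}$ via smooth multipliers supported in the annulus, which is again harmless away from the origin.
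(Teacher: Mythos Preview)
The paper does not supply a proof of this lemma; it is quoted as a classical fact, with the Littlewood--Paley background attributed to \cite{BCD11}. Your approach---writing the operators as convolutions with rescaled kernels and applying Young's inequality---is exactly the standard one found in that reference, and the overall architecture (scaling to extract the powers of $\lambda$, annular cutoff for the lower bound) is correct.

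There is, however, one inaccurate justification. You claim that because $|\xi|^k\phi(\xi)$ is $C^\infty$ off the origin, compactly supported, and integrable at the origin, its inverse Fourier transform $g_1$ ``decays faster than any polynomial.'' This is false when $k\notin 2\mathbb{N}$: the symbol has only finitely many derivatives in $L^1$, since $\partial^\alpha(|\xi|^k)$ behaves like $|\xi|^{k-|\alpha|}$ near $0$ and is integrable only for $|\alpha|<k+n$. Integration by parts therefore gives $|g_1(x)|\lesssim |x|^{-N}$ only for $N<k+n$, not for all $N$. Fortunately this is still sufficient for your purposes: for $k>0$ one may choose $N\in(n,k+n)$, which yields $g_1\in L^1$, and the case $k=0$ is trivial since then $g_1=\mathcal{F}^{-1}\phi\in\mathcal{S}$. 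Combined with the obvious bound $g_1\in L^\infty$ (the symbol is in $L^1$), interpolation gives $g_1\in L^r$ for every $r\in[1,\infty]$, which is all Young's inequality requires. So the conclusion stands, but the decay claim should be weakened to this.
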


\section{Strichartz-type estimates for the corresponding linear equation}\label{sec Stri}
\setcounter{section}{3}\setcounter{equation}{0}

 This section is devoted to derive the Strichartz-type estimates of the following linear dispersive dissipative
equation
\begin{equation}\label{eq LddQG}
\begin{cases}
  \partial_t\tilde\theta + \nu |D|^\alpha \tilde\theta + A \mathcal{R}_1 \tilde\theta = f, \\
  \tilde\theta|_{t=0}=\tilde\theta_0.
\end{cases}
\end{equation}

Applying the Fourier transformation (in the spatial variable only) to the upper equation, we get
\begin{equation*}
\begin{cases}
  \partial_t \widehat{\tilde\theta} + \nu |\xi|^\alpha \widehat{\tilde\theta} - A i \frac{\xi_1}{|\xi|} \widehat{\tilde\theta}
  = \widehat{f}, \\
  \widehat{\tilde\theta}|_{t=0}=\widehat{\tilde\theta_0}.
\end{cases}
\end{equation*}
Furthermore,
\begin{equation*}
  \widehat{\tilde\theta}(t,\xi) = e^{i A t \frac{\xi_1}{|\xi|} -\nu t |\xi|^\alpha} \widehat{\tilde\theta_0}(\xi)
  + \int_0^t  e^{i A (t-\tau) \frac{\xi_1}{|\xi|} -\nu (t-\tau) |\xi|^\alpha} \widehat{f}(\tau,\xi)\mathrm{d}\tau.
\end{equation*}
Thus by setting
\begin{equation*}
  \mathcal{G}^A(t): g \mapsto \int_{\mathbb{R}^2_\xi} e^{i A t a(\xi)-\nu t|\xi|^\alpha+ix\cdot \xi}
  \widehat{g}(\xi)\mathrm{d}\xi,
\end{equation*}
with
\begin{equation*}
  a(\xi)\triangleq \xi_1/|\xi|,
\end{equation*}
we have
\begin{equation}\label{eq LINEXP}
  \tilde\theta(t)=\mathcal{G}^A(t)\tilde\theta_0 + \int_0^t \mathcal{G}^A(t-\tau) f(\tau)\mathrm{d}\tau.
\end{equation}
Hence, it reduces to consider the Strichartz-type estimate of $\mathcal{G}^A(t)g$,
and because the phase function $a(\xi)$ is somewhat "singular",
we shall study the case when $\widehat{g}$ is supported in the set $\mathcal{B}_{r,R}$ for some $0<r<R$, with
\begin{equation*}
  \mathcal{B}_{r,R}\triangleq \{\xi\in \mathbb{R}^2: |\xi_1|\geq r, \;|\xi|\leq R\}.
\end{equation*}

The main result of this section is as follows.
\begin{proposition}\label{prop Strich}
  Let $r,R$ be two positive numbers satisfying $r<R$, and $g\in L^2(\mathbb{R}^2)$ satisfying
$\mathrm{supp}\;\widehat{g}\subset\mathcal{B}_{r,R} $. Then for every $p\in[1,\infty]$ and $q\in [2,\infty]$,
there exists an absolute constant $C=C_{r,R,p,q,\nu}$ such that
\begin{align}\label{eq Stri}
  \|\mathcal{G}^A(t)g\|_{L^p(\mathbb{R}^+; L^q(\mathbb{R}^2))} \leq C A^{-\frac{1}{8p}(1-\frac{2}{q})}
  \|g\|_{L^2(\mathbb{R}^2)}.
\end{align}
\end{proposition}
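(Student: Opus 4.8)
The plan is to establish the Strichartz estimate \eqref{eq Stri} by the standard $TT^*$ duality argument, reducing the space-time bound on $\mathcal{G}^A(t)g$ to a dispersive decay estimate for the kernel of the operator $\mathcal{G}^A(t)\big(\mathcal{G}^A(s)\big)^*$. First I would record the two endpoint ingredients. The $L^2$ conservation piece is trivial: since the symbol $e^{iAta(\xi)-\nu t|\xi|^\alpha}$ has modulus $e^{-\nu t|\xi|^\alpha}\le 1$, Plancherel gives $\|\mathcal{G}^A(t)g\|_{L^2}\le \|g\|_{L^2}$ for every $t\ge 0$, so the case $q=2$ holds with no power of $A$ (consistent with the exponent $1-\tfrac{2}{q}$ vanishing at $q=2$). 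The nontrivial ingredient is a \emph{dispersive estimate}: I expect a bound of the form
\begin{equation*}
  \|\mathcal{G}^A(t)g\|_{L^\infty(\mathbb{R}^2)} \lesssim_{r,R,\nu}\, (A|t|)^{-1/2}\,\|g\|_{L^1(\mathbb{R}^2)},
\end{equation*}
coming from stationary phase applied to the kernel $K^A(t,x)=\int_{\mathcal{B}_{r,R}}\Psi(\xi)\,e^{iAta(\xi)-\nu t|\xi|^\alpha+ix\cdot\xi}\mathrm{d}\xi$, where $\Psi$ is a smooth cutoff adapted to the support annulus-with-slab region $\mathcal{B}_{r,R}$. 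This is precisely the content announced for Lemma~\ref{lem disp} (the auxiliary kernel \eqref{eq H}), which I may invoke: the restriction $|\xi_1|\ge r$ keeps the phase $a(\xi)=\xi_1/|\xi|$ away from its critical directions and controls the Hessian of $Aa(\xi)$ from below by a constant multiple of $A$, yielding the $(A|t|)^{-1/2}$ gain while the dissipation factor $e^{-\nu t|\xi|^\alpha}$ only helps.

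Granting the dispersive estimate, the next step is interpolation in the spatial exponent. Interpolating the $L^1\to L^\infty$ decay against the $L^2\to L^2$ bound by Riesz--Thorin gives, for each $q\in[2,\infty]$,
\begin{equation*}
  \|\mathcal{G}^A(t)g\|_{L^q(\mathbb{R}^2)} \lesssim (A|t|)^{-\frac12\left(1-\frac{2}{q}\right)}\,\|g\|_{L^{q'}(\mathbb{R}^2)},
\end{equation*}
where $q'$ is the conjugate exponent. I would then run the abstract $TT^*$ machinery: writing $T=\mathcal{G}^A(\cdot)$, the desired estimate $\|Tg\|_{L^p_tL^q_x}\lesssim A^{-\frac{1}{8p}(1-2/q)}\|g\|_{L^2}$ is equivalent to a bound on $TT^*$, whose kernel is governed by $\mathcal{G}^A(t)(\mathcal{G}^A(s))^*=\mathcal{G}^A(t+s)$ acting through the above decay with $|t|$ replaced by $|t-s|$. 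Applying the Hardy--Littlewood--Sobolev inequality in the time variable to the resulting convolution with $|t-s|^{-\frac12(1-2/q)}$ produces the $L^p_t$ norm, and tracking the powers of $A$ through the substitution yields the claimed exponent.

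The main obstacle I anticipate is the precise bookkeeping that converts the decay rate into the stated power $A^{-\frac{1}{8p}(1-\frac{2}{q})}$. The peculiar factor $\tfrac18$ (rather than the naive $\tfrac12$ one would read off from a clean time-decay/HLS computation) strongly signals that the dispersive estimate is \emph{not} uniformly $(A|t|)^{-1/2}$ across all frequencies in $\mathcal{B}_{r,R}$, but degrades near the bad set where $a(\xi)$ has small or vanishing derivatives — exactly the two-dimensional difficulty flagged in Remark~\ref{rem main1}. I therefore expect the real work to be a careful splitting of the frequency support into a region of genuine non-degeneracy, where full $A^{-1/2}$-type decay holds, and a thin neighborhood of the degenerate directions whose measure must be optimized against $A$; balancing these two contributions (a typical \emph{good region / bad region} interpolation) is what produces the suboptimal but $A$-dependent exponent $\tfrac{1}{8p}(1-\tfrac2q)$. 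Establishing the sharp lower bound on the relevant phase derivatives on the good region, uniformly in the cutoff parameters $r,R$, and quantifying the bad region, will be the technically delicate heart of the argument; once that dichotomy is set up, the interpolation and HLS steps are routine.
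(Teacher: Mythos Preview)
Your overall $TT^*$ strategy is right, but two key ingredients are misidentified, and this is exactly why the exponent $\tfrac{1}{8p}(1-\tfrac2q)$ looks mysterious to you.

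First, the dispersive rate is \emph{not} $(A|t|)^{-1/2}$. The phase $a(\xi)=\xi_1/|\xi|$ is homogeneous of degree $0$, so its gradient is orthogonal to $\xi$ and its Hessian is singular in the radial direction for every $\xi$; the restriction $|\xi_1|\ge r$ does not cure this. What Lemma~\ref{lem disp} actually delivers (for the kernel $K$ of \eqref{eq K}, not the problematic anisotropic kernel \eqref{eq H} you cite) is
\[
|K(t,\mu,z)|\le C_{r,R}\,\min\{1,\mu^{-1/4}\}\,e^{-r^\alpha\nu t/4},
\]
so that $\|\mathcal{G}^A(t)g\|_{L^\infty}\lesssim (At)^{-1/4}e^{-r^\alpha\nu t/4}\|g\|_{L^1}$ and, after interpolation with the trivial $L^2$ bound,
\[
\|\mathcal{G}^A(t)g\|_{L^q}\lesssim (At)^{-\frac{1}{4}(1-\frac{2}{q})}e^{-r^\alpha\nu t/4}\|g\|_{L^{q'}}.
\]
The angular splitting you anticipate does occur, but it lives entirely inside the proof of Lemma~\ref{lem disp} and its net output is the clean $\mu^{-1/4}$; no further good/bad decomposition is needed at the level of Proposition~\ref{prop Strich}.

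Second, the time integrability is not obtained by Hardy--Littlewood--Sobolev. With decay exponent $\tfrac14(1-\tfrac2q)<1$, HLS would tie $p$ to $q$ and could never give the full range $p\in[1,\infty]$. The paper instead uses the \emph{dissipation}: in the $TT^*$ computation the bilinear integrand carries the factor $e^{-r^\alpha\nu(t+\tau)/4}$, and one simply estimates
\[
\int_{(\mathbb{R}^+)^2}\Big(\frac{1}{A|t-\tau|}\Big)^{\frac14(1-\frac2q)}e^{-\frac{r^\alpha\nu(t+\tau)}{4}}\,\mathrm{d}\tau\,\mathrm{d}t
= A^{-\frac14(1-\frac2q)}\,C_{\nu,q,r},
\]
the double integral being finite because the singularity at $t=\tau$ is integrable and the exponential kills the tails. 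Taking the square root gives $\|\mathcal{G}^A g\|_{L^1_tL^q_x}\lesssim A^{-\frac18(1-\frac2q)}\|g\|_{L^2}$. The $L^\infty_tL^q_x$ bound is free (Bernstein plus Plancherel, using the compact frequency support), and interpolating between $p=1$ and $p=\infty$ yields $A^{-\frac{1}{8p}(1-\frac2q)}$. Thus $\tfrac18=\tfrac14\times\tfrac12$: a quarter from the dispersive lemma, a half from the $TT^*$ square root, and the $\tfrac1p$ from the final interpolation.
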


Proposition \ref{prop Strich} combined with \eqref{eq LINEXP} and Minkowiski's inequality (cf. \eqref{eq EST2} below)
implies the following Strichartz-type estimates for the linear system \eqref{eq LddQG}.
\begin{corollary}
  Let $r$, $R$ be two positive numbers satisfying $r<R$. Assume that $\tilde\theta_0\in L^2(\mathbb{R}^2)$ and
$f\in L^1(\mathbb{R}^+; L^2(\mathbb{R}^2))$ satisfying that
$$ \mathrm{supp}\, \widehat{\tilde\theta_0}\;\cup\; \Big(\cup_{t\geq 0}\,\mathrm{supp}\,\widehat{f}(t,\cdot)\Big)\subset \mathcal{B}_{r,R}, $$
and $\tilde\theta$ solves the corresponding linear dispersive equation \eqref{eq LddQG}.
Then for every $p\in[1,\infty]$ and $q\in [2,\infty]$, there exists an absolute constant $C=C_{r,R,p,q,\nu}$ such that
\begin{align*}
  \|\tilde\theta\|_{L^p(\mathbb{R}^+; L^q(\mathbb{R}^2))} \leq C A^{-\frac{1}{8p}(1-\frac{2}{q})}\big(\|\tilde\theta_0\|_{L^2(\mathbb{R}^2)}
  + \|f\|_{L^1(\mathbb{R}^+; L^2(\mathbb{R}^2))} \big).
\end{align*}
\end{corollary}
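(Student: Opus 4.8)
The plan is to obtain the corollary as an immediate consequence of Proposition \ref{prop Strich} by inserting the Duhamel representation \eqref{eq LINEXP} of the solution. The first observation I would record is that $\mathcal{G}^A(t)$ is a Fourier multiplier with symbol $e^{iAta(\xi)-\nu t|\xi|^\alpha}$, so it leaves frequency supports unchanged. Consequently, under the hypothesis that $\mathrm{supp}\,\widehat{\tilde\theta_0}$ and each $\mathrm{supp}\,\widehat{f}(\tau,\cdot)$ lie in $\mathcal{B}_{r,R}$, the functions $\mathcal{G}^A(t)\tilde\theta_0$ and $\mathcal{G}^A(t-\tau)f(\tau)$ also have spatial Fourier transforms supported in $\mathcal{B}_{r,R}$, so that the hypothesis of Proposition \ref{prop Strich} is met by each building block of \eqref{eq LINEXP}. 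For the homogeneous contribution I would simply apply \eqref{eq Stri} with $g=\tilde\theta_0$ to get
\[
  \|\mathcal{G}^A(t)\tilde\theta_0\|_{L^p(\mathbb{R}^+;L^q)}\leq C_{r,R,p,q,\nu}\,A^{-\frac{1}{8p}(1-\frac{2}{q})}\|\tilde\theta_0\|_{L^2}.
\]

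For the inhomogeneous (retarded) contribution I would write $\int_0^t=\int_0^{\infty}\mathbf{1}_{\{\tau<t\}}$ and apply Minkowski's inequality in the mixed space-time norm $L^p(\mathbb{R}^+;L^q)$ to pull the $\tau$-integration outside, obtaining
\[
  \Big\|\int_0^{t}\mathcal{G}^A(t-\tau)f(\tau)\,\mathrm{d}\tau\Big\|_{L^p(\mathbb{R}^+;L^q)}\leq \int_0^{\infty}\big\|\mathbf{1}_{\{\tau<t\}}\,\mathcal{G}^A(t-\tau)f(\tau)\big\|_{L^p(\mathbb{R}^+;L^q)}\,\mathrm{d}\tau.
\]
For each fixed $\tau$ the time translation $t\mapsto t-\tau$ shows that the inner norm equals $\|\mathcal{G}^A(\cdot)f(\tau)\|_{L^p(\mathbb{R}^+;L^q)}$, to which Proposition \ref{prop Strich} applies with $g=f(\tau)$, giving the bound $C_{r,R,p,q,\nu}A^{-\frac{1}{8p}(1-\frac{2}{q})}\|f(\tau)\|_{L^2}$. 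Integrating this over $\tau\in\mathbb{R}^+$ then produces $C_{r,R,p,q,\nu}A^{-\frac{1}{8p}(1-\frac{2}{q})}\|f\|_{L^1(\mathbb{R}^+;L^2)}$. Adding the two contributions yields the asserted estimate with the same power of $A$, the constant being inherited verbatim from Proposition \ref{prop Strich}.

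The genuine difficulty of this section lies in Proposition \ref{prop Strich} itself, and beneath it in the delicate two-dimensional dispersive estimate (Lemma \ref{lem disp}); by contrast the corollary is a soft consequence requiring no new oscillatory-integral input. The only point I would treat with some care is the legitimacy of the Minkowski-plus-translation maneuver for the retarded integral, namely the interchange of the $\tau$-integration with the $L^p(\mathbb{R}^+;L^q)$ norm together with the accompanying change of variables. This is standard for $p,q\in[1,\infty]$ (including the endpoint $p=\infty$, where the factor $A^{-\frac{1}{8p}(1-\frac{2}{q})}$ degenerates to $1$ and the estimate reduces to the expected uniform bound), provided the integrand is measurable in $(t,\tau,x)$, which follows from the explicit symbol representation of $\mathcal{G}^A$.
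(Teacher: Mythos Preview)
Your proof is correct and follows exactly the approach the paper indicates: it combines the Duhamel representation \eqref{eq LINEXP} with Proposition \ref{prop Strich} and Minkowski's inequality, the latter being precisely the maneuver the paper points to via \eqref{eq EST2}. There is nothing to add.
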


In order to prove Proposition \ref{prop Strich}, we introduce the following kernel function
\begin{equation}\label{eq K}
  K(t, \mu, z )\triangleq \int_{\mathbb{R}^2_\xi}\Psi(\xi) e^{i\mu a(\xi)+ i z\cdot \xi -\nu t |\xi|^\alpha  }\mathrm{d}\xi,
\end{equation}
where $t>0$, $\mu>0$, $z\in\mathbb{R}^2$, $\Psi\in \mathcal{D}(\mathbb{R}^2)$ is a smooth cut-off function such that $\Psi\equiv1$ on
$\mathcal{B}_{r,R}$ and is supported in $\mathcal{B}_{\frac{r}{2},2R}$, and for instance we can explicitly define
\begin{equation}\label{eq Psi}
  \Psi(\xi)=\chi\Big(\frac{|\xi|}{R}\Big)\Big(1-\chi\Big(\frac{2|\xi_1|}{r}\Big)\Big)
\end{equation}
with $\chi\in \mathcal{D}(]-2,2[)$ satisfying $\chi(x)\equiv 1$ for $|x|\leq 1$.

As a first step, we show the following basic dispersive estimate of $K$.
\begin{lemma}\label{lem disp}
  Let $r,R$ be two positive numbers satisfying $r<R$, and $K$ be defined by \eqref{eq K}. Then
there exists an absolute constant $C=C_{r,R}$ such that for every $z\in\mathbb{R}^2$,
\begin{equation*}\label{eq disp}
  |K(t,\mu,z)|\leq C \min\{1,\mu^{-\frac{1}{4}}\} e^{-r^\alpha\nu t/4}.
\end{equation*}
\end{lemma}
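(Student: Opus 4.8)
The plan is to treat $K$ as an oscillatory integral with phase $\phi(\xi)=\mu a(\xi)+z\cdot\xi$ and amplitude $\Psi(\xi)e^{-\nu t|\xi|^\alpha}$, and to extract the claimed $\mu^{-1/4}$ decay by reducing to a one-dimensional stationary-phase analysis in the angular variable. First, the factor $\min\{1,\mu^{-1/4}\}$ and the exponential time decay are handled separately. On $\mathrm{supp}\,\Psi\subset\mathcal B_{r/2,2R}$ one has $|\xi|\ge r/2$, hence $e^{-\nu t|\xi|^\alpha}\le e^{-\nu t(r/2)^\alpha}\le e^{-r^\alpha\nu t/4}$ (using $2^{-\alpha}\ge 1/4$ for $\alpha<1$), so the trivial bound $|K|\le\int_{\mathbb R^2}|\Psi(\xi)|e^{-\nu t|\xi|^\alpha}\,\mathrm{d}\xi\lesssim_{r,R}e^{-r^\alpha\nu t/4}$ already settles the case $\mu\le 1$. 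It therefore remains to prove $|K|\lesssim_{r,R}\mu^{-1/4}e^{-r^\alpha\nu t/4}$ for $\mu\ge 1$.

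Passing to polar coordinates $\xi=\rho(\cos\omega,\sin\omega)$ and using that $a(\xi)=\cos\omega$ is homogeneous of degree zero, I would write
\begin{equation*}
  K=\int_{r/2}^{2R}\rho\, e^{-\nu t\rho^\alpha}\Big(\int_{-\pi}^{\pi}\widetilde\Psi(\rho,\omega)\,e^{i\Phi_\rho(\omega)}\,\mathrm{d}\omega\Big)\,\mathrm{d}\rho,\qquad \Phi_\rho(\omega)=(\mu+\rho z_1)\cos\omega+\rho z_2\sin\omega,
\end{equation*}
where $\widetilde\Psi(\rho,\omega)=\Psi(\rho\cos\omega,\rho\sin\omega)$ is smooth, of bounded variation in $\omega$ uniformly in $\rho$, and supported on the arc $\{|\cos\omega|\ge r/(4R)\}$. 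The crucial observation is that $\Phi_\rho(\omega)=\Lambda(\rho)\cos(\omega-\omega_*)$ with
\begin{equation*}
  \Lambda(\rho)=\sqrt{(\mu+\rho z_1)^2+\rho^2 z_2^2}=\sqrt{|z|^2\rho^2+2\mu z_1\rho+\mu^2},
\end{equation*}
so that the dissipation factor, depending only on $\rho$, factors out of the oscillatory integral and contributes the pointwise bound $e^{-\nu t\rho^\alpha}\le e^{-r^\alpha\nu t/4}$.

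The inner angular integral $J(\rho)$ is then estimated by the classical van der Corput lemma. Since $\Phi_\rho''=-\Phi_\rho$, the stationary points $\omega_*,\omega_*+\pi$ are nondegenerate with $|\Phi_\rho''|=\Lambda(\rho)$ there; splitting the arc at the zeros of $\Phi_\rho''$ (where $|\Phi_\rho'|\sim\Lambda$ and integration by parts applies) and applying the second-derivative van der Corput estimate on the remaining pieces yields $|J(\rho)|\lesssim_{r,R}\min\{1,\Lambda(\rho)^{-1/2}\}$, uniformly in $z$. It remains to integrate in $\rho$: using the explicit quadratic $\Lambda(\rho)^2$, whose minimum over $\rho\in\mathbb R$ equals $\mu|z_2|/|z|$, one checks by a direct case analysis (distinguishing $|z|\lesssim\mu$ from $|z|\gtrsim\mu$) that
\begin{equation*}
  \int_{r/2}^{2R}\min\{1,\Lambda(\rho)^{-1/2}\}\,\mathrm{d}\rho\lesssim_{r,R}\mu^{-1/4},
\end{equation*}
which, combined with the extracted exponential factor, gives the desired bound.

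The main obstacle is the uniformity of this last radial estimate in the direction of $z$, which is precisely where the two-dimensional setting is more delicate than the three-dimensional Navier--Stokes--Coriolis analogue of \cite{CDGG06}. The dangerous configuration is the resonant one $z_2\approx 0$ and $|z_1|\sim\mu$: then $\Lambda(\rho)$ degenerates to zero at an interior radius $\rho_0=\mu/|z_1|\in[r/2,2R]$, the angular phase loses all oscillation there, and one must rely on the smallness of the $\rho$-interval on which $\Lambda$ is small to recover decay, balancing this against the $\Lambda^{-1/2}$ gain elsewhere. Controlling this balance uniformly in $z$ --- together with the degeneracy of the angular curvature of $a$ along the axis $\xi_2=0$ and the transition of the stationary points across the boundary of the support arc --- is where the argument requires the most care; the endpoint contributions there are absorbed by the smooth vanishing of $\Psi$ at the boundary of $\mathcal B_{r/2,2R}$.
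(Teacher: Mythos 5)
Your proposal is correct, and it takes a genuinely different route from the paper's proof --- in fact a sharper one. The paper never passes to polar coordinates: it applies the non-stationary-phase operator $\mathcal{L}=\frac{\mathrm{Id}-i\Phi\cdot\nabla_\xi}{1+\mu|\Phi|^2}$ with $\Phi=-\frac{\xi_2}{|\xi|^3}\xi^\perp+\frac{z}{\mu}$, integrates by parts once, and reduces matters to the two-dimensional integral $\int_{\mathcal{B}_{r/2,2R}}\frac{\mathrm{d}\xi}{1+\mu|\Phi(\xi,z)|^2}$; this is then bounded using only $|\Phi|\geq |\xi_2|\,|\xi_{z,\parallel}|/|\xi|^3$ (the component of $\Phi$ parallel to $z$ is discarded) together with a lengthy case analysis over the polar angle $\phi$ of $z$, whose worst regime produces the $\mu^{-1/4}$. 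You instead exploit the degree-zero homogeneity of $a$: in polar coordinates the phase diagonalizes exactly as $\Lambda(\rho)\cos(\omega-\omega_*(\rho))$ with $\Lambda(\rho)^2=(\mu+\rho z_1)^2+\rho^2z_2^2$, the dissipation $e^{-\nu t\rho^\alpha}$ depends only on $\rho$ and factors out, and one-dimensional van der Corput gives $|J(\rho)|\lesssim_{r,R}\min\{1,\Lambda(\rho)^{-1/2}\}$ uniformly (your worries about the degenerate axis $\xi_2=0$ and about stationary points crossing the support arc are unnecessary: van der Corput's constant is independent of where the stationary points sit). The radial step you only sketch does close, and gives more than you claim: if $|z|\leq\mu/(8R)$ then $|\mu+\rho z_1|\geq 3\mu/4$ on the whole range, so the radial integral is $\lesssim R\mu^{-1/2}$; if $|z|\geq\mu/(8R)$, then writing $\Lambda(\rho)^2=|z|^2(\rho-\rho_0)^2+\mu^2z_2^2/|z|^2$ with $\rho_0=-\mu z_1/|z|^2$ and using $\Lambda\geq|z|\,|\rho-\rho_0|$, one gets $\int_{r/2}^{2R}\min\{1,\Lambda^{-1/2}\}\mathrm{d}\rho\leq 2\int_0^{2R}\min\{1,(|z|s)^{-1/2}\}\mathrm{d}s\lesssim |z|^{-1}+R^{1/2}|z|^{-1/2}\lesssim_{R}\mu^{-1/2}$ for $\mu\geq 1$, which in particular covers your ``dangerous'' resonant configuration $z_2\approx 0$, $|z_1|\sim\mu$. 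So your method actually proves the stronger estimate $|K(t,\mu,z)|\leq C_{r,R}\min\{1,\mu^{-1/2}\}e^{-r^\alpha\nu t/4}$ uniformly in $z$; the paper's rate is lossy precisely because dropping the parallel component of $\Phi$ degrades the decay near $\xi_2=0$, where its lower bound becomes quartic. What each approach buys: the paper's argument does not use the exact trigonometric structure of the phase and so is more robust under perturbation, while yours is shorter, makes the origin of the decay transparent (angular curvature versus radial degeneracy), and the improvement $\mu^{-1/4}\to\mu^{-1/2}$ would propagate through the $TT^*$ argument to give $A^{-\frac{1}{4p}(1-\frac{2}{q})}$ in \eqref{eq Stri}, and hence weaker largeness requirements on $A$ in Theorems \ref{thm conver} and \ref{thm stab}.
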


\begin{proof}[Proof of Lemma \ref{lem disp}]
We shall use the method of stationary phase to show this formula. Denoting by
$$\Phi(\xi,z)\triangleq \nabla_\xi\Big( a(\xi)+ \frac{z}{\mu}\cdot\xi\Big)= - \frac{\xi_2 }{|\xi|^3}\xi^{\perp}+ \frac{z}{\mu}$$
with $\xi^\perp = (-\xi_2,\xi_1)$, we introduce the following differential operator
\begin{equation*}
  \mathcal{L}\triangleq \frac{\mathrm{Id}-i\Phi(\xi,z)\cdot \nabla_\xi}{1+ \mu |\Phi(\xi,z)|^2},
\end{equation*}
and we see that $\mathcal{L}e^{i \mu a(\xi) + i z \cdot \xi} = e^{i\mu a(\xi)+i z\cdot\xi}$.
From integration by parts, we have
\begin{equation*}
  K(t,\mu, z)= \int_{\mathbb{R}^2}e^{i\mu a(\xi) + iz\cdot\xi} \mathcal{L}^t (\Psi(\xi)e^{-\nu t |\xi|^\alpha})\mathrm{d}\xi,
\end{equation*}
where $\mathcal{L}^t$ is given by
\begin{equation*}
  \mathcal{L}^t \triangleq \frac{1}{1+ \mu |\Phi(\xi,z)|^2}\Big(1 + i \nabla\cdot\Phi-i\frac{2\mu\sum_{j,k}\Phi^j \Phi^k \partial_{\xi_j}
  \Phi^k}{1+\mu|\Phi|^2} \Big)\mathrm{Id}
   + \frac{\Phi(\xi,z)\cdot \nabla_\xi }{1+\mu|\Phi(\xi,z)|^2}.
\end{equation*}
Since $\xi$ is supported in $\mathcal{B}_{\frac{r}{2},2R}$, we find
\begin{equation*}
  |\nabla \Phi(\xi,z)|= \Big|\nabla\Big(\frac{\xi_2 \xi^\perp}{|\xi|^3}\Big) \Big| \lesssim \frac{1}{r^2},
\end{equation*}
thus
\begin{equation*}
  \Big|  1 + i \nabla\cdot\Phi-i\frac{2\mu\sum_{j,k}\Phi^j \Phi^k \partial_{\xi_j}
  \Phi^k}{1+\mu|\Phi|^2}\Big|\lesssim 1+\frac{1}{r^2}.
\end{equation*}
Since $\Psi\in \mathcal{D}(\mathbb{R}^2)$ satisfying $|\nabla \Psi|\lesssim \frac{1}{r}$,
we infer that
\begin{equation*}
\begin{aligned}
  |\nabla(\Psi(\xi) e^{-\nu t|\xi|^\alpha})| & \leq |\nabla \Psi| e^{-\nu t |\xi|^\alpha}
  +  |\Psi|  (\nu t |\xi|^{\alpha}e^{-\nu t|\xi|^\alpha/2})|\xi|^{-1} e^{-\nu t |\xi|^\alpha/2} \\
  & \lesssim \frac{1}{r} e^{-\nu t r^\alpha/4}.
\end{aligned}
\end{equation*}
If $|\Phi(\xi,z)|\geq 1$, this is the case of nonstationary phase, and collecting the upper estimates and
noting that $\mu^{1/2}|\Phi|\leq 1+ \mu|\Phi|^2$, we have
\begin{equation*}
  |\mathcal{L}^t(\Psi(\xi)e^{-\nu t |\xi|^\alpha})|\lesssim \frac{1}{r^2} \min\{1,\mu^{-1/2}\} e^{-\nu t r^{\alpha}/4}.
\end{equation*}
This further yields
\begin{equation*}
  |K(t,\mu, z)|\leq \int_{\mathcal{B}_{r/2,2R}}|\mathcal{L}^t(\Psi(\xi)e^{-\nu t|\xi|^\alpha})|\mathrm{d}\xi
  \lesssim \frac{R^2}{r^2}\min\{1,\mu^{-1/2} \} e^{-\nu t r^\alpha/4}.
\end{equation*}
If $|\Phi(\xi,z)|\leq 1$, this corresponds to the case of stationary case and is more delicate. Gathering the necessary estimates
at above, we have
\begin{equation*}
  |\mathcal{L}^t(\Psi(\xi)e^{-\nu t|\xi|^\alpha})|\lesssim \frac{1}{r^2} \frac{1}{1+ \mu|\Phi(\xi,z)|^2} e^{-\nu t r^\alpha/4},
\end{equation*}
and it further leads to
\begin{equation}\label{eq K1}
  |K(t,\mu, z)|\lesssim \frac{1}{r^2} e^{-\nu t r^\alpha/4} \int_{\mathcal{B}_{r/2,2R}} \frac{1}{1+\mu |\Phi(\xi,z)|^2}\mathrm{d}\xi.
\end{equation}
For the case $z=0$, we see that $\Phi(\xi,0)=-\frac{\xi_2}{|\xi|^3}\xi^\perp$ and $|\Phi(\xi,0)|=\frac{|\xi_2|}{|\xi|^2}$, thus
\begin{equation}
  |K(t,\mu,0)|\lesssim \frac{ R}{r^2} e^{-\nu t r^{\alpha}/4} \int_{0}^{2R}\frac{1}{1+\mu \xi_2^2/(4R^2)}\mathrm{d}\xi_2
  \lesssim \frac{R^2}{r^2} e^{-\nu tr^{\alpha}/4} \mu^{-1/2}.
\end{equation}
For every $\xi\in\mathcal{B}_{\frac{r}{2},2R}$ and $z\in\mathbb{R}^2\setminus\{0\}$, we have the following orthogonal
decomposition
\begin{equation*}
  \xi= (\xi_1, \xi_2) = \xi_{z,\parallel} e_z + \xi_{z,\perp} e_z^\perp,
\end{equation*}
where $e_z=\frac{z}{|z|}$, $e_z^\perp=\frac{z^\perp}{|z|}$,
\begin{equation*}
  \xi_{z,\parallel}\triangleq \xi\cdot e_z,\quad\mathrm{and}\quad \xi_{z,\perp}\triangleq \xi\cdot e_z^\perp.
\end{equation*}
Noting that $\xi^\perp = (-\xi_{z,\perp}) e_z + \xi_{z,\parallel} e_z^\perp$,
we have
\begin{equation*}
  |\Phi(\xi,z)|=\Big|-\frac{\xi_2}{|\xi|^3}\xi^\perp + \frac{z}{\mu}\Big|
  =\Big|\Big(\frac{\xi_2\xi_{z,\perp}}{|\xi|^3} +\frac{|z|}{\mu} \Big)e_z -\frac{\xi_2\xi_{z,\parallel}}{|\xi|^3}e_z^\perp \Big|
  \geq \frac{|\xi_2| | \xi_{z,\parallel}|}{|\xi|^3},
\end{equation*}
and thus for \eqref{eq K1}, it reduces to consider the following integral
\begin{equation*}
  H(\mu, z)\triangleq \int_{\mathcal{B}_{r/2,2R}}\frac{1}{1+ \mu \xi_2^2 \xi_{z,\parallel}^2/(8R^3)}\mathrm{d}\xi.
\end{equation*}
With no loss of generality, we assume that $z=|z|e_z=|z|(\cos\phi,\sin\phi)$ with $\phi\in [0,\frac{\pi}{2}]$.
Then for every $\xi=(\xi_1,\xi_2)$, we have $\xi_{z,\parallel}=\xi\cdot e_z =\xi_1\cos\phi+\xi_2\sin\phi$, thus
if $\xi_1\xi_2\geq 0$, we observe that
\begin{equation*}
  \xi^2_{z,\parallel}\geq \xi_1^2\cos^2\phi+ \xi_2^2\sin^2\phi \geq \min\{\xi_1^2,\xi_2^2\}.
\end{equation*}
Hence
\begin{equation*}
\begin{aligned}
  \int_{\mathcal{B}_{\frac{r}{2},2R}\cap\{\xi_1\xi_2\geq 0\}} \frac{1}{1+ \mu \xi_2^2 \xi_{z,\parallel}^2/(8R^3)}\mathrm{d}\xi_1
  \mathrm{d}\xi_2 & \leq \int_{\mathcal{B}_{\frac{r}{2},2R}\cap\{\xi_1\xi_2\geq 0\}}
  \frac{1}{1+ \mu \xi_2^2\min\{\xi_1^2,\xi_2^2\}/(8 R^3)}\mathrm{d}\xi_1 \mathrm{d}\xi_2 \\
  & \leq  R \int_{-2R}^{2R}\frac{1}{1+ \mu \xi_2^2\min\{r^2/4,\xi_2^2\}/(8 R^3)}\mathrm{d}\xi_2 \\
  & \lesssim \min\Big\{R^2,\max\big\{\frac{R^{5/2}}{r}\mu^{-\frac{1}{2}},R^{7/4}\mu^{-\frac{1}{4}}\big\} \Big\}.
\end{aligned}
\end{equation*}
Now we only need to treat the following integral
\begin{equation*}
\begin{aligned}
  \int_{\mathcal{B}_{\frac{r}{2},2R}\cap \{\xi_1\xi_2\leq 0\}}\frac{1}{1+\mu \xi_2^2 \xi_{z,\parallel}^2/(8R^3)}\mathrm{d}\xi_1
  \mathrm{d}\xi_2  = 2\int_{\mathcal{B}_{\frac{r}{2},2R}\cap \{\xi_1\geq0,\xi_2\leq 0\}}\frac{1}{1+\mu \xi_2^2
  \xi_{z,\parallel}^2/(8R^3)}\mathrm{d}\xi_1\mathrm{d}\xi_2,
\end{aligned}
\end{equation*}
moreover, it suffices to bound the formula from above that for every $\phi\in[0,\pi/2]$,
\begin{equation*}
\begin{aligned}
  \int_{-2R}^0 &  \int_{r/2}^{2R}\frac{1}{1+\mu \xi_2^2
  (\xi_1\cos\phi+\xi_2\sin\phi)^2/(8R^3)}\mathrm{d}\xi_1\mathrm{d}\xi_2
  \\ & = \int_0^{2R} \int_{r/2}^{2R} \frac{1}{1+\mu \xi_2^2 (\xi_1\cos\phi
  - \xi_2\sin\phi)^2/(8R^3)}\mathrm{d}\xi_1\mathrm{d}\xi_2 \\
  & \triangleq \widetilde{H}(\mu,\phi).
\end{aligned}
\end{equation*}
We shall divide into several cases according to $\phi$. First for the endpoint case $\phi=0$, we directly have
\begin{align}
  \widetilde{H}(\mu,0 )& = \int_0^{2R} \int_{r/2}^{2R} \frac{1}{1+\mu \xi_2^2 \xi_1^2/(8R^3)}\mathrm{d}\xi_1\mathrm{d}\xi_2 \nonumber \\
  & \leq 2R \int_0^{2R}\frac{1}{1+\mu \xi_2^2 (r^2/4)/(8R^3)}\mathrm{d}\xi_2 \nonumber \\
  & \lesssim R (\mu r^2/R^3)^{-\frac{1}{2}}\int_0^\infty\frac{1}{1+\tilde\xi_2^2}\mathrm{d}\tilde\xi_2 \lesssim \frac{R^{5/2}}{r}\mu^{-\frac{1}{2}}.
\end{align}
If $\phi$ is close to 0 so that $\frac{r}{2}\cos\phi-2R\sin\phi\geq \frac{r}{4}\cos\phi$, that is, $\phi\in ]0,\arctan(\frac{r}{8R})]$,
we similarly obtain
\begin{equation}
\begin{aligned}
  \widetilde{H}(\mu,\phi) & \leq \int_0^{2R} \int_{r/2}^{2R} \frac{1}{1+\mu \xi_2^2 (r\cos\phi/4)^2/(8R^3)}\mathrm{d}\xi_1\mathrm{d}\xi_2 \\
  & \lesssim \frac{R^{5/2}}{r}\mu^{-\frac{1}{2}}.
\end{aligned}
\end{equation}
For every $\phi\in [\arctan(\frac{r}{8R}),\frac{\pi}{4}]$, if $\xi_2\in [0,r/4]$, we find that $\xi_1-\xi_2 \tan\phi \geq
\frac{r}{2}-\frac{r}{4}\tan(\frac{\pi}{4})=\frac{r}{4}$, thus we get
\begin{equation}
\begin{aligned}
  \widetilde{H}(\mu,\phi)& \leq \int_0^{2R} \int_{r/2}^{2R} \frac{1}{1+\mu \xi_2^2
  (\cos\phi)^2(\xi_1-\xi_2\tan\phi)^2/(8R^3)}\mathrm{d}\xi_1\mathrm{d}\xi_2 \\
  & \lesssim R\int_0^{\frac{r}{4}}\frac{1}{1+\mu \xi_2^2 (\frac{r}{4})^2/(16 R^3)}\mathrm{d}\xi_2
  + \int_{\frac{r}{4}}^{2R} \int_{r/2}^{2R} \frac{1}{1+\mu (\frac{r}{4})^2(\xi_1-\xi_2\tan\phi)^2/(16R)^3 }\mathrm{d}\xi_1\mathrm{d}\xi_2 \\
  & \lesssim \frac{R^{5/2}}{r}\mu^{-\frac{1}{2}} + R \int_{\mathbb{R}}\frac{1}{1+\mu (\frac{r}{4})^2\tilde\xi_1^2/(16R)^3 }\mathrm{d}\tilde\xi_1
  \lesssim \frac{R^{5/2}}{r}\mu^{-\frac{1}{2}}.
\end{aligned}
\end{equation}
For the other endpoint case $\phi=\pi/2$, we directly obtain
\begin{equation}
  \widetilde{H}(\mu,\frac{\pi}{2})\leq \int_0^{2R} \int_{r/2}^{2R}\frac{1}{1+\mu \xi_2^4/(8R^3)} \mathrm{d}\xi_1\mathrm{d}\xi_2
  \lesssim R (\mu/R^3)^{-1/4} \int_{\mathbb{R}}\frac{1}{1+\tilde\xi_2^4}\mathrm{d}\tilde\xi_2\lesssim R^{7/4}\mu^{-\frac{1}{4}}.
\end{equation}
Now we consider the case $\phi\in [\phi_0,\pi/2[$, where $\phi_0\in [\pi/4,\pi/2[$ is a number chosen later. Noticing that
\begin{equation}\label{eq H1}
  \widetilde{H}(\mu,\phi)\leq \int_0^{2R} \int_{r/2}^{2R} \frac{1}{1+\mu \xi_2^2(\xi_1\cot\phi
  - \xi_2)^2/(16R^3)}\mathrm{d}\xi_1\mathrm{d}\xi_2,
\end{equation}
and $\xi_1\geq r/2$, if $\xi_2\leq (\cot\phi) r/4$, we observe that $\xi_1\cot\phi-\xi_2\geq (\cot\phi) (r/4)>0$, thus
\begin{equation*}
\begin{aligned}
  I & \triangleq \int_0^{\frac{r}{4}\cot\phi} \int_{r/2}^{2R}\frac{1}{1+\mu \xi_2^2(\xi_1\cot\phi
  - \xi_2)^2/(16R^3)}\mathrm{d}\xi_1\mathrm{d}\xi_2 \\
  & \leq 2R \int_0^{\frac{r}{4}\cot\phi}\frac{1}{1+\mu \xi_2^2 (\cot\phi)^2(r/4)^2/(16R^3)}\mathrm{d}\xi_2 \\
  & \lesssim \frac{ R^{5/2}}{r\mu^{1/2} \cot\phi} \int_0^{\frac{\mu^{1/2}r^2(\cot\phi)^2}{64R^{3/2}}}
  \frac{1}{1+ \tilde\xi_2^2}\mathrm{d} \tilde\xi_2 \\
  & \lesssim R^{\frac{7}{4}}\mu^{-\frac{1}{4}} \frac{8 R^{3/4}}{\mu^{1/4}r \cot\phi}
  \arctan\Big(\frac{\mu^{1/2}r^2(\cot\phi)^2}{64R^{3/2}}\Big).
\end{aligned}
\end{equation*}
Since $\lim_{x\rightarrow 0+}\frac{\arctan(x^2)}{x}=\lim_{x\rightarrow 0+}\frac{2x}{1+x^2}=0$, there exists an absolute positive
constant $c_0$ such that for every $x\in ]0,c_0]$, we get $\frac{\arctan(x^2)}{x}\leq 1$.
Thus in order to find some $\phi_0\in [\frac{\pi}{4},\frac{\pi}{2}[$ satisfying $\frac{r\mu^{1/4}\cot(\phi_0)}{8R^{3/4}}\leq c_0$,
we only need to choose
\begin{equation*}
  \phi_0\triangleq \max\Big\{\frac{\pi}{4},\arctan\Big(\frac{\mu^{1/4}r}{8c_0 R^{3/4}}\Big)\Big\},
\end{equation*}
then for every $\phi\in [\phi_0,\pi/2[$, we have
\begin{equation*}
  I\lesssim R^{7/4} \mu^{-\frac{1}{4}}.
\end{equation*}
If $\xi_2\geq 4R\cot\phi$, then we find that $|\xi_1\cot\phi-\xi_2|\geq \xi_2-2R\cot\phi\geq \xi_2/2$, thus
\begin{equation*}
\begin{aligned}
  II & \triangleq \int_{4R\cot\phi}^{\infty} \int_{r/2}^{2R}\frac{1}{1+\mu \xi_2^2(\xi_1\cot\phi
  - \xi_2)^2/(16R^3)}\mathrm{d}\xi_1\mathrm{d}\xi_2  \\
  & \leq 2R \int_0^\infty \frac{1}{1+\mu\tilde\xi_2^4/(64R^3)}\mathrm{d} \tilde\xi_2 \lesssim R^{7/4} \mu^{-\frac{1}{4}}.
\end{aligned}
\end{equation*}
If $\xi_2\in [(\cot\phi)r/4,4R\cot\phi]$, noting that $\cot\phi\leq\cot\phi_0 \leq \frac{8c_0R^{3/4}}{r}\mu^{-1/4}$, we have
\begin{equation*}
\begin{aligned}
  III & \triangleq \int^{4R\cot\phi}_{r(\cot\phi)/4} \int_{r/2}^{2R}\frac{1}{1+\mu \xi_2^2(\xi_1\cot\phi
  - \xi_2)^2/(16R^3)}\mathrm{d}\xi_1\mathrm{d}\xi_2  \\
  & \leq (2R) (4R\cot\phi) \lesssim \frac{R^{11/4}}{r} \mu^{-\frac{1}{4}}.
\end{aligned}
\end{equation*}
Hence in the case of $\phi\in [\phi_0,\pi/2[$, we have
\begin{equation}
  \widetilde{H}(\mu,\phi)\leq I+II+III\lesssim \frac{R^{11/4}}{r}\mu^{-\frac{1}{4}}.
\end{equation}
Finally it remains to consider the case $\phi\in [\pi/4,\phi_0]$. Also noticing that \eqref{eq H1} and $\xi_1\geq r/2$,
if $\xi_2\leq r(\cot\phi_0)/4$, we know that $\xi_1\cot\phi-\xi_2\geq (r/2)\cot\phi_0-r(\cot\phi_0)/4=r(\cot\phi_0)/4$,
and combining with the fact that $ r\cot\phi_0=\min\{r , 8c_0R^{3/4}\mu^{-1/4}\}$, we have
\begin{equation*}
\begin{aligned}
  \mathrm{I}_1 &\triangleq \int_0^{r(\cot\phi_0)/4} \int_{r/2}^{2R}\frac{1}{1+\mu \xi_2^2(\xi_1\cot\phi
  - \xi_2)^2/(16R^3)}\mathrm{d}\xi_1\mathrm{d}\xi_2 \\
  & \leq  2R\int_0^\infty \frac{1}{1+\mu\xi_2^2(\min\{r/4,2c_0R^{3/4}\mu^{-1/4}\})^2/(16R^3)}\mathrm{d}\xi_2 \\
  & \lesssim \max\Big\{\frac{R^{5/2}}{r}\mu^{-\frac{1}{2}},R^{7/4} \mu^{-\frac{1}{4}}\Big\}.
\end{aligned}
\end{equation*}
Otherwise, if $\xi_2\geq r(\cot\phi_0)/4= \min\{r/4, 2c_0R^{3/4}\mu^{-1/4}\}$, we infer that
\begin{equation*}
\begin{aligned}
  \mathrm{I}_2 & \triangleq \int_{r(\cot\phi_0)/4}^{2R} \int_{r/2}^{2R}
  \frac{1}{1+\mu \xi_2^2 (\xi_1\cot\phi- \xi_2)^2/(16R^3)}\mathrm{d}\xi_1\mathrm{d}\xi_2 \\
  & \leq R\int_{\mathbb{R}} \frac{1}{1+\mu
  (r(\cot\phi_0)/4)^2 \tilde\xi_{2}^2/(16 R^3)}\mathrm{d}\tilde\xi_{2}\\
  & \lesssim\max\Big\{\frac{R^{5/2}}{r}\mu^{-\frac{1}{2}},R^{7/4} \mu^{-\frac{1}{4}}\Big\}.
\end{aligned}
\end{equation*}
Therefore in the case of $\phi\in[\pi/4,\phi_0]$, we have
\begin{equation}
  \widetilde{H}(\mu,\phi)\leq \mathrm{I}_1+\mathrm{I}_2\lesssim
  \max\Big\{\frac{R^{5/2}}{r}\mu^{-\frac{1}{2}},R^{7/4} \mu^{-\frac{1}{4}}\Big\}.
\end{equation}
This finishes the proof of this Lemma.
\end{proof}

Next we are devoted to proving Proposition \ref{prop Strich} based on Lemma \ref{lem disp}.
\begin{proof}[Proof of Proposition \ref{prop Strich}]
  Noting that
\begin{equation*}
\begin{aligned}
  \mathcal{G}^A(t)g(x)& =\int_{\mathbb{R}^2}\Psi(\xi)\widehat{g}(\xi) e^{iAt a(\xi)-
  \nu t|\xi|^\alpha+ i x\cdot\xi}\mathrm{d}\xi \\
  & = K(t,At,\cdot)\ast g(x),
\end{aligned}
\end{equation*}
where $K$ is defined by \eqref{eq K}, we apply Lemma \ref{lem disp} to obtain
\begin{equation*}
  \|\mathcal{G}^A(t)g\|_{L^{\infty}_x}\lesssim_{r,R}  (At)^{-1/4} e^{-\nu r^\alpha t/4}\|g\|_{L^1}.
\end{equation*}
On the other hand, by the Planchrel theorem, we find
\begin{equation*}
  \|\mathcal{G}^A(t) g\|_{L^2_x}\leq e^{-\nu r^{\alpha} t/2}\|g\|_{L^2}.
\end{equation*}
Thus from interpolation, we have the following dispersive estimates that for every $q\in [2,\infty]$ and $t\in\mathbb{R}^+$,
\begin{equation}\label{eq disp2}
  \|\mathcal{G}^A(t) g\|_{L^q_x} = \| K(t,At,\cdot)\ast g(x)\|_{L^q_x} \lesssim_{r,R} (At)^{-\frac{q-2}{4q}}
  e^{- r^\alpha\nu t/4}\|g\|_{L^{q'}},
\end{equation}
where $q'\triangleq\frac{q}{q-1}$ is the dual number of $q$.

Now we shall use a classical duality method, also called as $TT^*$-method, to show the expected estimates.
For every $q\in [2,\infty]$, denoting by
\begin{equation*}
  \mathcal{U}_q \triangleq \{\varphi\in \mathcal{D}(\mathbb{R}^+\times \mathbb{R}^2):
  \|\varphi\|_{L^\infty(\mathbb{R}^+; L^{q'}(\mathbb{R}^2))}\leq 1\},
\end{equation*}
we have
\begin{equation*}
\begin{aligned}
  \|\mathcal{G}^A(t)g\|_{L^1(\mathbb{R}^+;L^q)} & = \sup_{\varphi\in\mathcal{U}_q }
  \Big|\int_{\mathbb{R}^+}\big\langle\mathcal{G}^A(t)g(x),\varphi(t,x)\big\rangle_{L^2_x}\mathrm{d}t\Big| \\
  & = \sup_{\varphi\in\mathcal{U}_q }\Big|\int_{\mathbb{R}^2} \int_{\mathbb{R}^+}\widehat{g}(\xi) \Psi(\xi)
  e^{iAt a(\xi)-\nu t|\xi|^\alpha}\overline{\widehat{\varphi}}(t,\xi)\mathrm{d}t\mathrm{d}\xi\Big| \\
  & \leq \|g\|_{L^2} \sup_{\varphi\in\mathcal{U}_q } \Big\|
  \int_{\mathbb{R}^+}\Psi(\xi)\overline{\widehat\varphi}(t,\xi)
  e^{iAt a(\xi)-\nu t|\xi|^\alpha}\mathrm{d}t\Big\|_{L^2_\xi}.
\end{aligned}
\end{equation*}
By virtue of the Plancherel theorem, the H\"older inequality and \eqref{eq disp2}, we obtain
\begin{align}
  &\Big\| \int_{\mathbb{R}^+}\Psi(\xi)\overline{\widehat\varphi}(t,\xi) e^{iAt a(\xi)-\nu t|\xi|^\alpha}
  \mathrm{d}t\Big\|^2_{L^2_\xi} \nonumber \\
  =& \int_{\mathbb{R}_\xi^2}\int_{(\mathbb{R}^+)^2}\Psi(\xi)\overline{\widehat{\varphi}}(t,\xi)e^{itA a(\xi)-\nu t|\xi|^\alpha}
  \overline\Psi(\xi) \widehat{\varphi}(\tau,\xi)e^{-i\tau A a(\xi)-\nu \tau |\xi|^\alpha}\mathrm{d}t\,\mathrm{d}\tau\,\mathrm{d}\xi
  \nonumber \\
  =& \int_{\mathbb{R}^+}\int_{\mathbb{R}^+ } \Big\langle \Psi(\xi)\widehat\varphi(\tau,\xi)e^{i(t-\tau ) A
  a(\xi)-\nu(t+\tau)|\xi|^\alpha}, \Psi(\xi)\widehat{\varphi}(t,\xi) \Big\rangle_{L^2_\xi} \,\mathrm{d}\tau\,\mathrm{d}t\nonumber \\
  =& \int_{\mathbb{R}^+}\int_0^t  \Big\langle K(t+\tau,(t-\tau) A,\cdot)\ast
  \varphi(\tau, x),(\mathcal{F}^{-1}\Psi)\ast\varphi(t,x) \Big\rangle_{L^2_x}\,\mathrm{d}\tau\,\mathrm{d}t\, \nonumber \\
  & + \int_{\mathbb{R}^+}\int_t^\infty  \Big\langle(\mathcal{F}^{-1}\Psi)\ast\varphi(\tau,x), K(t+\tau,(\tau-t) A,\cdot)\ast
  \varphi(t, x) \Big\rangle_{L^2_x}\,\mathrm{d}\tau\,\mathrm{d}t\, \nonumber \\
  \leq & \, C_{r,R} \int_{(\mathbb{R}^+)^2} \Big(\frac{1}{A|t-\tau|}\Big)^{\frac{q-2}{4q}}
  e^{-\frac{r^\alpha \nu (t+\tau)}{4}}
  \|\varphi\|^2_{L^\infty(\mathbb{R}^+; L^{q'}(\mathbb{R}^2))} \,\mathrm{d}\tau\,\mathrm{d}t \nonumber.
\end{align}
Since for every $q\in[2,\infty]$, $\varphi\in\mathcal{U}_q $ and
\begin{equation*}
  \int_{(\mathbb{R}^+)^2}\Big(\frac{1}{|t-\tau|}\Big)^{\frac{q-2}{4q}}
  e^{-\frac{r^\alpha \nu (t+\tau)}{4}}\mathrm{d}\tau \mathrm{d}t \leq C_{\nu,q },
\end{equation*}
we get
\begin{equation*}
  \|\mathcal{G}^A(t)g\|_{L^1(\mathbb{R}^+;L^q)}\lesssim_{r,R,q,\nu}
  A^{-\frac{q-2}{8q}} \|g\|_{L^2},\qquad \forall\,q\in [2,\infty].
\end{equation*}
By the Bernstein inequality and the Plancherel theorem, we also have
\begin{equation*}
  \|\mathcal{G}^A(t)g\|_{L^\infty(\mathbb{R}^+;L^q )}\lesssim_R \|\mathcal{G}^A(t)g\|_{L^\infty(\mathbb{R}^+;L^2)}
  \lesssim_{R} \|g\|_{L^2}.
\end{equation*}
From interpolation, we infer that for every $p\in [1,\infty]$ and $q\in [2,\infty]$
\begin{equation*}
  \|\mathcal{G}^A(t)g\|_{L^p(\mathbb{R}^+; L^q )}\lesssim_{r,R,q,p,\nu} A^{-\frac{1}{8p}\frac{q-2}{q}}  \|g\|_{L^2}.
\end{equation*}

\end{proof}

\section{Proof of Theorem \ref{thm conver}}\label{sec conv}
\setcounter{section}{4}\setcounter{equation}{0}

This section is dedicated to the proof of the global existence and convergence of weak solutions to
the dispersive dissipative quasi-geostrophic equation \eqref{eq 1}.

Since $\bar\theta(t,x_2)$ solving \eqref{eq limit} is globally and uniquely defined, we only need to consider the difference $\Theta^A(t,x)\triangleq \theta^A(t,x)-\bar\theta(t,x_2)$, with the associated difference equation formally given by
\begin{equation}\label{eq PddQG}
\begin{cases}
  \partial_t \Theta^A + (\mathcal{R}^\perp\Theta^A)\cdot\nabla\Theta^A - (\mathcal{H}\bar\theta)\,\partial_1\Theta^A +
  \nu|D|^\alpha \Theta^A + A (\mathcal{R}_1 \Theta^A)= - (\mathcal{R}_1\Theta^A)\,\partial_2 \bar\theta, \\
  \Theta^A(0,x)= \tilde\theta_0(x),
\end{cases}
\end{equation}
where $\mathcal{H}$ is the usual Hilbert transform in $\mathbb{R}_{x_2}$.
Note that we have used the following facts that
$\partial_1\bar\theta=0$,
$\mathcal{R}_1\bar\theta=(-|D|^{-1}\partial_1)\bar\theta=0$ and
$$\mathcal{R}_2\bar\theta (x) = \int_{\mathbb{R}^2_\xi} e^{i x\cdot\xi}
\Big(-i\frac{\xi_2}{|\xi|}\Big)\widehat{\bar\theta}(\xi_2)\delta(\xi_1)\mathrm{d}\xi = \int_{\mathbb{R}} e^{ix_2\xi_2}
\Big(-i\frac{\xi_2}{|\xi_2|}\Big)\widehat{\bar\theta}(\xi_2)\mathrm{d}\xi_2 = \mathcal{H}\bar\theta(x_2), $$
and
$$|D|^\alpha\bar\theta(x)=\int_{\mathbb{R}^2}e^{i x\cdot\xi}|\xi|^\alpha\widehat{\bar\theta}(\xi_2)\delta(\xi_1)\mathrm{d}\xi
=\int_{\mathbb R}e^{i x_2\xi_2}|\xi_2|^\alpha\widehat{\bar\theta}(\xi_2)\mathrm{d}\xi_2=|D_2|^\alpha\bar\theta(x_2),$$
with $\delta(\cdot)$ the Dirac-$\delta$ function.

\subsection{Existence of solutions to the perturbed equation \eqref{eq PddQG}}\label{subsec Exi-P}

We first consider the a priori estimates. By taking the $L^2$ inner product of \eqref{eq PddQG} with $\Theta^A$, integration by parts,
and from \eqref{eq fact1} and the fact that $\nabla\cdot (\mathcal{R}^\perp\Theta^A)=0$ and $\partial_1 (\mathcal{H}\bar\theta(x_2))=0$, we get
\begin{align*}
  \frac{1}{2}\frac{d}{dt}\|\Theta^A(t)\|_{L^2}^2 + \nu\big\||D|^{\frac{\alpha}{2}}\Theta^A (t)\big\|_{L^2}^2 =
  -\int_{\mathbb{R}^2}\mathcal{R}_1\Theta^A(t,x)\partial_2\bar\theta(t,x_2)\cdot\Theta^A(t,x)\mathrm{d}x.
\end{align*}
From the H\"older inequality, Sobolev embedding ($\dot H^{\frac{\alpha}{2}}(\mathbb{R})\hookrightarrow L^{\frac{2}{1-\alpha}}(\mathbb{R})$)
and the Calder\'on-Zygmund theorem, we obtain
\begin{align*}
  \frac{1}{2}\frac{d}{dt}\|\Theta^A(t)\|_{L^2}^2 + \nu \big\||D|^{\frac{\alpha}{2}}\Theta^A (t)\big\|_{L^2}^2 & \leq
  \|\mathcal{R}_1\Theta^A(t)\|_{L^{2,2/(1-\alpha)}_{x_1,x_2}}\|\partial_2\bar\theta(t)\|_{L^{2/\alpha}_{x_2}}
  \|\Theta^A(t)\|_{L^2} \\
  & \leq C \big\||D_2|^{\frac{\alpha}{2}}\Theta^A(t)\big\|_{L^2}\|\partial_2\bar\theta(t)\|_{L^{2/\alpha}_{x_2}}\|\Theta^A(t)\|_{L^2} \\
  & \leq C \big\||D|^{\frac{\alpha}{2}}\Theta^A(t)\big\|_{L^2} \|\partial_2\bar\theta(t)\|_{L^{2/\alpha}_{x_2}}\|\Theta^A(t)\|_{L^2}.
\end{align*}
Using the Young inequality, we further have
\begin{align*}
  \frac{1}{2}\frac{d}{dt}\|\Theta^A(t)\|_{L^2}^2 + \frac{\nu}{2} \big\||D|^{\frac{\alpha}{2}}\Theta^A (t)\big\|_{L^2}^2 & \leq
  \frac{ C}{\nu}\|\partial_2\bar\theta(t)\|_{L^{2/\alpha}_{x_2}}^2 \|\Theta^A(t)\|_{L^2}^2.
\end{align*}
Gronwall's inequality ensures that
\begin{align*}
  \|\Theta^A(t)\|_{L^2}^2+ \nu\int_0^t\|\Theta^A(\tau)\|^2_{\dot H^{\frac{\alpha}{2}}}\mathrm{d}\tau
  \leq \|\tilde\theta_0\|^2_{L^2} \exp \Big\{\frac{C}{\nu}\|\partial_2\bar\theta\|_{L^2_t L^{2/\alpha}}^2\Big\}.
\end{align*}
From the Sobolev embedding ($\dot H^{\frac{1-\alpha}{2}}(\mathbb{R})\hookrightarrow L^{\frac{2}{\alpha}}(\mathbb{R})$)
and the energy-type estimate of the linear dissipative equation \eqref{eq limit}, we find
\begin{align}\label{eq fact3}
  \|\partial_2\bar\theta\|^2_{L^2_t L^{2/\alpha}}\lesssim \|\bar\theta\|_{L^2_t \dot H^{3/2-\alpha/2}}^2\lesssim \nu^{-1}
  \|\bar\theta_0\|_{H^{3/2-\alpha}}^2.
\end{align}
Hence, we finally obtain that for every $t\in\mathbb{R}^+$
\begin{align}\label{eq EgyE}
  \|\Theta^A(t)\|_{L^2}^2+ \nu\int_0^t\|\Theta^A(\tau)\|^2_{\dot H^{\frac{\alpha}{2}}}\mathrm{d}\tau
  \leq \|\tilde\theta_0\|^2_{L^2}\exp\Big\{\frac{C}{\nu^2}\|\bar\theta_0\|_{H^{3/2-\alpha}}^2\Big\} .
\end{align}

Next we sketch the proof of the global existence of solution to \eqref{eq PddQG}. We have the following approximate system
\begin{equation}\label{eq app1}
\begin{cases}
  \partial_t \Theta_\epsilon^A + (\mathcal{R}^\perp\Theta_\epsilon^A)\cdot\nabla\Theta_\epsilon^A -
  (\mathcal{H}\bar\theta_\epsilon)\,\partial_1\Theta_\epsilon^A +
  \nu|D|^\alpha \Theta_\epsilon^A + A (\mathcal{R}_1 \Theta_\epsilon^A)- \epsilon \Delta \Theta^A_\epsilon
  = - (\mathcal{R}_1\Theta_\epsilon^A)\,\partial_2 \bar\theta_\epsilon, \\
  \partial_t \bar \theta_\epsilon + \nu|D_2|^\alpha \bar \theta_\epsilon =0, \\
  \Theta^A(0,x)= \varphi_\epsilon\ast\tilde\theta_0(x),\quad \bar\theta_\epsilon(0,x_2) =\tilde\varphi_\epsilon\ast\bar\theta_0(x_2),
\end{cases}
\end{equation}
where $\varphi_\epsilon(x) = \epsilon^{-2}\varphi(x/\epsilon)$ and $\varphi\in \mathcal{D}(\mathbb{R}^2)$ satisfies
$\int_{\mathbb{R}^2}\varphi=1$, $\tilde\varphi_\epsilon(x_2)=\epsilon^{-1}\tilde\varphi(x_2/\epsilon)$ and
$\tilde\varphi\in \mathcal{D}(\mathbb{R})$ satisfies
$\int_{\mathbb{R}}\tilde\varphi=1$. Let $m>2$ and $m\in\mathbb{Z}^+$, and fix $\epsilon>0$. Since
$\|\varphi_\epsilon\ast\tilde\theta_0\|_{H^m}\lesssim_\epsilon \|\tilde\theta_0\|_{L^2}$ and
$\|\tilde\varphi_\epsilon\ast\bar\theta_0\|_{H^m_{x_2}}\lesssim_\epsilon \|\bar\theta_0\|_{L_{x_2}^2}$,
and since $-\epsilon\Delta\Theta^A_\epsilon$
is the subcritical dissipation, from the standard energy method we find that for all $T>0$
\begin{align*}
  \sup_{t\in[0,T]}\|\Theta^A_\epsilon(t)\|_{H^m} \leq C(\epsilon,T,\varphi,\tilde\varphi,\|\tilde\theta_0\|_{L^2}, \|\bar\theta_0\|_{L^2}).
\end{align*}
This estimate combined with a Galerkin approximation process yields the global existence of a strong solution
$(\Theta^A_\epsilon,\bar\theta_\epsilon)$ to \eqref{eq app1}. Furthermore, from \eqref{eq EgyE} and the estimation
$\|\varphi_\epsilon\ast f\|_{H^s}\leq \|f\|_{H^s}$, $\forall s\in\mathbb{R}$, we have the uniform energy inequality
with respect to $\epsilon$ that for all $T>0$
\begin{equation}\label{eq EgyE1}
\begin{aligned}
  \|\Theta_\epsilon^A(T)\|_{L^2}^2+ \nu\int_0^T\|\Theta_\epsilon^A(s)\|^2_{\dot H^{\frac{\alpha}{2}}}\mathrm{d}s
  & \leq \|\varphi_\epsilon\ast\tilde\theta_0\|^2_{L^2}\exp\Big\{\frac{C}{\nu^2}\|\tilde\varphi_\epsilon\ast\bar\theta_0\|_{H^{3/2-\alpha}}^2\Big\} \\
  & \leq \|\tilde\theta_0\|^2_{L^2}\exp\Big\{\frac{C}{\nu^2}\|\bar\theta_0\|_{H^{3/2-\alpha}}^2\Big\}.
\end{aligned}
\end{equation}
Hence this ensures that, up to a subsequence, $\Theta^A_\epsilon$ converges weakly (or weakly-$*$) to a function $\Theta^A$ in
$L^{\infty}_T L^2\cap L^2_T \dot H^{\alpha/2}$. Similarly as the case of the dissipative quasi-geostrophic equation,
from the compactness argument, we further get that as $\epsilon$ tends to 0,
\begin{equation*}
\begin{cases}
  \Theta^A_\epsilon\rightarrow \Theta^A, \\
  \mathcal{R}_j \Theta^A_\epsilon \rightarrow \mathcal{R}_j\Theta^A, j=1,2,
\end{cases}
\mathrm{strongly\;in\;} L^2([0,T]; L^2_{\mathrm{loc}}(\mathbb{R}^2)).
\end{equation*}
Since $\bar\theta_0\in H^{3/2-\alpha}(\mathbb{R})$, it is clear to see that $\bar\theta_\epsilon$ strongly converges to $\bar\theta=e^{-\nu t|D_2|}\bar\theta_0$
in $L^\infty\big([0,T]; H^{\frac{3}{2}-\alpha}(\mathbb{R})\big)$. Therefore we can pass to the limit in \eqref{eq app1} to show that
$\Theta^A$ is a weak solution of \eqref{eq PddQG}.

\subsection{Proof of \eqref{eq conv}.}
Now we show the strong convergence of $\Theta^A$ by using the Strichartz-type estimate \eqref{eq Stri}.
To this end, we introduce the following cutoff operator
\begin{align}\label{eq I_rR}
  \mathcal{I}_{r,R}=\mathcal{I}_{r,R}(D)\triangleq \chi\big(|D|/R\big)\big(\mathrm{Id}-\chi(|D_1|/r)\big),
\end{align}
where $0<r<R$ and $\chi\in \mathcal{D}(\mathbb{R})$ satisfies that
$\chi(x)\equiv 1$ for all $|x|\leq 1$ and $\chi$ is compactly supported in $\{x : |x|< 2\}$.
Then for the term $\mathcal{I}_{r,R}\Theta^A$, we have the following estimation (with its proof placed in the end of this subsection).
\begin{lemma}\label{lem StrApp}
  Let $r,R$ be two positive numbers satisfying $r<R$. Then for every $T>0$ and $\sigma\in ]2,\infty[$,
there exists an absolute constant $\widetilde{C}$ depending on $r,R,T,\sigma,\nu$, $\|\bar\theta_0\|_{H^{\frac{3}{2}-\alpha}}$ and $\|\tilde\theta_0\|_{L^2}$
but independent of $A$ such that
\begin{align}\label{eq StrApp}
  \|\mathcal{I}_{r,R}\Theta^A\|_{L^2([0,T]; L^\sigma(\mathbb{R}^2))}\leq \widetilde{C} A^{-\frac{1}{16}(1-\frac{2}{\sigma})}.
\end{align}
\end{lemma}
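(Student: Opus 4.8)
The plan is to represent $\Theta^A$ through the Duhamel formula for the linear dispersive dissipative semigroup $\mathcal{G}^A$ and then invoke the inhomogeneous Strichartz estimate stated in the Corollary following Proposition~\ref{prop Strich}. Writing the perturbed equation \eqref{eq PddQG} as $\partial_t\Theta^A+\nu|D|^\alpha\Theta^A+A\mathcal{R}_1\Theta^A=F^A$ with source
\[
  F^A\triangleq -(\mathcal{R}^\perp\Theta^A)\cdot\nabla\Theta^A+(\mathcal{H}\bar\theta)\,\partial_1\Theta^A-(\mathcal{R}_1\Theta^A)\,\partial_2\bar\theta,
\]
Duhamel's principle (as in \eqref{eq LINEXP}) gives $\Theta^A(t)=\mathcal{G}^A(t)\tilde\theta_0+\int_0^t\mathcal{G}^A(t-s)F^A(s)\,\mathrm{d}s$. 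Since $\mathcal{I}_{r,R}$ is a Fourier multiplier it commutes with $\mathcal{G}^A$ and with the time integral, and both $\mathcal{I}_{r,R}\tilde\theta_0$ and $\mathcal{I}_{r,R}F^A$ have spatial Fourier support in $\{|\xi_1|\geq r,\ |\xi|\leq 2R\}\subset\mathcal{B}_{r,2R}$. Hence, applying the Corollary to Proposition~\ref{prop Strich} with the pair $(r,2R)$ and with $p=2$, $q=\sigma$, and restricting the source to $[0,T]$ so that it belongs to $L^1(\mathbb{R}^+;L^2)$, one obtains
\[
  \|\mathcal{I}_{r,R}\Theta^A\|_{L^2([0,T];L^\sigma)}\leq C\,A^{-\frac{1}{16}(1-\frac{2}{\sigma})}\Big(\|\mathcal{I}_{r,R}\tilde\theta_0\|_{L^2}+\|\mathcal{I}_{r,R}F^A\|_{L^1([0,T];L^2)}\Big),
\]
which produces exactly the exponent in \eqref{eq StrApp}. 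It therefore remains to bound the two factors on the right uniformly in $A$.

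The factor $\|\mathcal{I}_{r,R}\tilde\theta_0\|_{L^2}\leq\|\tilde\theta_0\|_{L^2}$ is immediate from the $L^2$-boundedness of $\mathcal{I}_{r,R}$. For $F^A$ I would estimate the three contributions separately; the crucial point is that $\mathcal{I}_{r,R}$ caps frequencies at $|\xi|\leq 2R$, so by Bernstein's inequality (Lemma~\ref{lem Bern}) it supplies the smoothing needed to absorb the spatial derivatives in the transport terms, despite the fact that only the energy-level control of $\Theta^A$ is available. For the advection term I would use the divergence form $(\mathcal{R}^\perp\Theta^A)\cdot\nabla\Theta^A=\mathrm{div}\big((\mathcal{R}^\perp\Theta^A)\Theta^A\big)$, valid since $\mathrm{div}(\mathcal{R}^\perp\Theta^A)=0$; as $\|(\mathcal{R}^\perp\Theta^A)\Theta^A\|_{L^1}\lesssim\|\Theta^A\|_{L^2}^2$, the operator $\mathcal{I}_{r,R}\,\mathrm{div}$ gains a factor $R^2$ mapping $L^1\to L^2$, giving $\|\mathcal{I}_{r,R}[(\mathcal{R}^\perp\Theta^A)\cdot\nabla\Theta^A]\|_{L^2}\lesssim R^2\|\Theta^A\|_{L^2}^2$. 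Likewise, since $\mathcal{H}\bar\theta$ depends only on $x_2$ I would write $(\mathcal{H}\bar\theta)\partial_1\Theta^A=\partial_1\big((\mathcal{H}\bar\theta)\Theta^A\big)$, use $H^{\frac{3}{2}-\alpha}(\mathbb{R})\hookrightarrow L^\infty(\mathbb{R})$ (valid because $\alpha<1$) and the $H^s$-boundedness of $\mathcal{H}$ to get $\|(\mathcal{H}\bar\theta)\Theta^A\|_{L^2}\lesssim\|\bar\theta\|_{H^{\frac{3}{2}-\alpha}}\|\Theta^A\|_{L^2}$, and then gain one factor $R$ from $\mathcal{I}_{r,R}\partial_1$.

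Integrating these over $[0,T]$ and invoking the uniform energy bound \eqref{eq EgyE} for $\|\Theta^A\|_{L^\infty_TL^2}$ together with the $L^\infty_TH^{\frac{3}{2}-\alpha}$ bound on $\bar\theta$ coming from \eqref{eq limit} yields constants depending on $r,R,T,\nu,\|\tilde\theta_0\|_{L^2},\|\bar\theta_0\|_{H^{\frac{3}{2}-\alpha}}$ but not on $A$. The third term carries no derivative on $\Theta^A$, so I would use only the $L^2$-boundedness of $\mathcal{I}_{r,R}$ and reproduce the mixed-norm Hölder estimate from the energy argument, namely $\|(\mathcal{R}_1\Theta^A)\partial_2\bar\theta\|_{L^2}\lesssim\||D|^{\frac{\alpha}{2}}\Theta^A\|_{L^2}\|\partial_2\bar\theta\|_{L^{2/\alpha}_{x_2}}$ (via $\dot H^{\frac{\alpha}{2}}(\mathbb{R})\hookrightarrow L^{2/(1-\alpha)}(\mathbb{R})$ in $x_2$ and the Calderón--Zygmund theorem), so that Cauchy--Schwarz in time gives $\|(\mathcal{R}_1\Theta^A)\partial_2\bar\theta\|_{L^1_TL^2}\lesssim\|\Theta^A\|_{L^2_T\dot H^{\alpha/2}}\|\partial_2\bar\theta\|_{L^2_TL^{2/\alpha}}$, both factors being controlled uniformly in $A$ by \eqref{eq EgyE} and \eqref{eq fact3}. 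Collecting the three estimates bounds $\|\mathcal{I}_{r,R}F^A\|_{L^1_TL^2}$ by a constant independent of $A$, which closes the argument.

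The main obstacle I anticipate is the quadratic advection term: $\Theta^A$ is controlled only at the energy level $L^\infty_TL^2\cap L^2_T\dot H^{\alpha/2}$, so $\nabla\Theta^A$ is not directly in $L^2$, and it is essential to combine the divergence structure with the Bernstein smoothing furnished by the cutoff $\mathcal{I}_{r,R}$. A secondary technical point is the rigorous justification of the Duhamel identity: one should strictly derive \eqref{eq StrApp} first for the regularized solutions $\Theta^A_\epsilon$ of \eqref{eq app1} (where the extra term $-\epsilon\Delta$ only reinforces the dissipative decay, so the Strichartz estimate persists uniformly in $\epsilon$) and then pass to the limit $\epsilon\to 0$ using the convergence and lower semicontinuity established in Subsection~\ref{subsec Exi-P}.
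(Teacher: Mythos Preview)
Your proof is correct and follows the same architecture as the paper: Duhamel for the perturbed equation, Strichartz (Proposition~\ref{prop Strich} / its Corollary) after frequency truncation by $\mathcal{I}_{r,R}$, and Bernstein smoothing to absorb the derivatives in the nonlinear source. The quadratic advection term is handled identically to the paper (divergence form, $L^1\to L^2$ Bernstein gaining $R^2$). The only difference is in the two linear forcing terms: the paper treats both of them uniformly via an anisotropic Bernstein step in $x_2$, bounding $\mathcal{I}_{r,R}$ of each by $R^{3/2}\|\,\cdot\,\|_{L^{2,1}_{x_1,x_2}}$ and then using only $\|\bar\theta\|_{L^\infty_T L^2(\mathbb{R})}$ and $\|\Theta^A\|_{L^\infty_T L^2}$; you instead use the embedding $H^{3/2-\alpha}(\mathbb{R})\hookrightarrow L^\infty(\mathbb{R})$ for the $(\mathcal{H}\bar\theta)\partial_1\Theta^A$ term and the mixed-norm H\"older/Sobolev estimate from the energy argument (invoking the $L^2_T\dot H^{\alpha/2}$ control of $\Theta^A$) for the $(\mathcal{R}_1\Theta^A)\partial_2\bar\theta$ term. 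Both routes are valid; the paper's is more symmetric and avoids spending the dissipative regularity, while yours recycles estimates already proved and needs a smaller power of $R$.
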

%

Now we consider the contribution from the part of high frequency and the part of low frequency in $\xi_1$.
From the Sobolev embedding, Berenstein inequality
and the energy estimate \eqref{eq EgyE}, we get for every $\sigma\in [2,\frac{4}{2-\alpha}[$,
\begin{equation*}
\begin{aligned}
  \|(\mathrm{Id}-\chi(|D|/R))\Theta^A\|_{L^2(\mathbb{R}^+;L^\sigma(\mathbb{R}^2))}
  & \lesssim \|(\mathrm{Id}-\chi(|D|/R))\Theta^A\|_{L^2(\mathbb{R}^+;\dot H^{1-2/\sigma}(\mathbb{R}^2))} \\
  & \lesssim R^{-(\frac{\alpha}{2}+\frac{2}{\sigma}-1)}\|(\mathrm{Id}-\chi(|D|/R))\Theta^A\|_{L^2(\mathbb{R}^+;
  \dot H^{\frac{\alpha}{2}}(\mathbb{R}^2))} \\
  & \lesssim R^{-(\frac{2}{\sigma}-\frac{2-\alpha}{2})} \|\tilde\theta_0\|_{L^2}
  \exp\Big\{\frac{C}{\nu^2}\|\bar\theta_0\|_{H^{3/2-\alpha}}^2\Big\} .
\end{aligned}
\end{equation*}
Also thanks to the Bernstein inequality (in $x_1$ and $x_2$ separately), we find that for every $T>0$ and $\sigma\in ]2,\infty]$,
\begin{align*}
  \|\chi(|D_1|/r)\chi(|D|/R)\Theta^A\|_{L^2([0,T];L^\sigma(\mathbb{R}^2))}& \lesssim T^{\frac{1}{2}} r^{(\frac{1}{2}-\frac{1}{\sigma})}
  R^{\frac{1}{2}-\frac{1}{\sigma}} \|\Theta^A\|_{L^\infty([0,T]; L^2(\mathbb{R}^2))} \\
  & \lesssim_{T,R} r^{\frac{1}{2}-\frac{1}{\sigma}} \|\tilde\theta_0\|_{L^2}\exp\Big\{\frac{C}{\nu^2}\|\bar\theta_0\|_{H^{3/2-\alpha}}^2\Big\} .
\end{align*}
Collecting the upper estimates, we have that for every $A,r,R,T>0$ and $\sigma\in ]2,\frac{4}{2-\alpha}[$,
\begin{align*}
  \|\Theta^A\|_{L^2([0,T]; L^\sigma(\mathbb{R}^2))}\leq C_0  R^{-(\frac{2}{\sigma}-\frac{2-\alpha}{2})}
  +C_{R,T} r^{\frac{1}{2}-\frac{1}{\sigma}} + \widetilde{C} A^{-\frac{1}{16}(1-\frac{2}{\sigma})},
\end{align*}
where $\widetilde{C}$ depends on $r,R,T$, $\|\tilde\theta_0\|_{L^2}$ and $\|\bar\theta_0\|_{H^{3/2-\alpha}}$ but not on $A$.
Hence, passing $A$ to $\infty$, then $r$ to $0$ and then $R$ to $\infty$ yields the desired estimate \eqref{eq conv}.

At last it suffices to prove Lemma \ref{lem StrApp}.
\begin{proof}[Proof of Lemma \ref{lem StrApp}]
  By virtue of Duhamel's formula, we have
\begin{align*}
  \mathcal{I}_{r,R}\Theta^A =\, & \mathcal{G}^A(t) \mathcal{I}_{r,R}\tilde\theta_0 -
  \int_0^t\mathcal{G}^A(t-\tau)\;\mathcal{I}_{r,R}\big(\mathcal{R}^\perp\Theta^A\cdot\nabla \Theta^A\big)(\tau)\mathrm{d}\tau \\
  & +\int_0^t\mathcal{G}^A(t-\tau)\;\mathcal{I}_{r,R}\big(\mathcal{H}\bar\theta\, \partial_1\Theta^A
  -\mathcal{R}_1\Theta^A\partial_2\bar\theta\big)(\tau)\mathrm{d}\tau \\
  \triangleq \,& \Gamma_1 -\Gamma_2 +\Gamma_3.
\end{align*}
From the Strichartz-type estimate \eqref{eq Stri}, we know that for every $\sigma\in ]2,\infty[$
\begin{align*}
  \|\Gamma_1\|_{L^2(\mathbb{R}^+; L^\sigma(\mathbb{R}^2))}\lesssim_{\sigma,r,R} A^{-\frac{1}{16}(1-\frac{2}{\sigma})}
  \|\tilde\theta_0\|_{L^2(\mathbb{R}^2)}.
\end{align*}
Applying the Minkowski inequality and again \eqref{eq Stri} to $\Gamma_2$, we infer that for every $\sigma\in ]2,\infty[$
and $T>0$,
\begin{equation}\label{eq EST2}
\begin{aligned}
  \|\Gamma_2\|_{L^2([0,T]; L^\sigma(\mathbb{R}^2))}& \leq \Big(\int_0^T \Big|\int_0^T
  1_{[0,t]}(\tau)\|\mathcal{G}^A(t-\tau)\,\mathcal{I}_{r,R}\big(
  \mathcal{R}^\perp\Theta^A\cdot\nabla\Theta^A\big)(\tau)\|_{L^\sigma} \mathrm{d}\tau\Big|^2\mathrm{d}t\Big)^{1/2} \\
  & \leq \int_0^T \Big( \int_\tau^T
  \|\mathcal{G}^A(t-\tau)\,\mathcal{I}_{r,R}\big(
  \mathcal{R}^\perp\Theta^A\cdot\nabla\Theta^A\big)(\tau)\|_{L^\sigma}^2 \mathrm{d}t\Big)^{1/2}\mathrm{d}\tau \\
  & \lesssim_{r,R,\sigma} A^{-\frac{1}{16}(1-\frac{2}{\sigma})} \int_0^T \|\mathcal{I}_{r,R}\big(
  \mathcal{R}^\perp\Theta^A\cdot\nabla\Theta^A\big)(\tau)\|_{L^2}\mathrm{d}\tau.
\end{aligned}
\end{equation}
From Bernstein's inequality and the energy estimate \eqref{eq EgyE}, we further get
\begin{equation*}
\begin{aligned}
  \|\mathcal{I}_{r,R}(\mathcal{R}^\perp\Theta^A\cdot\nabla\Theta^A)\|_{L^1([0,T]; L^2(\mathbb{R}^2))}
  &\lesssim R^{2} \|(\mathcal{R}^\perp\Theta^A)\Theta^A\|_{L^1([0,T]; L^1(\mathbb{R}^2))} \\
  & \lesssim R^2 T \|\Theta^A\|_{L^\infty([0,T]; L^2(\mathbb{R}^2))}^2 \\
  & \lesssim R^2 T \|\tilde\theta_0\|^2_{L^2} \exp\Big\{\frac{C}{\nu^2}\|\bar\theta_0\|_{H^{3/2-\alpha}}^2\Big\}.
\end{aligned}
\end{equation*}
Thus we have
\begin{align*}
  \|\Gamma_2\|_{L^2([0,T]; L^\sigma(\mathbb{R}^2))}\lesssim_{r,R,\sigma} A^{-\frac{1}{16}(1-\frac{2}{\sigma})}T
  \|\tilde\theta_0\|^2_{L^2} \exp\Big\{\frac{C}{\nu^2}\|\bar\theta_0\|_{H^{3/2-\alpha}}^2\Big\}.
\end{align*}
For $\Gamma_3$, similarly as above, especially from Bernstein's inequality in $x_2$-variable and the Calder\'on-Zygmund theorem,
we infer that for every $\sigma\in ]2,\infty[$ and $T>0$,
\begin{align*}
  \|\Gamma_3\|_{L^2([0,T]; L^\sigma(\mathbb{R}^2))}& \lesssim A^{-\frac{1}{16}(1-\frac{2}{\sigma})}
  \|\mathcal{I}_{r,R}\big((\mathcal{H}\bar\theta)\partial_1\Theta^A
  -(\mathcal{R}_1\Theta^A)\partial_2\bar\theta\big)\|_{L^1([0,T];L^2(\mathbb{R}^2))} \\
  & \lesssim A^{-\frac{1}{16}(1-\frac{2}{\sigma})}\Big(R^{\frac{3}{2}}\|(\mathcal{H}\bar\theta)\Theta^A\|_{L^1([0,T]; L^{2,1}_{x_1,x_2})}
  + R^{\frac{3}{2}}\|(\mathcal{R}_1\Theta^A)\bar\theta\|_{L^1([0,T]; L^{2,1}_{x_1,x_2})}\Big) \\
  & \lesssim A^{-\frac{1}{16}(1-\frac{2}{\sigma})}R^{\frac{3}{2}}T \|\bar\theta\|_{L^\infty([0,T]; L^2(\mathbb{R}))}
  \|\Theta^A\|_{L^\infty([0,T]; L^2(\mathbb{R}^2))} \\
  & \lesssim A^{-\frac{1}{16}(1-\frac{2}{\sigma})} R^{\frac{3}{2}}T \|\bar\theta_0\|_{L^2}\|\tilde\theta_0\|_{L^2}
  \exp\Big\{\frac{C}{\nu^2}\|\bar\theta_0\|_{H^{3/2-\alpha}}^2 \Big\}.
\end{align*}
Hence, gathering the upper estimates leads to the expected estimate \eqref{eq StrApp}.

\end{proof}

\section{Proof of Theorem \ref{thm stab}}\label{sec stab}
\setcounter{section}{5}\setcounter{equation}{0}

Now we show the global existence of $\theta^A$ as stated in Theorem \ref{thm stab}.
If we only consider the equation \eqref{eq 1} to get the $H^{2-\alpha}$ estimates of $\theta^A$, due to the estimation \eqref{eq fact1},
it seems impossible to derive an estimate global in time unless the data $\theta_0$ are small enough
(just as Proposition \ref{prop uni-exi}). Thus we shall adopt an idea from
\cite{CDGG06}, that is, to subtract from the equation \eqref{eq 1} the solution $\tilde\theta^A$ of the linear equation
\eqref{eq linDisp} (or its main part $\mathcal{I}_{r,R}\tilde\theta^A$ with $\mathcal{I}_{r,R}$ defined in \eqref{eq I_rR}).
Roughly speaking, since from the Strichartz-type estimate \eqref{eq Stri}, $\tilde\theta^A$ can be sufficiently small
for $A$ large enough, thus the equation of $\theta^A-\tilde\theta^A$ will have small initial data and small forcing terms,
and we can hope to get the global existence result.

More precisely, we first introduce $\tilde\theta^A_m\triangleq \mathcal{I}_{r,R}\tilde\theta^A$ as the main part of $\tilde\theta^A$
which solves the following equation
\begin{equation}\label{eq theA-m}
  \partial_t \tilde\theta_m^A + \nu |D|^\alpha \tilde\theta_m^A + A\, \mathcal{R}_1 \tilde\theta_m^A =0,\quad
  \tilde\theta_m^A|_{t=0}=\mathcal{I}_{r,R}\theta_0,
\end{equation}
and since $\mathcal{I}_{r,R}\theta_0$ strongly converges to $\theta_0$ in $H^{2-\alpha}(\mathbb{R}^2)$
as $r$ tends to $0$ and $R$ tends to $\infty$, the difference $\tilde\theta^A-\tilde\theta^A_m$
is globally defined and can be made arbitrarily small in the functional spaces stated in Theorem \ref{thm stab}.
Hence, in the sequel we shall focus on the difference $\eta^A\triangleq\theta^A-\tilde\theta^A_m$ with $r$ small enough and $R$ large enough
chosen later, and we shall be devoted to show the global existence of $\eta^A$. The corresponding equation can be written as
\begin{equation}\label{eq etaA}
\begin{aligned}
  \partial_t \eta^A  + \nu |D|^\alpha\eta^A +A\mathcal{R}_1\eta^A + (\mathcal{R}^\perp\eta^A) \cdot\nabla\eta^A
  +(\mathcal{R}^\perp\tilde\theta^A_m)\cdot\nabla\eta^A &= F(\eta^A,\tilde\theta_m^A), \\
    \eta^A|_{t=0}& =(\mathrm{Id}-\mathcal{I}_{r,R})\theta_0.
\end{aligned}
\end{equation}
with the forcing term
\begin{align}\label{eq force}
  F(\eta^A,\tilde\theta_m^A)\triangleq
  -(\mathcal{R}^\perp\tilde\theta^A_m)\cdot\nabla\tilde\theta^A_m-(\mathcal{R}^\perp\eta^A )\cdot\nabla\tilde\theta^A_m.
\end{align}
Note that for brevity, we have omitted the dependence of $r,R$ in the notation of $\eta^A$ and $\tilde\theta^A_m$.

\subsection{A priori estimates.}\label{sec apri}

In this subsection, we mainly care about the a priori estimates. The main result is the following claim: for any smooth solution $\eta^A$ to
\eqref{eq etaA} and for every $\epsilon>0$ small enough, there exist three positive absolute constants $r_0,R_0,A_0$
such that for every $A\geq A_0$, we have
\begin{align}\label{eq claim}
  \sup_{t\geq 0}\|\eta^A(t)\|_{H^{2-\alpha}(\mathbb{R}^2)}^2 + \frac{\nu}{2} \int_{\mathbb{R}^+}\|\eta^A(t)\|_{\dot H^{2-\frac{\alpha}{2}}
  (\mathbb{R}^2)}^2\mathrm{d}t \leq \epsilon.
\end{align}
For every $q\in \mathbb{N}$, applying $\Delta_q$ to the equation \eqref{eq etaA} and denoting $\eta^A_q\triangleq \Delta_q \eta^A$,
$F_q\triangleq \Delta_q F$, we get
\begin{align*}
  \partial_t\eta^A_q +\nu|D|^\alpha\eta^A_q +A(\mathcal{R}_1\eta^A_q) + (\mathcal{R}^\perp\eta^A)\cdot\nabla\eta^A_q +
  (\mathcal{R}^\perp\tilde\theta^A_m)\cdot\nabla\eta^A_q = \widetilde{F}_q(\eta^A,\tilde\theta^A_m),
\end{align*}
with
\begin{align*}
  \widetilde{F}_q(\eta^A,\tilde\theta^A_m)\triangleq
   - [\Delta_q,\mathcal{R}^\perp\eta^A]\cdot\nabla\eta^A
  - [\Delta_q,\mathcal{R}^\perp\tilde\theta^A_m]\cdot\nabla\eta^A+ F_q(\eta^A,\tilde\theta^A_m).
\end{align*}
Since $\eta^A$ is real-valued, we know that $\eta^A_q$ is also real-valued, thus taking $L^2$ inner product of the upper equation with
$\eta^A_q$, and from the Bernstein inequality and the integration by parts, we obtain
\begin{align*}
  \frac{1}{2}\frac{d}{dt} \|\eta^A_q(t)\|^2_{L^2}+\nu \||D|^{\frac{\alpha}{2}}\eta^A_q(t)\|_{L^2}^2 &  = \int_{\mathbb{R}^2}
  \widetilde{F}_q(\eta^A,\tilde\theta^A_m)\,\eta^A_q(t,x)\mathrm{d}x \\
  & \leq 2^{-q\frac{\alpha}{2}}\|\widetilde{F}_q(\eta^A,\tilde\theta^A_m)(t)\|_{L^2}\, 2^{q\frac{\alpha}{2}} \|\eta^A_q(t)\|_{L^2} \\
  & \leq C_0 2^{-q\frac{\alpha}{2}}\|\widetilde{F}_q(\eta^A,\tilde\theta^A_m)(t)\|_{ L^2}\, \||D|^{\frac{\alpha}{2}}\eta^A_q(t)\|_{L^2}.
\end{align*}
From Young's inequality, we have
\begin{align*}
  \frac{1}{2}\frac{d}{dt} \|\eta^A_q(t)\|^2_{L^2}+\frac{\nu}{2} \||D|^{\frac{\alpha}{2}}\eta^A_q(t)\|_{L^2}^2
  \leq \frac{C_0}{\nu}\Big( 2^{-q\frac{\alpha}{2}}\|\widetilde{F}_q(\eta^A,\tilde\theta^A_m)(t)\|_{L^2}\Big)^2.
\end{align*}
Integrating in time leads to
\begin{align*}
  \|\eta^A_q(t)\|_{L^2}^2+ \nu \||D|^{\frac{\alpha}{2}}\eta^A_q\|_{L^2_t L^2}^2 \leq
  \|\eta^A_q(0)\|_{L^2}^2+
  \frac{C_0}{\nu} \int_0^t  2^{-q\alpha}\|\widetilde{F}_q(\eta^A,\tilde\theta^A_m)(\tau)\|^2_{L^2}\mathrm{d}\tau.
\end{align*}
By multiplying both sides of the upper inequality by $2^{2q(2-\alpha)}$ and summing over all $q\in\mathbb{N}$, we get
\begin{equation}
\begin{aligned}\label{eq HF}
  & \sum_{q\in\mathbb{N}}2^{2q (2-\alpha)}\|\eta^A_q(t)\|^2_{L^2}+ \nu \sum_{q\in\mathbb{N}}2^{2q(2-\alpha)}
  \||D|^{\frac{\alpha}{2}}\eta^A_q\|_{L^2_t L^2}^2  \\
  \leq & \sum_{q\in\mathbb{N}}2^{2q (2-\alpha)}\|\eta^A_q(0)\|^2_{L^2}+ \frac{C_0}{\nu}  \int_0^t
  \Big(\sum_{q\in\mathbb{N}}2^{2q(2-\frac{3\alpha}{2})}\|\widetilde{F}_q(\eta^A,\tilde\theta^A_m)(\tau)\|^2_{L^2}\Big)\mathrm{d}\tau.
\end{aligned}
\end{equation}
Using Lemma \ref{lem comm} with $s=2-\alpha$ and $\beta=\frac{\alpha}{2}$ yields that
\begin{align*}
  \sum_{q\in\mathbb{N}} 2^{2q(2-\frac{3}{2}\alpha)}\|[\Delta_q, \mathcal{R}^\perp (\eta^A + \tilde\theta^A_m)]\cdot\nabla \eta^A\|_{L^2}^2
  & \lesssim_\alpha \big( \|\eta^A\|^2_{\dot B^{2-\frac{\alpha}{2}}_{2,2}}+ \|\tilde\theta^A_m\|^2_{\dot B^{2-\frac{\alpha}{2}}_{2,2}}\big)
  \|\eta^A\|^2_{B^{2-\alpha}_{2,2}} \\
  & \lesssim_\alpha \big( \||D|^{\frac{\alpha}{2}}\eta^A\|^2_{B^{2-\alpha}_{2,2}}+
  \||D|^{\frac{\alpha}{2}}\tilde\theta^A_m\|^2_{B^{2-\alpha}_{2,2}}\big)
  \|\eta^A\|^2_{B^{2-\alpha}_{2,2}},
\end{align*}
where in the second line we have used the embedding $B^{2-\alpha}_{2,2}\hookrightarrow \dot B^{2-\alpha}_{2,2}$.
From Lemma \ref{lem product}-(1), we infer that
\begin{align*}
  \sum_{q\in\mathbb{N}} 2^{2q(2-\frac{3}{2}\alpha)}\|\Delta_q(\mathcal{R}^\perp\eta^A \cdot\nabla \tilde\theta^A_m)\|_{L^2}^2
  \lesssim R^{6-3\alpha} \|\eta^A\|_{L^2}^2 \|\tilde\theta^A_m\|_{L^\infty}^2 +
  \|\eta^A\|^2_{B^{2-\alpha}_{2,2}} \||D|^{\frac{\alpha}{2}}\tilde\theta^A_m\|^2_{B^{2-\alpha}_{2,2}},
\end{align*}
and
\begin{align*}
  \sum_{q\in\mathbb{N}} 2^{2q(2-\frac{3}{2}\alpha)} \|\Delta_q(\mathcal{R}^\perp\tilde\theta_m^A \cdot\nabla \tilde\theta^A_m)\|_{L^2}^2
  \lesssim R^{6-3\alpha} \|\tilde\theta_m^A\|_{L^2}^2 \|\tilde\theta^A_m\|_{L^\infty}^2.
\end{align*}
Inserting the upper estimates to \eqref{eq HF}, we obtain
\begin{equation}\label{eq HF1}
\begin{aligned}
  & \sum_{q\in\mathbb{N}}2^{2q (2-\alpha)}\|\eta^A_q(t)\|^2_{L^2}+ \nu \sum_{q\in\mathbb{N}}2^{2q(2-\alpha)}
  \||D|^{\frac{\alpha}{2}}\eta^A_q\|_{L^2_t L^2}^2 \\
  \leq & \sum_{q\in\mathbb{N}}2^{2q (2-\alpha)}\|\eta^A_q(0)\|^2_{L^2}
  +  \frac{C_\alpha}{\nu} \int_0^t \big( \||D|^{\frac{\alpha}{2}}\eta^A(\tau)\|^2_{B^{2-\alpha}_{2,2}}+
  \||D|^{\frac{\alpha}{2}}\tilde\theta^A_m(\tau)\|^2_{B^{2-\alpha}_{2,2}}\big)
  \|\eta^A(\tau)\|^2_{B^{2-\alpha}_{2,2}}\mathrm{d}\tau  \\ & +  \frac{C_\alpha}{\nu} R^{6-3\alpha}  \int_0^t \big(\|\tilde\theta_m^A(\tau)\|_{L^2}^2
   + \|\eta^A(\tau)\|_{L^2}^2 \big)  \|\tilde\theta^A_m(\tau)\|_{L^\infty}^2\mathrm{d}\tau.
\end{aligned}
\end{equation}
Now we consider the low-frequency part. Applying $\Delta_{-1}$ to the equation \eqref{eq etaA}, we have
\begin{align*}
  \partial_{t}(\Delta_{-1}\eta^A) + \nu |D|^\alpha (\Delta_{-1}\eta^A) + A \mathcal{R}_1 (\Delta_{-1}\eta^A)= \Delta_{-1} G(\eta^A,\tilde\theta^A_m),
\end{align*}
where
\begin{align*}
  G(\eta^A,\tilde\theta^A_m)\triangleq -(\mathcal{R}^\perp\eta^A\cdot\nabla\eta^A)
  -(\mathcal{R}^\perp\tilde\theta^A_m\cdot\nabla\eta^A)
  -(\mathcal{R}^\perp\eta^A\cdot\nabla\tilde\theta^A_m)
  -(\mathcal{R}^\perp\tilde\theta^A_m\cdot\nabla\tilde\theta^A_m).
\end{align*}
By using the $L^2$ energy method, we obtain
\begin{align*}
  \frac{1}{2}\frac{d}{dt}\|\Delta_{-1}\eta^A(t)\|^2_{L^2} + \nu \||D|^{\frac{\alpha}{2}} \Delta_{-1}\eta^A(t)\|_{L^2}^2
  & \leq  \|\Delta_{-1}G(\eta^A,\tilde\theta^A_m)(t)\|_{\dot H^{-\frac{\alpha}{2}}} \||D|^{\frac{\alpha}{2}}\Delta_{-1}\eta^A(t)\|_{L^2}.
\end{align*}
By virtue of the Young inequality, we have
\begin{align*}
  \frac{1}{2}\frac{d}{dt}\|\Delta_{-1}\eta^A(t)\|^2_{L^2} + \frac{\nu}{2} \||D|^{\frac{\alpha}{2}} \Delta_{-1}\eta^A(t)\|_{L^2}^2
  & \leq  \frac{C_0}{\nu} \|\Delta_{-1}G(\eta^A,\tilde\theta^A_m)(t)\|_{\dot H^{-\frac{\alpha}{2}}}^2.
\end{align*}
Integrating in time yields that
\begin{align}\label{eq LF}
  \|\Delta_{-1}\eta^A(t)\|^2_{L^2} + \nu \||D|^{\frac{\alpha}{2}} \Delta_{-1}\eta^A\|_{L^2_t L^2}^2
  & \leq \|\Delta_{-1}\eta^A(0)\|^2_{L^2} +
  \frac{C_0}{\nu} \int_0^t \|\Delta_{-1}G(\eta^A,\tilde\theta^A_m)(\tau)\|_{\dot H^{-\frac{\alpha}{2}}}^2\mathrm{d}\tau.
\end{align}
From Lemma \ref{lem product}-(2), we deduce that
\begin{align*}
  \|\Delta_{-1}(\mathcal{R}^\perp\eta^A\cdot\nabla\eta^A)\|^2_{\dot H^{-\frac{\alpha}{2}}}
  & \lesssim \sum_{-\infty< q\leq 0}2^{-q\alpha}
  \|\dot\Delta_q(\mathcal{R}^\perp\eta^A\cdot\nabla\eta^A)\|_{L^2}^2 \\
  & \lesssim \sum_{-\infty<q\leq 0}2^{2q(2-\alpha)} \||D|^{\frac{\alpha}{2}}\eta^A\|^2_{L^2} \|\eta^A\|^2_{L^2} \\
  & \lesssim  \||D|^{\frac{\alpha}{2}}\eta^A\|^2_{L^2} \|\eta^A\|^2_{L^2},
\end{align*}
and
\begin{align*}
  \|\Delta_{-1}(\mathcal{R}^\perp\tilde\theta^A_m\cdot\nabla\eta^A)\|^2_{\dot H^{-\frac{\alpha}{2}}}+
  \|\Delta_{-1}(\mathcal{R}^\perp\eta^A\cdot\nabla\tilde\theta^A_m)\|^2_{\dot H^{-\frac{\alpha}{2}}}
  \lesssim  \||D|^{\frac{\alpha}{2}} \tilde\theta^A_m\|_{L^2}^2 \|\eta^A \|^2_{L^2} .
\end{align*}
It is also obvious to see that
\begin{align*}
  \|\Delta_{-1}(\mathcal{R}^\perp\tilde\theta^A_m\cdot\nabla\tilde\theta^A_m )\|^2_{\dot H^{-\frac{\alpha}{2}}} \lesssim
  \|\Delta_{-1}((\mathcal{R}^\perp\tilde\theta^A_m) \,\tilde\theta^A_m )\|^2_{\dot H^{1-\frac{\alpha}{2}}} \lesssim \|\tilde\theta^A_m\|_{L^2}^2 \|\tilde\theta^A_m\|_{L^\infty}^2.
\end{align*}
Inserting the upper estimates into \eqref{eq LF}, we obtain
\begin{equation}\label{eq LF1}
\begin{aligned}
  &  \|\Delta_{-1}\eta^A(t)\|^2_{L^2} + \nu \||D|^{\frac{\alpha}{2}} \Delta_{-1}\eta^A\|_{L^2_t L^2}^2 \\
  \leq &  \|\Delta_{-1}\eta^A(0)\|^2_{L^2} +  \frac{C_\alpha}{\nu} \int_0^t \big(\||D|^{\frac{\alpha}{2}}\eta^A(\tau)\|^2_{L^2}
  +\||D|^{\frac{\alpha}{2}} \tilde\theta^A_m(\tau)\|_{L^2}^2\big) \|\eta^A(\tau)\|^2_{L^2}\mathrm{d}\tau \\
  & + \frac{C_\alpha}{\nu} \int_0^t\|\tilde\theta^A_m(\tau)\|_{L^2}^2 \|\tilde\theta^A_m(\tau)\|_{L^\infty}^2 \mathrm{d}\tau.
\end{aligned}
\end{equation}
Combining this estimate with \eqref{eq HF1} leads to
\begin{equation*}
\begin{aligned}
  &  \|\eta^A(t)\|_{B^{2-\alpha}_{2,2}}^2 + \nu \||D|^{\frac{\alpha}{2}}\eta^A\|_{L^2_t B^{2-\alpha}_{2,2}}^2 \\
  \leq & \|\eta^A(0)\|_{B^{2-\alpha}_{2,2}}^2 + \frac{C_\alpha}{\nu} \int_0^t \big( \||D|^{\frac{\alpha}{2}}\eta^A(\tau)\|^2_{B^{2-\alpha}_{2,2}}+
  \||D|^{\frac{\alpha}{2}}\tilde\theta^A_m(\tau)\|^2_{B^{2-\alpha}_{2,2}}\big)
  \|\eta^A(\tau)\|^2_{B^{2-\alpha}_{2,2}}\mathrm{d}\tau  \\ & + \frac{C_\alpha}{\nu} R^{6-3\alpha} \int_0^t \big(\|\tilde\theta_m^A(\tau)\|_{L^2}^2
   + \|\eta^A(\tau)\|_{L^2}^2 \big) \|\tilde\theta^A_m(\tau)\|_{L^\infty}^2\mathrm{d}\tau.
\end{aligned}
\end{equation*}
From the fact that $\|\eta^A\|_{L^\infty_t L^2}\leq \|\tilde\theta^A_m\|_{L^\infty_t L^2} + \|\theta^A\|_{L^\infty_t L^2}\leq 2\|\theta_0\|_{L^2}$,
we moreover find that
\begin{equation}\label{eq EST1}
\begin{aligned}
  &  \|\eta^A\|_{L^\infty_t B^{2-\alpha}_{2,2}}^2 + \nu \||D|^{\frac{\alpha}{2}}\eta^A\|_{L^2_t B^{2-\alpha}_{2,2}}^2 \\
  \leq & \|\eta^A(0)\|_{B^{2-\alpha}_{2,2}}^2 + \frac{C_\alpha}{\nu} \int_0^t
  \||D|^{\frac{\alpha}{2}}\tilde\theta^A_m(\tau)\|^2_{B^{2-\alpha}_{2,2}}
  \|\eta^A\|^2_{L^\infty_\tau B^{2-\alpha}_{2,2}}\mathrm{d}\tau  \\ &
  + \frac{C_\alpha}{\nu}\|\eta^A\|_{L^\infty_t B^{2-\alpha}_{2,2}}^2  \||D|^{\frac{\alpha}{2}}\eta^A\|_{L^2_t B^{2-\alpha}_{2,2}}^2
  + \frac{C_\alpha}{\nu} R^{6-3\alpha}\|\theta_0\|_{L^2}^2 \int_0^t   \|\tilde\theta^A_m(\tau)\|_{L^\infty}^2\mathrm{d}\tau.
\end{aligned}
\end{equation}
Set
\begin{align*}
  T_A^* \triangleq \mathrm{sup}\Big\{t\geq 0;  \; \|\eta^A\|^2_{L^\infty_t B^{2-\alpha}_{2,2}}< \frac{\nu^2}{2C_\alpha}\Big\},
\end{align*}
due to $\|\eta^A(0)\|^2_{B^{2-\alpha}_{2,2}}=\|(\mathrm{Id}-\mathcal{I}_{r,R})\theta_0\|_{B^{2-\alpha}_{2,2}}^2 $, and by the Lebesgue theorem,
we can choose some small number $r$ and large number $R$ such that $\|\eta^A(0)\|_{B^{2-\alpha}_{2,2}}^2\leq \frac{\nu^2}{4C_\alpha}$,
thus $T^*_A>0$ follows from that $\eta^A$ is a (continuous in time) smooth solution.
Then, through the Strichartz-type estimate \eqref{eq Stri}, we obtain that for every $t\in [0,T^*_A[$,
\begin{equation*}
\begin{aligned}
  \|\eta^A\|_{L^\infty_t B^{2-\alpha}_{2,2}}^2 + \frac{\nu}{2} \||D|^{\frac{\alpha}{2}}\eta^A\|_{L^2_t B^{2-\alpha}_{2,2}}^2
  \leq &\, \|\eta^A(0)\|_{B^{2-\alpha}_{2,2}}^2   + \frac{C_\alpha}{\nu} R^{6-3\alpha} C_{r,R} A^{-\frac{1}{8}}\|\theta_0\|^4_{L^2}
  \\ & + \frac{C_\alpha}{\nu} \int_0^t
  \||D|^{\frac{\alpha}{2}}\tilde\theta^A_m(\tau)\|^2_{B^{2-\alpha}_{2,2}}
  \|\eta^A\|^2_{L^\infty_\tau B^{2-\alpha}_{2,2}}\mathrm{d}\tau .
\end{aligned}
\end{equation*}
Gronwall's inequality yields that for every $t\in [0,T^*_A[$
\begin{equation*}
\begin{aligned}
  \|\eta^A\|_{L^\infty_t B^{2-\alpha}_{2,2}}^2 + \frac{\nu}{2} \||D|^{\frac{\alpha}{2}}\eta^A\|_{L^2_t B^{2-\alpha}_{2,2}}^2
  \leq &\, \exp\Big\{\frac{C_\alpha}{\nu^2} \|\theta_0\|^2_{H^{2-\alpha}}\Big\}
  \Big(\|\eta^A(0)\|_{B^{2-\alpha}_{2,2}}^2   + \frac{C_{r,R,\alpha}}{\nu} A^{-\frac{1}{8}}\|\theta_0\|^4_{L^2}\Big),
\end{aligned}
\end{equation*}
where we have used the following fact that
\begin{align*}
  \|\tilde\theta^A_m(t)\|^2_{B^{2-\alpha}_{2,2}} + \nu \||D|^{\frac{\alpha}{2}}\tilde\theta^A_m\|^2_{L^2_t B^{2-\alpha}_{2,2}}\leq
  \| \theta_0\|_{B^{2-\alpha}_{2,2}}^2
  \leq C_0 \|\theta_0\|^2_{H^{2-\alpha}}.
\end{align*}
For every $\epsilon>0$, we can further choose some small number $r$ and large number $R$ such that
\begin{equation*}
  \|(\mathrm{Id}-\mathcal{I}_{r,R})\theta_0\|^2_{B^{2-\alpha}_{2,2}} \exp\Big\{\frac{C_\alpha}{\nu^2} \|\theta_0\|^2_{H^{2-\alpha}}\Big\}
  \leq \frac{\epsilon}{2C_0},
\end{equation*}
where $C_0$ is the absolute constant from the relation $\frac{1}{C_0}\|f\|^2_{B^s_{2,2}}\leq \|f\|^2_{H^s} \leq C_0\|f\|^2_{B^s_{2,2}}$,
$\forall s\in\mathbb{R}$.
For fixed $r,R$, we can choose $A$ large enough so that
\begin{align*}
   A^{-\frac{1}{8}}\frac{C_{r,R,\alpha}}{\nu}  \|\theta_0\|^4_{L^2}
   \exp\Big\{\frac{C_\alpha}{\nu^2} \|\theta_0\|^2_{H^{2-\alpha}}\Big\}
   \leq  \frac{\epsilon}{2C_0}.
\end{align*}
Hence for every $\epsilon>0$ and for the appropriate $r,R,A$ (i.e. $r_0,R_0,A\geq A_0$), we have
\begin{align*}
  \sup_{t\in[0,T^*_A[}\|\eta^A(t)\|_{ B^{2-\alpha}_{2,2}}^2 + \frac{\nu}{2} \int_0^{T_A^*}
  \||D|^{\frac{\alpha}{2}}\eta^A(t)\|_{ B^{2-\alpha}_{2,2}}^2\mathrm{d}t
  \leq \frac{\epsilon}{C_0}.
\end{align*}
Furthermore, for every $\epsilon\leq \frac{C_0 \nu^2}{4C_\alpha}$, we have $T^*_A=\infty$ and
\begin{align*}
  \sup_{t\in\mathbb{R}^+}\|\eta^A(t)\|_{ B^{2-\alpha}_{2,2}}^2 + \frac{\nu}{2} \int_0^\infty
  \||D|^{\frac{\alpha}{2}}\eta^A(t)\|_{ B^{2-\alpha}_{2,2}}^2\mathrm{d}t
  \leq \frac{\epsilon}{C_0}.
\end{align*}
Therefore \eqref{eq claim} follows.

\subsection{Uniqueness.}
For every $T>0$, let $\theta^A_1$ and $\theta^A_2$ belonging to
$$L^\infty([0,T]; H^{2-\alpha}(\mathbb{R}^2))\cap L^2([0,T];\dot H^{2-\frac{\alpha}{2}}(\mathbb{R}^2))$$
be two solutions to \eqref{eq 1} with the same initial data $\theta_0\in H^{2-\alpha/2}(\mathbb{R}^2)$.
Thus set $\delta\theta^A\triangleq \theta^A_1-\theta^A_2$, and then the difference equation writes
\begin{equation*}
\begin{split}
  \partial_t\delta\theta^A + (\mathcal{R}^\perp\theta^A_1)\cdot\nabla\delta\theta^A
  +\nu |D|^\alpha \delta\theta^A +A (\mathcal{R}_1\delta\theta^A) & = -(\mathcal{R}^\perp\delta\theta^A)\cdot\nabla\theta^A_2 , \\
  \delta\theta^A|_{t=0}&=\delta\theta_0^A (=0).
\end{split}
\end{equation*}
We use the $L^2$ energy argument to get
\begin{align*}
  \frac{1}{2}\frac{d}{dt}\|\delta\theta^A(t)\|_{L^2}^2 +\frac{\nu}{2} \big\||D|^{\frac{\alpha}{2}}\delta\theta^A(t)\big\|_{L^2}^2
  \leq&  \|(\mathcal{R}^\perp\delta\theta^A)\cdot\nabla\theta^A_2(t) \|_{\dot H^{-\frac{\alpha}{2}}}
  \big\||D|^{\frac{\alpha}{2}}\delta\theta^A(t)\big\|_{L^2}
\end{align*}
From the following classical product estimate that for every divergence-free vector field $f\in \dot H^{s_1}(\mathbb{R}^2)$ and
$g\in \dot H^{s_2}(\mathbb{R}^2)$
with $s_1,s_2<1$ and $s_1+s_2>-1$,
\begin{align*}
  \|f\cdot\nabla g\|_{\dot H^{s_1+s_2-1}(\mathbb{R}^2)}\lesssim_{s_1,s_2} \|f\|_{\dot H^{s_1}(\mathbb{R}^2)} \|\nabla g\|_{H^{s_2}(\mathbb{R}^2)},
\end{align*}
we know that
\begin{align*}
  \|(\mathcal{R}^\perp\delta\theta^A)\cdot\nabla\theta^A_2(t) \|_{\dot H^{-\frac{\alpha}{2}}}\lesssim_\alpha
  \|\delta\theta^A\|_{L^2} \|\nabla\theta^A_2\|_{\dot H^{1-\frac{\alpha}{2}}}.
\end{align*}
Thanks to the Young inequality, we further find
\begin{align}\label{eq uni1}
  \frac{d}{dt}\|\delta\theta^A\|_{L^2}^2 +\nu \|\delta\theta^A\|_{\dot H^{\frac{\alpha}{2}}}^2 \lesssim_{\alpha,\nu}
    \|\nabla\theta^A_2(t)\|^2_{\dot H^{1-\frac{\alpha}{2}}} \|\delta\theta^A(t)\|_{L^2}^2.
\end{align}
Gronwall's inequality yields
\begin{align*}
  \|\delta\theta^A(t)\|_{L^2}^2 \leq \|\delta\theta_0^A\|_{L^2}^2 \exp\Big\{
  C \|\theta^A_2\|^2_{L^2([0,T]; \dot H^{2-\frac{\alpha}{2}}(\mathbb{R}^2))} \Big \}\lesssim
  \|\delta\theta_0\|_{L^2}^2 \exp\Big\{\frac{C}{\nu^2} \|\theta_0\|_{H^{2-\alpha}}^2 \Big\}.
\end{align*}
Hence the uniqueness is guaranteed.

\subsection{Global existence.}
From the Friedrich method, we consider the following approximate system
\begin{equation}\label{eq appExi}
\begin{cases}
  \partial_t \eta_k^A  + \nu |D|^\alpha\eta_k^A +A\mathcal{R}_1\eta_k^A + J_k\big(\mathcal{R}^\perp\eta_k^A \cdot\nabla\eta_k^A\big)
  +J_k\big(\mathcal{R}^\perp\tilde\theta^A_m\cdot\nabla\eta_k^A\big) = J_k F(\eta_k^A,\tilde\theta_m^A), \\
    \eta_k^A|_{t=0} =J_k(\mathrm{Id}-\mathcal{I}_{r,R})\theta_0,
\end{cases}
\end{equation}
where $J_k: L^2\mapsto J_k L^2$, $k\in\mathbb{N}$ is the projection operator such that
$J_k f \triangleq \mathcal{F}^{-1}(1_{B(0,k)}(\xi)\widehat f(\xi))$ and $\tilde\theta^A_m$ solving \eqref{eq theA-m}
is the main part of $\tilde\theta^A$. Indeed the system \eqref{eq appExi} becomes an
ordinary differential equation on the space $J_k L^2\triangleq \{f\in L^2: \mathrm{supp}\,\widehat f\subset B(0,k)\}$
with the $L^2$ norm. Since
$$\|J_k(\mathcal{R}^\bot \eta_k^A\cdot\nabla \eta_k^A)\|_{L^2} \lesssim k \|\mathcal{R}^\perp\eta^A_k\|_{L^2}\|\nabla\eta^A_k\|_{L^2}
\lesssim k^2 \|\eta^A_k\|_{L^2}^2,$$
and
$$\|J_k\big(\mathcal{R}^\perp\tilde\theta^A_m\cdot\nabla\eta^A_k+ \mathcal{R}^\perp\eta^A_k \cdot\nabla\tilde\theta^A_m\big) \|_{L^2}
\lesssim k^2 \|\theta_0\|_{L^2} \|\eta^A_k\|_{L^2}, $$
and $\|J_k(\mathcal{R}^\perp\tilde\theta^A_m\cdot\nabla \tilde\theta^A_m)\|_{L^2} \lesssim k^2 \|\theta_0\|_{L^2}^2 $,
we have that for every $r,R>0$ and $k\in \mathbb{N}$, there exists a unique solution $\eta^A_k\in \mathcal{C}^\infty([0,T_k[;J_k L^2)$
to the system \eqref{eq appExi}, with $T_k>0$ the maximal existence time. Moreover, from the $L^2$ energy method and in a similar way as obtaining
\eqref{eq uni1}, we get
\begin{align*}
  \frac{d}{dt}\|\eta^A_k\|_{L^2}^2 + \nu \|\eta^A_k\|^2_{\dot H^{\frac{\alpha}{2}}} \lesssim_{\nu} \|\eta^A_k\|_{L^2}^2
  \|\tilde\theta^A_m\|_{\dot H^{2-\frac{\alpha}{2}}}^2  + \|\tilde\theta^A_m\|_{L^2}^2 \|\tilde\theta^A_m\|_{\dot H^{2-\frac{\alpha}{2}}}^2.
\end{align*}
Gronwall's inequality and the energy-type estimate of the linear equation \eqref{eq linDisp} yield that
\begin{align*}
  \|\eta^A_k(t)\|_{L^2}^2 & \leq \exp\{C_\nu \|\tilde\theta^A_m\|_{L^2_t \dot H^{2-\frac{\alpha}{2}}} \}
  \big( \|\eta^A_k(0)\|_{L^2}^2 + \|\tilde\theta^A_m \|^2_{L^\infty_t L^2} \|\tilde\theta^A_m\|^2_{L^2_t \dot H^{2-\frac{\alpha}{2}}}\big) \\
  & \leq \exp\{C_\nu \|\theta_0\|_{H^{2-\alpha}}\} \big(\|\theta_0\|_{L^2}^2 + \|\theta_0\|_{L^2}^2 \|\theta_0\|_{H^{2-\alpha}}^2 \big).
\end{align*}
Hence the classical continuation criterion ensures that $T_k=\infty$ and $\eta^A_k\in \mathcal{C}^\infty(\mathbb{R}^+; J_k L^2)$ is a global solution
to the system \eqref{eq appExi}. This further guarantees the a priori estimate in Section \ref{sec apri}, that is, we obtain that
there exist positive absolute constants $\epsilon_0,r_0,R_0,A_0$ independent of $k$ such that for every $0<\epsilon<\epsilon_0$ and $A>A_0$,
\begin{align*}
  \sup_{t\in\mathbb{R}^+}\|\eta^A_k(t)\|_{H^{2-\alpha}}^2 + \nu \int_{\mathbb{R}^+}\|\eta^A_k(t)\|^2_{\dot H^{2-\frac{\alpha}{2}}}\mathrm{d}t
  \leq \epsilon.
\end{align*}
Based on this uniform estimate, it is not hard to show that $(\eta^A_k)_{k\in\mathbb{N}}$ is a Cauchy sequence in
$\mathcal{C}(\mathbb{R}^+; L^2(\mathbb{R}^2))$, and thus it converges strongly to a function $\eta^A\in\mathcal{C}(\mathbb{R}^+; L^2(\mathbb{R}^2))$.
By a standard process, one can prove that $\eta^A$ solves the system \eqref{eq etaA} and
$\eta^A\in L^\infty(\mathbb{R}^+; H^{2-\alpha}(\mathbb{R}^2))\cap L^2(\mathbb{R}^+; \dot H^{2-\frac{\alpha}{2}}(\mathbb{R}^2))$.
Moreover, from the proof in Section \ref{sec apri} and by replacing $\|\eta^A\|_{L^\infty_t B^{2-\alpha}_{2,2}}$ with
$\|\eta^A\|_{\widetilde{L}^\infty_t B^{2-\alpha}_{2,2}}$ in \eqref{eq EST1}, one indeed can prove that
$\eta^A\in\widetilde{L}^\infty(\mathbb{R}^+; B^{2-\alpha}_{2,2}(\mathbb{R}^2))$,
and this implies that $\eta^A\in \mathcal{C}(\mathbb{R}^+; H^{2-\alpha}(\mathbb{R}^2))$. Finally, let $\theta^A=\eta^A+\tilde\theta^A_m$, then
for $A$ large enough $\theta^A$ be the unique solution to the dispersive dissipative QG equation \eqref{eq 1},
and as $A\rightarrow\infty$, $r\rightarrow 0$, $R\rightarrow \infty$ and $\epsilon\rightarrow 0$ one-by-one,
we get the expected convergence \eqref{eq conv2}.

\section{Appendix}
\setcounter{section}{6}\setcounter{equation}{0}

We first consider some commutator estimates.
\begin{lemma}\label{lem comm}
  Let $v=(v_1,\cdots,v_n)$ be a smooth divergence-free vector field over $\mathbb{R}^n$ and $f$ be a smooth scalar function of $\mathbb{R}^n$.
Then for every $q\in\mathbb{N}$, $\beta\in ]0,1+\frac{n}{2}[$ and $s\in ]\beta-1-\frac{n}{2},1+\frac{n}{2}[$, there exists a positive absolute constant $C$ depending
only on $\beta,s,n$ such that
\begin{align*}
  2^{q(s-\beta)}\|[\Delta_q,v]\cdot\nabla f\|_{L^2(\mathbb{R}^n)} \leq C c_q \| v\|_{\dot B_{2,2}^{1+\frac{n}{2}-\beta}(\mathbb{R}^n)}
  \|f\|_{B_{2,2}^s(\mathbb{R}^n)},
\end{align*}
where $(c_q)_{q\in\mathbb{N}}$ satisfies $\sum_{q\in\mathbb{N}}(c_q)^2\leq 1$. In particular, if $n=2$ and
$v=\mathcal{R}^\perp f$, we also have that for every $\beta>0$ and $s>\beta-1-\frac{n}{2}$,
\begin{align*}
  2^{q(s-\beta)}\|[\Delta_q,v]\cdot\nabla f\|_{L^2(\mathbb{R}^n)} \leq C c_q \| f\|_{\dot B_{2,2}^{1+\frac{n}{2}-\beta}(\mathbb{R}^n)}
  \|f\|_{ B_{2,2}^s(\mathbb{R}^n)},
\end{align*}
with $(c_q)_{q\in\mathbb{N}}$ satisfying $\sum_{q\in\mathbb{N}}(c_q)^2\leq 1$.
\end{lemma}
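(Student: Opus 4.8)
The plan is to prove both inequalities by Bony's paradifferential calculus, splitting the commutator into three groups and tracking the Littlewood--Paley indices; the index restrictions in the statement will emerge exactly as the convergence conditions of the three resulting summations. Throughout I will estimate each group in $L^2$, multiply by $2^{q(s-\beta)}$, and verify that the outcome has the form $c_q\|v\|_{\dot B^{1+n/2-\beta}_{2,2}}\|f\|_{B^s_{2,2}}$ with $\sum_q c_q^2\lesssim 1$.

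First I would apply Bony's decomposition to $v\cdot\nabla f=\sum_k v^k\partial_k f$, writing $v^k\partial_k f=T_{v^k}\partial_k f+T_{\partial_k f}v^k+R(v^k,\partial_k f)$, and likewise for $v\cdot\nabla\Delta_q f$. Subtracting, the commutator $[\Delta_q,v\cdot\nabla]f=\Delta_q(v\cdot\nabla f)-v\cdot\nabla\Delta_q f$ splits into: group (I), the paraproduct commutator $\sum_k[\Delta_q,T_{v^k}]\partial_k f$; group (II), the reverse-paraproduct pieces $\sum_k\big(\Delta_q T_{\partial_k f}v^k-T_{\partial_k\Delta_q f}v^k\big)$; and group (III), the remainder pieces $\sum_k\big(\Delta_q R(v^k,\partial_k f)-R(v^k,\partial_k\Delta_q f)\big)$. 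For group (I) only the terms with $|j-q|\leq 4$ survive, and on each I write $[\Delta_q,S_{j-1}v^k]g(x)=\int h_q(x-y)\big(S_{j-1}v^k(y)-S_{j-1}v^k(x)\big)g(y)\,dy$ with $h_q=2^{qn}h(2^q\cdot)$; a first-order Taylor expansion of $S_{j-1}v^k$ extracts the gain $2^{-q}\|\nabla S_{j-1}v\|_{L^\infty}$. Bounding $\|\nabla S_{j-1}v\|_{L^\infty}\lesssim\sum_{j'\le j}2^{j'(1+n/2)}\|\Delta_{j'}v\|_{L^2}$ by Bernstein and resumming against $\|v\|_{\dot B^{1+n/2-\beta}_{2,2}}$ yields a convergent geometric series precisely when $\beta>0$.

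Group (II), which enjoys no commutator smoothing, I would treat directly by H\"older together with the spectral localizations: in $\Delta_q T_{\partial_k f}v^k$ only $|j-q|\lesssim 1$ contributes and one meets $\|S_{j-1}\nabla f\|_{L^\infty}$, whose Bernstein bound converges only for $s<1+n/2$; in $T_{\partial_k\Delta_q f}v^k$ the factor $\Delta_q f$ sits at frequency $2^q$ while the high modes of $v$ are summed, and $\sum_{j\gtrsim q}\|\Delta_j v\|_{L^2}$ converges only for $\beta<1+n/2$. For group (III) I would first use $\operatorname{div}v=0$ to rewrite $\sum_k R(v^k,\partial_k g)=\operatorname{div}\tilde R(v,g)$, thereby pulling out one derivative; the ensuing high-high-to-low summation over $j\gtrsim q$, estimated via $\|\Delta_q(\Delta_j v\,\Delta_j f)\|_{L^2}\lesssim 2^{qn/2}\|\Delta_j v\|_{L^2}\|\Delta_j f\|_{L^2}$, converges exactly when $s>\beta-1-n/2$ (the divergence-free rewriting is what sharpens the threshold from $s>\beta-n/2$ down to $s>\beta-1-n/2$). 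Collecting (I)--(III) reproduces the stated range $\beta\in\,]0,1+n/2[$ and $s\in\,]\beta-1-n/2,1+n/2[$.

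The hard part, I expect, is group (II): lacking any gain from a commutator, one must lean entirely on the spectral supports both to localize the output and to absorb the derivative, and it is here that the two upper thresholds $\beta<1+n/2$ and $s<1+n/2$ are forced. Finally, for the special case $v=\mathcal{R}^\perp f$ with $n=2$, I would use that $\mathcal{R}^\perp$ is bounded on every homogeneous Besov space, so $\|v\|_{\dot B^{1+n/2-\beta}_{2,2}}\simeq\|f\|_{\dot B^{1+n/2-\beta}_{2,2}}$, and that both slots of the bilinear expression now contain $f$. This freedom lets me reassign, in group (II), the $L^\infty$-factor to whichever norm gives the Bernstein exponent the correct sign: measuring the low-frequency $L^\infty$ factor through $\dot B^{1+n/2-\beta}_{2,2}$ needs only $\beta>0$ and so eliminates the requirement $s<1+n/2$, while measuring the high-frequency factor through $B^s_{2,2}$ removes the requirement $\beta<1+n/2$, since when $\beta\ge 1+n/2$ the hypothesis $s>\beta-1-n/2$ already forces $s>0$. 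Group (III) is structurally unchanged and still contributes $s>\beta-1-n/2$, so the relaxed range $\beta>0$ and $s>\beta-1-n/2$ results.
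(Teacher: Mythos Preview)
Your proposal is correct and follows essentially the same route as the paper: a Bony paraproduct splitting into three groups, with the kernel/first-order Taylor gain for the $T_v$-commutator piece, a direct H\"older--Bernstein estimate for the reverse-paraproduct piece (yielding the $s<1+\tfrac n2$ threshold), the divergence-free rewriting of the remainder to save a derivative (yielding $s>\beta-1-\tfrac n2$), and the Calder\'on--Zygmund role-swap in group~(II) for the case $v=\mathcal R^\perp f$. The only cosmetic difference is that you subtract two full Bony decompositions whereas the paper writes block commutators $[\Delta_q,S_{k-1}v]$, $[\Delta_q,\Delta_k v]$ directly; this makes your group~(II) carry an explicit tail $T_{\nabla\Delta_q f}v=\sum_{j\gtrsim q}\cdots$ and hence an explicit use of $\beta<1+\tfrac n2$, which in the paper's grouping is not isolated, but the underlying estimates are identical.
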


\begin{proof}[Proof of Lemma \ref{lem comm}]
  From Bony's decomposition we have
\begin{equation*}
\begin{split}
  [\Delta_q,v]\cdot\nabla f & = \sum_{|k-q|\leq 4}[\Delta_q,S_{k-1}v]\cdot\nabla\Delta_k f
  +\sum_{|k-q|\leq 4} [\Delta_q,\Delta_k v]\cdot\nabla S_{k-1}f
  + \sum_{k\geq q-3}[\Delta_q, \Delta_k v]\cdot\nabla\widetilde{\Delta}_k f \\
  & \triangleq \mathrm{I}_q +\mathrm{II}_q + \mathrm{III}_q.
\end{split}
\end{equation*}
For $\mathrm{I}_q$, by virtue of the expression $\Delta_q=h_q(\cdot)\ast=2^{qn}h(2^q\cdot)\ast$ with $h\triangleq
\mathcal{F}^{-1}(\psi)\in\mathcal{S}(\mathbb{R}^n)$, we get
\begin{equation*}
\begin{aligned}
  2^{q(s-\beta)}\|\mathrm{I}_q\|_{L^2} & \lesssim 2^{q(s-\beta)} \sum_{|k-q|\leq 4}\|xh_q\|_{L^1} \|\nabla S_{k-1}v\|_{L^\infty}
  \|\nabla \Delta_k f\|_{L^2} \\
  & \lesssim \sum_{|k-q|\leq 4} 2^{q(s-\beta-1)}2^{k(1-s )}
  \sum_{-\infty< k_1\leq k-2}2^{k_1 \beta} 2^{k_1(1+\frac{n}{2}-\beta)} \|\dot\Delta_{k_1}v\|_{L^2}
  \big(2^{ks}\|\Delta_k f\|_{L^2}\big) \\
  &\lesssim  \|f\|_{B_{2,2}^s} \sum_{-\infty < k_1\leq q+2} 2^{(k_1-q) \beta} 2^{k_1(1+\frac{n}{2}-\beta)}
  \|\dot\Delta_{k_1}v\|_{L^2} \\
  & \lesssim c_q \| v\|_{\dot B_{2,2}^{1+\frac{n}{2}-\beta}} \|f\|_{B_{2,2}^s},
\end{aligned}
\end{equation*}
with $(c_q)_{q\in\mathbb{N}}$ satisfying $\sum_{q\in\mathbb{N}}(c_q)^2\leq 1$.
For $\mathrm{II}_q$, we directly obtain that for every $s<1+\frac{n}{2}$
\begin{equation*}
\begin{aligned}
  2^{q(s-\beta)}\|\mathrm{II}_q\|_{L^2}& \lesssim 2^{q(s-\beta)}\sum_{|k-q|\leq 4;k\in\mathbb{N}}
  \|\Delta_k v\|_{L^2} \|\nabla S_{k-1} f\|_{L^\infty} \\
  & \lesssim \sum_{|k-q|\leq 4;k\in\mathbb{N}} 2^{k(s-\beta)} \|\Delta_k v\|_{L^2} \sum_{k_1\leq k-2}2^{k_1(1+\frac{n}{2}-s)}
  \big(2^{k_1s}\|\Delta_{k_1} f\|_{L^2}\big) \\
  & \lesssim  \| v\|_{\dot B_{2,2}^{1+\frac{n}{2}-\beta}} \sum_{k_1\leq q+2}2^{(k_1-q)(1+\frac{n}{2}-s)}
  \big(2^{k_1s}\|\Delta_{k_1} f\|_{L^2}\big) \\
  & \lesssim c_q \|v\|_{\dot B_{2,2}^{1+\frac{n}{2}-\beta}} \|f\|_{B_{2,2}^s}.
\end{aligned}
\end{equation*}
In particular, when $n=2$ and $v=\mathcal{R}^\perp f$, using the Calder\'on-Zygmund theorem we get
\begin{equation*}
\begin{aligned}
  2^{q(s-\beta)}\|\mathrm{II}_q\|_{L^2}& \lesssim 2^{q(s-\beta)}\sum_{|k-q|\leq 4;k\in\mathbb{N}}
  \|\Delta_k v\|_{L^2} \|\nabla S_{k-1} f\|_{L^\infty} \\
  & \lesssim \sum_{|k-q|\leq 4;k\in\mathbb{N}} 2^{ks}\|\Delta_k f\|_{L^2} 2^{-k\beta}\sum_{-\infty< k_1\leq k-2} 2^{k_1\beta}
  2^{k_1(1+\frac{n}{2}-\beta)}\|\dot \Delta_{k_1}f\|_{L^2} \\
  & \lesssim c_q \|f\|_{\dot B_{2,2}^{1+\frac{n}{2}-\beta}} \|f\|_{B_{2,2}^s}.
\end{aligned}
\end{equation*}
From the divergence-free property of $v$, we further decompose $\mathrm{III}_q$ as follows
\begin{equation*}
  \mathrm{III}_q =\sum_{k\geq q-3;k\in\mathbb{N}; i}[\partial_i\Delta_q,\Delta_k v_i]\widetilde{\Delta}_k f +
  [\Delta_q, \Delta_{-1}v]\cdot\nabla\widetilde{\Delta}_{-1}f\triangleq \mathrm{III}_q^1 +\mathrm{III}_q^2.
\end{equation*}
For $\mathrm{III}^1_q$, from direct computation we find that
\begin{equation*}
\begin{aligned}
  2^{q(s-\beta)}\|\mathrm{III}^1_q\|_{L^2} & \lesssim 2^{q(s-\beta)}\Big(\sum_{k\geq q-3;k\in\mathbb{N}; i}
  \|\partial_i\Delta_q(\Delta_k v_i\widetilde{\Delta}_k
  f)\|_{L^2}+ \sum_{|k-q|\leq 2;k\in\mathbb{N}}\|\Delta_k v\cdot\nabla \Delta_q\widetilde{\Delta}_k f\|_{L^2}\Big)\\
  & \lesssim 2^{q(s-\beta)} \Big(2^{q(1+\frac{n}{2})}\sum_{k\geq q-3;k\in\mathbb{N}} \|\Delta_k v\|_{L^2}
  \|\widetilde{\Delta}_k f\|_{L^2}
  + \sum_{|k-q|\leq 2;k\in\mathbb{N}} 2^{k\frac{n}{2}}\|\Delta_k v\|_{L^2} 2^q\|\Delta_q f\|_{L^2}\Big) \\
  & \lesssim \|f\|_{ B_{2,2}^s}\sum_{k\geq q-3;k\in\mathbb{N}}2^{(q-k)(s-\beta+1+\frac{n}{2})}
  2^{k(1+\frac{n}{2}-\beta)}\|\Delta_k v\|_{L^2}
  + \|v\|_{\dot B_{2,2}^{1+\frac{n}{2}-\beta}} 2^{qs}\|\Delta_q f\|_{L^2} \\
  & \lesssim c_q \|v\|_{\dot B_{2,2}^{1+\frac{n}{2}-\beta}} \|f\|_{ B_{2,2}^s}.
\end{aligned}
\end{equation*}
For $\mathrm{III}_q^2$, due to that $\mathrm{III}_q^2=0$ for all $q\geq 3$, and similarly as estimating $\mathrm{I}_q$ we obtain
\begin{equation*}
\begin{aligned}
  2^{q(s-\beta)}\|\mathrm{III}_q^2\|_{L^2}& \lesssim 1_{q\in \{0,1,2\}} \|x h_q\|_{L^1} \|\nabla \Delta_{-1}v\|_{L^\infty}
  \|\widetilde{\Delta}_{-1}f\|_{L^2} \\
  & \lesssim 1_{q\in \{0,1,2\}} \sum_{-\infty < k_1\leq 0} 2^{k_1\beta} 2^{k_1(1+\frac{n}{2}-\beta)}\|\dot \Delta_{k'}v\|_{L^2}
  \|f\|_{L^2} \\ & \lesssim 1_{q\in \{0,1,2\}}  \|v\|_{\dot B_{2,2}^{1+\frac{n}{2}-\beta}} \|f\|_{ B_{2,2}^s}.
\end{aligned}
\end{equation*}
Gathering the upper estimates leads to the expected results.
\end{proof}

We also treat some product estimates.
\begin{lemma}\label{lem product}
  Let $v$ be a smooth divergence-free vector field over $\mathbb{R}^n$ and $f$ be a smooth scalar function of $\mathbb{R}^n$.
Then,
\begin{enumerate}
 \item if $f$ satisfies $\mathrm{supp}\widehat{f}\subset \{\xi: |\xi|\leq R \}$, a positive absolute constant $C$ can be found
 such that for every $q\in\mathbb{N}$, $\beta\in]0,\frac{n}{2}[$ and
 $s>\beta-1-\frac{n}{2}$,
 \begin{align}\label{eq prod}
   2^{q(s-\beta)} \|\Delta_q(v\cdot\nabla f)\|_{L^2} \leq C R^{1+s-\beta} c_q \|v\|_{L^2} \|f\|_{L^\infty} + C c_q \|v\|_{B_{2,2}^s}
   \| f\|_{\dot B_{2,2}^{1+\frac{n}{2}-\beta}},
 \end{align}
 with $(c_q)_{q\in\mathbb{N}}$ satisfying $\sum_{q\in\mathbb{N}}(c_q)^2\leq 1$. Especially, if $n=2$ and $v=\mathcal{R}^\perp f$,
 we further have that for all $\beta,s$ satisfying $s+1-\beta>0$,
 \begin{align}\label{eq prod1}
   2^{q(s-\beta)} \|\Delta_q(v\cdot\nabla f)\|_{L^2} \leq C R^{1+s-\beta} c_q \|f\|_{L^2} \|f\|_{L^\infty},
 \end{align}
 with $(c_q)_{q\in\mathbb{N}}$ satisfying $\sum_{q\in\mathbb{N}}(c_q)^2\leq 1$.
 \item for every $q\in \mathbb{Z}^-\cup\{0\}$, $\beta\in]0,\frac{n}{2}[$, there exists a positive absolute constant $C$ such that
 \begin{align}\label{eq prod2}
   \|\dot\Delta_q (v\cdot\nabla f)\|_{L^2}\leq C 2^{q(1+\frac{n}{2}-\beta)}  \||D|^\beta v\|_{L^2} \|f\|_{L^2},
 \end{align}
 and
 \begin{align}\label{eq prod3}
   \|\dot\Delta_q (v\cdot\nabla f)\|_{L^2}\leq C 2^{q(1+\frac{n}{2}-\beta)}  \| v\|_{L^2} \||D|^\beta f\|_{L^2}.
 \end{align}
\end{enumerate}

\end{lemma}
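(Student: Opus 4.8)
The plan is to prove both parts by Bony's paradifferential decomposition, exactly as in the proof of Lemma \ref{lem comm}, combined with the Bernstein inequalities of Lemma \ref{lem Bern} and, for the special cases, the Calder\'on--Zygmund boundedness of the Riesz transform. Throughout I would exploit the divergence-free condition $\nabla\cdot v=0$ to write
\begin{equation*}
  v\cdot\nabla f=\mathrm{div}(f\,v),
\end{equation*}
so that once a dyadic block $\Delta_q$ or $\dot\Delta_q$ is applied, the outer derivative factors out as a multiplier of size $2^q$. Decomposing the product $fv$ by Bony's formula as $T_fv+T_vf+R(f,v)$ (equivalently writing $v\cdot\nabla f=T_v\cdot\nabla f+T_{\nabla f}\cdot v+R(v,\nabla f)$), the estimates reduce to controlling the two paraproducts and the high--high remainder separately, with the Hölder pairing chosen according to the parameter ranges.

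For part (1), recall $q\in\mathbb{N}$ and $\mathrm{supp}\,\widehat f\subset\{|\xi|\le R\}$, so every dyadic block of $f$ vanishes unless $2^k\lesssim R$, and the relevant output frequencies are $\lesssim R$. The two terms on the right of \eqref{eq prod} are produced by two distinct pairings. The paraproduct $T_{\nabla f}\cdot v$, in which $v$ sits at the output frequency $\sim 2^q$, I would measure by $\|v\|_{B^s_{2,2}}$ together with $\|S_{j-1}\nabla f\|_{L^\infty}\lesssim 2^{j\beta}\|f\|_{\dot B^{1+n/2-\beta}_{2,2}}$ (Bernstein, the geometric sum converging because $\beta>0$); this yields the term $c_q\|v\|_{B^s_{2,2}}\|f\|_{\dot B^{1+n/2-\beta}_{2,2}}$. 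The paraproduct $T_v\cdot\nabla f$ and the remainder $R(v,\nabla f)$ I would pair as $L^2$-of-$v$ against $L^\infty$-of-$\nabla f$, using $\|\nabla\Delta_j f\|_{L^\infty}\lesssim 2^j\|f\|_{L^\infty}$ and $2^q\lesssim R$; summing the resulting $2^{q(1+s-\beta)}$ gives the power $R^{1+s-\beta}$ and the term $R^{1+s-\beta}c_q\|v\|_{L^2}\|f\|_{L^\infty}$, provided $1+s-\beta>0$. The point of the hypotheses is that in the complementary regime $1+s-\beta\le 0$ one automatically has $s\le\beta-1<n/2$ (since $\beta<n/2$), so that the same piece can instead be closed in the Besov scale, landing again in the second term; thus in every case the bound is dominated by the sum \eqref{eq prod}. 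For the special case $n=2$, $v=\mathcal{R}^\perp f$, the Calder\'on--Zygmund theorem turns each norm of $v$ into the corresponding norm of $f$, and since now both factors share the frequency localization $\le R$ one can always place $f$ in $L^\infty$ and $\mathcal{R}^\perp f$ in $L^2$; the condition $s+1-\beta>0$ is precisely what makes the $L^2$--$L^\infty$ pairing close, collapsing everything into the single term $R^{1+s-\beta}c_q\|f\|_{L^2}\|f\|_{L^\infty}$ of \eqref{eq prod1}.

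For part (2) I would work at low frequency, $q\in\mathbb{Z}^-\cup\{0\}$, again using $\dot\Delta_q(v\cdot\nabla f)=\dot\Delta_q\,\mathrm{div}(fv)$ to gain the factor $2^q$. Applying Bony's decomposition to $fv$, the two paraproducts $T_fv$ and $T_vf$ are controlled by Bernstein: $\|\dot S_{j-1}v\|_{L^\infty}\lesssim\sum_{k\le j}2^{k(n/2-\beta)}\||D|^\beta v\|_{L^2}$ and $\|\dot S_{j-1}f\|_{L^\infty}\lesssim 2^{jn/2}\|f\|_{L^2}$, the relevant geometric series converging because $0<\beta<n/2$ and delivering the gain $2^{q(n/2-\beta)}$. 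For the remainder $R(f,v)$ I would estimate $\dot\Delta_q$ in $L^2$ by $2^{qn/2}$ times its $L^1$ norm and, writing $\|\dot\Delta_jv\|_{L^2}=2^{-j\beta}\|\dot\Delta_j|D|^\beta v\|_{L^2}$, apply Cauchy--Schwarz together with $\beta>0$ to sum $\sum_{j\ge q-N}2^{-2j\beta}\lesssim 2^{-2q\beta}$. Combining with the prefactor $2^q$ gives $2^{q(1+n/2-\beta)}\||D|^\beta v\|_{L^2}\|f\|_{L^2}$, i.e. \eqref{eq prod2}; the twin estimate \eqref{eq prod3} then follows by the identical computation with the roles of $v$ and $f$ interchanged, placing the $\beta$-derivative on $f$.

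The step I expect to be the main obstacle is the frequency bookkeeping in part (1): one must split the three Bony pieces so that exactly the right interactions are measured in $L^\infty$ (yielding the sharp power $R^{1+s-\beta}$) while the rest stay in the natural Besov scale, and one must verify that every dyadic series is either convergent or dominated by the correct endpoint under the full ranges $\beta\in\,]0,n/2[$ and $s>\beta-1-n/2$ — in particular the high--high remainder, where the $2^q$ gain from $\mathrm{div}$ must be balanced against the sum over scales $q\lesssim\log_2R$ without losing logarithms. In the Riesz case $v=\mathcal{R}^\perp f$ the delicate point is that the Besov contributions must fold entirely into the single $L^\infty$--$L^2$ term, which relies essentially on the shared band-limited support of $f$ and $\mathcal{R}^\perp f$.
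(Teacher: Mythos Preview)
Your plan for part (2) is essentially the paper's: homogeneous Bony decomposition, Bernstein to pass $\dot S_{k-1}$ into $L^\infty$, and the $2^{q(1+n/2)}$ gain on the remainder via $L^1\to L^2$; the paper does exactly this and then says \eqref{eq prod3} follows by symmetry.

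For part (1), however, your allocation of the three Bony pieces differs from the paper's in a way that matters. You send $T_{\nabla f}\cdot v$ to the Besov term and both $T_v\cdot\nabla f$ \emph{and} the remainder $R(v,\nabla f)$ to the $R^{1+s-\beta}$ term. The paraproduct $T_v\cdot\nabla f$ (the paper's $I_q$) is fine there: frequency localization of $f$ forces $|k-q|\le 4$ with $2^k\lesssim R$, and one gets $2^{q(1+s-\beta)}\|v\|_{L^2}\|f\|_{L^\infty}$ directly. But for the remainder your $L^2\times L^\infty$ pairing gives
\[
2^{q(s-\beta)}\|III_q\|_{L^2}\lesssim 2^{q(1+s-\beta)}\|f\|_{L^\infty}\sum_{q-3\le k\lesssim\log_2 R}\|\Delta_k v\|_{L^2},
\]
and that last sum only yields $\|v\|_{L^2}$ after Cauchy--Schwarz at the cost of a factor $(\log R)^{1/2}$; this is precisely the logarithmic loss you flagged, and it does not go away. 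The paper avoids it by routing the remainder to the \emph{Besov} side instead, treating $II_q+III_q$ together exactly as $\mathrm{II}_q$ and $\mathrm{III}_q$ in Lemma~\ref{lem comm}: one uses the $2^{q(1+n/2)}$ gain from $\nabla\cdot\Delta_q$ and estimates $\|\widetilde\Delta_k f\|_{L^2}\lesssim 2^{-k(1+n/2-\beta)}\|f\|_{\dot B^{1+n/2-\beta}_{2,2}}$, which sums over $k\ge q-3$ under the hypothesis $s>\beta-1-\tfrac{n}{2}$ and lands on $c_q\|v\|_{B^{s}_{2,2}}\|f\|_{\dot B^{1+n/2-\beta}_{2,2}}$ with no logarithm.

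For the special case \eqref{eq prod1} the paper's argument is much simpler than what you propose. Since $v=\mathcal{R}^\perp f$ shares the band limitation of $f$, the product $vf$ has Fourier support in $\{|\xi|\le 2R\}$; hence $\Delta_q(v\cdot\nabla f)=\Delta_q\mathrm{div}(vf)$ vanishes unless $2^q\lesssim R$, and one estimates in one line
\[
2^{q(s-\beta)}\|\Delta_q(v\cdot\nabla f)\|_{L^2}\lesssim 1_{\{2^q\lesssim R\}}\,2^{q(1+s-\beta)}\|vf\|_{L^2}\lesssim 1_{\{2^q\lesssim R\}}\,R^{1+s-\beta}\|f\|_{L^2}\|f\|_{L^\infty},
\]
using Calder\'on--Zygmund only through $\|\mathcal{R}^\perp f\|_{L^2}\le C\|f\|_{L^2}$. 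There is no need to fold Besov contributions back into the single term.
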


\begin{proof}[Proof of Lemma \ref{lem product}]
(1) We first prove \eqref{eq prod}. Thanks to Bony's decomposition, we have
\begin{equation*}
\begin{aligned}
  \Delta_q(v\cdot\nabla f)& =\sum_{|k-q|\leq 4}\Delta_q(S_{k-1}v\cdot\nabla \Delta_k f)
  + \sum_{|k-q|\leq 4} \Delta_q(\Delta_k v\cdot\nabla
  S_{k-1}f) + \sum_{k\geq q-3} \nabla\cdot\Delta_q(\widetilde{\Delta}_k v \Delta_k f) \\
  & \triangleq I_q +II_q +III_q.
\end{aligned}
\end{equation*}
For $I_q$, from the support property of $\widehat{f}$, we have
\begin{equation*}
  2^{q(s-\beta)}\|I_q\|_{L^2} \lesssim 2^{q(s-\beta)} \sum_{|k-q|\leq 4; 2^k \lesssim R} \|S_{k-1}v\|_{L^2} 2^{k}\|\Delta_k f\|_{L^\infty}
  \lesssim R^{1+s-\beta} c_q \|v\|_{L^2} \|f\|_{L^\infty},
\end{equation*}
with $(c_q)_{q\in\mathbb{N}}$ satisfying $\sum_{q\in \mathbb{N}} (c_q)^2\leq 1$. For the other two terms,
in a similar and simpler way as the treatment of $\mathrm{II}_q$ and $\mathrm{III}_q$, we obtain that
\begin{align*}
  2^{q(s-\beta)}\|II_q+III_q\|_{L^2}\lesssim c_q \|v\|_{B_{2,2}^s}
  \|f\|_{\dot B_{2,2}^{1+\frac{n}{2}-\beta}}.
\end{align*}
Next we treat \eqref{eq prod1}.
Since $\mathrm{supp}\,\widehat{v\cdot \nabla f}\subset \{\xi: |\xi|\leq 2R\}$, we find
\begin{equation*}
\begin{aligned}
  2^{q(s-\beta)}\|\Delta_q(v\cdot\nabla f)\|_{L^2} & \lesssim 1_{\{q;\; 2^q\lesssim R\}}
  2^{q(s-\beta +1)} \|\Delta_q (v f)\|_{L^2} \\
  & \lesssim    1_{\{q;\; 2^q\lesssim R\}} R^{s-\beta +1} \|f\|_{L^2} \|f\|_{L^\infty},
\end{aligned}
\end{equation*}
and it clearly implies \eqref{eq prod1}.
\\
(2) We then prove \eqref{eq prod2}. We also have the decomposition
\begin{equation*}
\begin{aligned}
  \dot\Delta_q(v\cdot\nabla f)& =\sum_{|k-q|\leq 4}\dot\Delta_q(\dot S_{k-1}v\cdot\nabla \dot\Delta_k f) + \sum_{|k-q|\leq 4}
  \dot\Delta_q(\dot\Delta_k v\cdot\nabla
  \dot S_{k-1}f) + \sum_{k\geq q-3} \nabla\cdot\dot\Delta_q(\dot\Delta_k v \widetilde{\dot\Delta}_k f) \\
  & \triangleq \dot{I}_q +\dot{II}_q + \dot{III}_q.
\end{aligned}
\end{equation*}
For $\dot I_q$, we directly have
\begin{equation*}
\begin{aligned}
  \|\dot I_q\|_{L^2} & \lesssim  \sum_{|k-q|\leq 4} \|\dot S_{k-1} v\|_{L^\infty}  2^k \|\dot \Delta_k f\|_{L^2} \\
  & \lesssim\sum_{|k-q|\leq 4} \sum_{-\infty< k_1\leq k-2} 2^{k_1(\frac{n}{2}-\beta)} 2^{k_1\beta}\|\dot \Delta_{k_1}v\|_{L^2}
   2^k \|\dot \Delta_k f\|_{L^2} \\
  & \lesssim 2^{q(1+\frac{n}{2}-\beta)} \||D|^\beta v\|_{L^2} \|f\|_{L^2}
\end{aligned}
\end{equation*}
For $\dot{II}_q$, from Bernstein's inequality we similarly get
\begin{align*}
  \|\dot{II}_q\|_{L^2} \lesssim \sum_{|k-q|\leq 4} 2^{k\frac{n}{2}}\|\dot\Delta_k v\|_{L^2}
  2^k \|\dot S_{k-1}f\|_{L^2}  \lesssim 2^{q(1+\frac{n}{2}-\beta)} \||D|^\beta v\|_{L^2} \|f\|_{L^2}.
\end{align*}
We treat $\dot{III}_q$ as follows,
\begin{equation*}
\begin{aligned}
  \|\dot{III}_q\|_{L^2} & \lesssim  \sum_{k\geq q-3} 2^{q(1+\frac{n}{2})} \|\dot\Delta_k v\|_{L^2}
  \|\widetilde{\dot\Delta}_k f\|_{L^2} \\
  & \lesssim 2^{q(1+\frac{n}{2})} \sum_{k \geq q-3} 2^{-k\beta} 2^{k\beta}\|\dot \Delta_k v\|_{L^2} \|f\|_{L^2} \\
  & \lesssim 2^{q(1+\frac{n}{2}-\beta)} \||D|^\beta v\|_{L^2} \|f\|_{L^2}.
\end{aligned}
\end{equation*}
Collecting the upper estimates yields \eqref{eq prod2}.
The proof of \eqref{eq prod3} is almost identical to the above process, and we omit it.

\end{proof}

{\bf Acknowledgement: }  
C. Miao and L.Xue were partly supported by the NSF of China
(No.11171033).

\end{document}